	\definecolor{mygreen}{rgb}{.1,.5,.1}
	\definecolor{myblue}{rgb}{.1,.1,.5}
	\definecolor{mymagenta}{cmyk}{0,.4,0,0}
\date{18 August 2011}
\theoremstyle{plain}
\newtheorem{theorem}{Theorem}
\newtheorem{corollary}[theorem]{Corollary}
\newtheorem{lemma}[theorem]{Lemma}
\newtheorem{proposition}[theorem]{Proposition}
\theoremstyle{definition}
\newtheorem{example}[theorem]{Example} 
\theoremstyle{remark}
\newtheorem*{remark}{Remark} 
\newcommand{\demph}[1]{\textcolor{myblue}{\it #1}}
\def\id{\mathrm{id}}
\newcommand{\coker}{\mathrm{coker}}
\newcommand{\hker}{\mathrm{Hker}}
\newcommand{\lker}{\mathrm{Lker}}
\newcommand{\eq}{\mathrm{Eq}}
\DeclareMathOperator{\supp}{supp}
\newcommand{\abs}[1]{\lvert#1\rvert} 
\def\field{\Bbbk}
\let\onto=\twoheadrightarrow
\let\map=\xrightarrow
\newcommand{\exponential}[2]{\mathcal E_{#1}(#2)}
\newcommand{\type}[2]{\mathcal T_{#1}(#2)}
\newcommand{\cycle}[2]{\mathcal Z_{#1}(#2)}
\newcommand{\poincare}[2]{\mathcal O_{#1}(#2)} 
\newcommand{\tone}{\mathbf{1}}
\newcommand{\rH}{\mathrm{H}}
\newcommand{\bh}{\mathbf h}
\newcommand{\bl}{\mathbf L}
\newcommand{\be}{\mathbf E}
\newcommand{\bk}{\mathbf k}
\newcommand{\bc}{\mathbf c}
\newcommand{\bp}{\mathbf p}
\newcommand{\bq}{\mathbf q}
\newcommand{\bmm}{\mathbf m}
\newcommand{\bu}{\mathbf u}
\newcommand{\bv}{\mathbf v}
\newcommand{\bX}{\mathbf{X}}
\newcommand{\bLie}{\mathbf{Lie}}
\newcommand{\bel}{\mathbf{e}} 
\newcommand{\Pc}{\mathcal{P}} 
\newcommand{\Sc}{\mathcal{S}} 
\newcommand{\Uc}{\mathcal{U}} 
\newcommand{\bN}{\mathbb{N}}
\newcommand{\bQ}{\mathbb{Q}}
\newcommand{\bZ}{\mathbb{Z}}
\newcommand{\K}{\mathcal K}
\newcommand{\Kb}{\overline{\K}}
\newcommand{\sym}{\textsl{Sym}}
\newcommand{\qsym}{\textsl{QSym}}
\newcommand{\bdot}{\bm\cdot}
\def\pmrg{{\textcolor{mymagenta}{.}}}
\def\cmrg{{\textcolor{mymagenta}{|}}}
\def\llb{{[\![}}
\def\rrb{{]\!]}}
\newcommand{\qand}{\quad\text{and}\quad}
\newcommand{\djcup}{\sqcup}
\author{Marcelo Aguiar}
\address[Aguiar]{
	Department of Mathematics\\
        Texas A\&M University\\
        College Station, TX\, 77843 
        }
\email{maguiar@math.tamu.edu}
\urladdr{http://www.math.tamu.edu/$\small\sim$maguiar}
\thanks{Aguiar supported in part by NSF grant DMS-1001935.}
\author{Aaron Lauve}
\address[Lauve]{
	Department of Mathematics and Statistics \\
	Loyola University Chicago \\
	Chicago, IL\, 60660 
        }
\email{lauve@math.luc.edu}
\urladdr{http://www.math.luc.edu/$\small\sim$lauve}
\thanks{Lauve supported in part by NSA grant H98230-11-1-0185.}
\title{{L}agrange's Theorem for {H}opf Monoids in Species}
\keywords{Hopf monoids, species, graded Hopf algebras, Lagrange's theorem, 
generating series, Poincar\'e-Birkhoff-Witt theorem, Hopf kernel, Lie kernel, 
primitive element, partition, composition, linear order, cyclic order, derangement} 
\subjclass[2010]{05A15; 05A20; 05E99; 16T05; 16T30; 18D10; 18D35}
\begin{document}

\begin{abstract}
Following Radford's proof of Lagrange's theorem for pointed Hopf algebras,
we prove Lagrange's theorem for Hopf monoids in the category of connected species. 
As a corollary, we obtain necessary conditions for a given subspecies $\bk$ of a Hopf monoid $\bh$ to be a Hopf submonoid: the quotient of any one of the generating series of $\bh$ by the corresponding generating series of $\bk$ must have nonnegative coefficients. Other corollaries include a necessary condition for a sequence of nonnegative integers to be the
dimension sequence of a Hopf monoid 
in the form of certain polynomial inequalities, and of
a set-theoretic Hopf monoid in the form of certain linear inequalities.
The latter express that the binomial transform of the sequence must be nonnegative. 
\end{abstract}

\maketitle

\section*{Introduction}

Lagrange's theorem states that for any subgroup $K$ of a group $H$, 
$H\cong K\times Q$ as (left) $K$-sets, where $Q=H/K$. In particular,
if $H$ is finite, then $|K|$ divides $|H|$. 
Passing to group algebras over a field $\field$, we have that 
$\field H \cong \field K \otimes \field Q$ as (left) $\field K$-modules, or that $\field H$ is free as a $\field K$-module.  Kaplansky~\cite{Kap:1975} conjectured that the same statement holds for Hopf algebras---group algebras being principal examples. 
It turns out that the result does not hold in general, as shown by Oberst and Schneider~\cite[Proposition~10]{ObeSch:1974} and \cite[Example~3.5.2]{Mon:1993}.
On the other hand, the result does hold for certain large classes of Hopf algebras,
including the finite dimensional ones by a theorem of
Nichols and Zoeller~\cite{NicZoe:1989}, and the pointed ones by a theorem of Radford~\cite{Rad:1977}. Further (and finer) results of this nature were developed by Schneider~\cite{Sch:1990,Sch:1992}. Additional work on the conjecture includes that of
Masuoka~\cite{Mas:1992} and Takeuchi~\cite{Tak:1979}; more information can be
found in Sommerh\"auser's survey~\cite{Som:2000}.

The main result of this paper (Theorem~\ref{t:main}) is a version of Lagrange's theorem for Hopf monoids in the category of connected species: if $\bh$ is a connected Hopf monoid
and $\bk$ is a Hopf submonoid, there exists a species $\bq$ such that $\bh=\bk\bdot\bq$.
 An immediate application is a test for Hopf submonoids (Corollary~\ref{c:Sp series}): if any one of the generating series for a species $\bk$ does not divide in $\bQ_{\geq 0}\llb x\rrb$ the corresponding generating series for the Hopf monoid $\bh$ (in the sense that the quotient has at least one negative coefficient), then $\bk$ is not a Hopf submonoid of $\bh$. A similar test also holds for connected graded Hopf algebras (Corollary~\ref{c:cgVec series}). The proof of Theorem~\ref{t:main}
for Hopf monoids in species parallels Radford's proof for Hopf algebras.
(Hopf algebras are Hopf monoids in the category of vector spaces).

The paper is organized as follows. In Section~\ref{s:Hopf algebras}, 
we recall Lagrange's theorem for Hopf algebras,
focusing on the case of connected graded Hopf algebras.
In Section~\ref{s:Hopf monoids},
we recall the basics of Hopf monoids in species and prove Lagrange's theorem in
this setting. Examples and applications are given in Section~\ref{s:applications}. 
Among these, we derive certain polynomial inequalities that a sequence of nonnegative integers must satisfy in order to be the dimension sequence of a connected Hopf monoid in species. 
In the case of a set-theoretic Hopf monoid structure, we obtain additional necessary conditions in the form of linear inequalities which express that the binomial transform of the enumerating sequence must be nonnegative. 
In Section~\ref{s:dimensions} we provide information on the growth and support of the dimension sequence of a connected Hopf monoid. The latter must be an additive submonoid of the natural numbers.
We conclude in Section~\ref{s:kernels}
with information on the species $\bq$ entering in Lagrange's theorem. In the dual setting,
$\bq$ is the Hopf kernel of a morphism, and for cocommutative Hopf monoids it can
be described in terms of Lie kernels and primitive elements via the Poincar\'e-Birkhoff-Witt
theorem.

All vector spaces are over a fixed field $\field$ of characteristic $0$, except in Section~\ref{s:dimensions}, where the characteristic is arbitrary.

\section{Lagrange's theorem for Hopf algebras}\label{s:Hopf algebras}
We begin by recalling a couple of
versions of this theorem. 

\begin{theorem} Let $H$ be a finite dimensional Hopf algebra over a field $\field$. If $K\subseteq H$ is any Hopf subalgebra, then $H$ is a free left (and right) $K$-module.
\end{theorem}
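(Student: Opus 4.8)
This is the Nichols--Zoeller theorem~\cite{NicZoe:1989}; a genuine proof goes well beyond formal manipulation, and the plan is to follow their argument. I begin with two reductions. Freeness of a finitely generated module descends along a faithfully flat extension of the ground field, so one may assume $\field$ algebraically closed. The ``right $K$-module'' assertion then follows from the ``left'' one: the antipode of $H$ is an algebra anti-automorphism carrying the Hopf subalgebra $K$ isomorphically onto a Hopf subalgebra of $H^{\mathrm{op}}$, so it turns a left $K$-basis of $H$ into a right $K$-basis. Thus it suffices to show that $H$ is free as a left $K$-module.

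The argument rests on two structural inputs. First, by the Larson--Sweedler theorem every finite-dimensional Hopf algebra is a Frobenius algebra; in particular $H$ and $K$ are self-injective, so over each of them ``projective'' coincides with ``injective'', there are only finitely many indecomposable projectives (one for each simple module), and a finite-dimensional module is free precisely when it is projective and, in its decomposition into indecomposable projectives, every summand $P_i$ occurs with multiplicity proportional (by a common factor) to its multiplicity $\dim\mathrm{top}(P_i)$ in the regular module. Second, the tensor identity: for any finite-dimensional left $H$-module $M$, the map $a\otimes m\mapsto\sum a_{(1)}\otimes a_{(2)}m$ is an isomorphism of left $H$-modules from $H\otimes M$ with the diagonal action onto $H\otimes M$ with the left-regular action on the first tensorand alone, whose inverse is built from the antipode; hence $H\otimes M\cong H^{\oplus\dim_\field M}$ is $H$-free. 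The same statement holds verbatim with $K$ in place of $H$, and relatedly the tensor product of a projective $K$-module with an arbitrary $K$-module is again projective.

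With these in place the proof has two stages. In the first, one shows that $H$ is projective as a left $K$-module, using the self-injectivity of $K$ together with the Hopf-algebra structure on $H$ (equivalently, the $H^*$-comodule structure it carries) to realize $H|_K$ as a direct summand of a free $K$-module. In the second and harder stage one upgrades ``projective'' to ``free'': writing $H|_K\cong\bigoplus_i a_iP_i$ with the $P_i$ the indecomposable projective $K$-modules, one must show that $a_i/\dim\mathrm{top}(P_i)$ is independent of $i$. This is achieved by a delicate multiplicity count: comparing $H|_K$ with the regular $K$-module through the tensor identity for $K$ applied to well-chosen modules, and tracking an indecomposable summand of $H|_K$ of largest dimension, one forces that summand to be isomorphic to $K$ and then propagates the resulting balance to all the multiplicities. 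This balancing step is the main obstacle; by contrast the reductions and the Frobenius-and-antipode bookkeeping surrounding it are routine.
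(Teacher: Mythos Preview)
The paper does not prove this statement: it cites Nichols and Zoeller~\cite{NicZoe:1989} (and~\cite[Theorem~3.1.5]{Mon:1993}), explicitly notes that the result will not be used directly, and then turns to Radford's theorem for pointed Hopf algebras and the connected graded case, which are what is actually needed later. There is thus no proof in the paper for yours to be compared against.

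Your proposal goes well beyond the paper by outlining the genuine Nichols--Zoeller argument. The architecture you give is the right one: reduction to an algebraically closed base field and to the left-module statement via the bijective antipode; the Larson--Sweedler Frobenius structure on $K$ (hence self-injectivity and the identification of projectives with injectives); the tensor identity $H\otimes M\cong H^{\dim M}$; and the two-stage plan of first establishing projectivity of $H|_K$ and then upgrading to freeness by a multiplicity count on indecomposable projectives. One caveat on your description of the second stage: the phrase ``one forces that summand to be isomorphic to $K$'' is not accurate as written, since when $K$ is not local the regular module $K$ is not indecomposable, so no indecomposable summand of $H|_K$ can equal $K$. The actual balancing compares the multiplicities in $H|_K\cong\bigoplus_i n_iP_i$ with those in $K\cong\bigoplus_i d_iP_i$ (where $d_i=\dim\mathrm{top}(P_i)$) and uses the tensor identity together with Krull--Schmidt and an extremal argument on dimensions to force the ratio $n_i/d_i$ to be independent of $i$; the conclusion is about multiplicity ratios, not about a single summand being all of $K$. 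You are right, however, that this is where the real content lies, and your sketch correctly signals its shape.
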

This is the Nichols-Zoeller theorem \cite{NicZoe:1989}; see also \cite[Theorem~3.1.5]{Mon:1993}. We will not make direct use of this result, but instead of the related results discussed below.

A Hopf algebra $H$ is \demph{pointed} if all its simple subcoalgebras are $1$-dimensional. Equivalently, the group-like elements 
of $H$ linearly span the coradical of $H$.

Given a subspace $K$ of $H$, set 
\[
K_+:=K\cap\ker(\epsilon),
\]
where $\epsilon:H\to\field$ is the counit of $H$. Let $K_+H$ denote the right $H$-ideal
generated by $K_+$.

\begin{theorem}\label{t:pointed} 
Let $H$ be a pointed Hopf algebra. If $K\subseteq H$ is any Hopf subalgebra, then $H$ is a free left (and right) $K$-module. Moreover,
\[
H\cong K\otimes (H/K_+H)
\]
as left $K$-modules.
\end{theorem}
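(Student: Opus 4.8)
The plan is to imitate Radford's argument, with the coradical filtration as the essential tool. The first step is to record the structural consequences of pointedness. Since $H$ is pointed its coradical is the group algebra $H_0=\field G$, where $G=G(H)$, and the coradical filtration $H_0\subseteq H_1\subseteq\cdots$ is an exhaustive bialgebra filtration: $H_mH_n\subseteq H_{m+n}$ and $\Delta H_n\subseteq\sum_{i+j=n}H_i\otimes H_j$. A subcoalgebra of a pointed coalgebra is pointed, and its coradical filtration is the induced one, so $K$ is pointed with $K_n=K\cap H_n$; in particular $G(K)=G\cap K$ and $K_0=\field G(K)\subseteq\field G$.

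Next I would introduce the candidate complement. Set $Q:=H/K_+H$ with quotient map $\pi\colon H\twoheadrightarrow Q$. One verifies that $K_+H$ is at once a right ideal (by construction), a left $K$-submodule of $H$ (because $K_+$ is a two-sided ideal of $K$), and a coideal of $H$ — the last point using $\Delta(k)-k\otimes 1\in K\otimes K_+$ for $k\in K_+$. Hence $Q$ is a quotient coalgebra, $\pi$ is a morphism of coalgebras, and $Q$ inherits a left $K$-action. Equipping $H$ with the right $Q$-coaction $\rho:=(\id\otimes\pi)\circ\Delta$, the identity $\pi(kh)=\epsilon(k)\,\pi(h)$ for $k\in K$ gives $\rho(kh)=k\cdot\rho(h)$; thus $H$ is a left $K$-module and right $Q$-comodule in a compatible way, i.e. a relative Hopf module for the datum $K\subseteq H\twoheadrightarrow Q$.

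The heart of the proof is the trivialization $H\cong K\otimes Q$. The coalgebra $Q$, being a quotient of the pointed coalgebra $H$, is pointed, with coradical filtration $Q_0\subseteq Q_1\subseteq\cdots$. I would construct a $Q$-colinear section $s\colon Q\to H$ of $\pi$ with $s(1)=1$ by induction along this filtration: given a colinear section on $Q_{n-1}$, one extends it over $Q_n$, the obstruction to colinearity being controlled by lower filtration stages together with the surjectivity of $\pi$. Then the comparison map $K\otimes Q\to H$, $k\otimes q\mapsto k\,s(q)$, is a morphism both of left $K$-modules and of right $Q$-comodules, and composing it with $\pi$ recovers $\epsilon\otimes\id_Q$; filtering $K\otimes Q$ by $K\otimes Q_n$ one checks it is unipotent, hence bijective. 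This exhibits $H\cong K\otimes Q$ as left $K$-modules, so $H$ is a free left $K$-module, and $Q=H/K_+H$ by definition gives the displayed isomorphism. The right-handed statement is the mirror image, run with $HK_+$ in place of $K_+H$.

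I expect the inductive construction of the colinear section $s$, together with the verification that the comparison map is bijective, to be the main obstacle: this is precisely where pointedness cannot be dispensed with (by Oberst--Schneider a general Hopf subalgebra inclusion need not admit such a splitting), and the section must be built so as to respect the coalgebra and module structures simultaneously. A slightly different organization of the same bookkeeping is available: pass to the associated graded bialgebras $\mathrm{gr}\,K\subseteq\mathrm{gr}\,H$ of the coradical filtrations, prove freeness there in the coradically graded case, and lift a homogeneous basis back to $H$; I would keep this in reserve should the filtered computations become unwieldy.
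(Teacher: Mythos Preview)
The paper does not itself prove this theorem; it attributes the freeness statement to Radford and the identification of the complement as $H/K_+H$ to Schneider. What the paper does contain is Radford's argument transported to the species setting (the proof of Theorem~\ref{t:main}), and that argument is organized rather differently from your sketch---despite your opening line, what you outline is closer to Schneider's approach than to Radford's.

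Radford's route, as reproduced in the paper, does not pass through the quotient coalgebra $Q=H/K_+H$ at all. Instead one builds an exhaustive filtration $K=K^{(0)}\subseteq K^{(1)}\subseteq\cdots$ of $H$ by iterated wedges $K^{(n)}:=K\wedge K^{(n-1)}$, shows that each $K^{(n)}$ is both a subcoalgebra and a left $K$-submodule, and proves that each successive quotient $K^{(n)}/K^{(n-1)}$ is a left $K$-Hopf module. The Larson--Sweedler fundamental theorem of Hopf modules then makes each such quotient free over $K$, and one splices the pieces together inductively. Pointedness enters only at the end, to ensure the $K^{(n)}$ exhaust $H$ (via the containment $H_{(n)}\subseteq K^{(n)}$ of the coradical filtration). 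The identification of the resulting abstract complement with $H/K_+H$ is a short afterthought, exactly as in the paper's proof of Theorem~\ref{t:connected}.

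Your approach---form $Q=H/K_+H$ first, regard $H$ as a $(K,Q)$-relative Hopf module, and construct a $Q$-colinear section of $\pi$ by induction on the coradical filtration of $Q$---has the advantage that the complement is the desired one from the outset. Radford's approach, by contrast, isolates a single clean application of the fundamental theorem of Hopf modules to each filtration quotient and avoids the delicate construction of a colinear section altogether. Your step ``unipotent, hence bijective'' is where the real work hides: in particular one must know that the $Q$-coinvariants of $H$ coincide with $K$, which is not automatic and is itself part of what Schneider establishes.
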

The first statement is due to Radford \cite[Section~4]{Rad:1977} and the second (stronger) statement to Schneider \cite[Remark~4.14]{Sch:1990}, \cite[Corollary~4.3]{Sch:1992}.
Various generalizations can be found in these references as well as in 
Masuoka~\cite{Mas:1992} and Takeuchi \cite{Tak:1979}; see also Sommerha\"user~\cite{Som:2000}.
 We are interested in the particular variant given in Theorem~\ref{t:connected} below.

A Hopf algebra $H$ is \demph{graded} if there is given a decomposition
\[
H=\bigoplus_{n\geq0} H_n
\]
into linear subspaces that is preserved by all operations.  It is \demph{connected graded} if
in addition $H_0$ is linearly spanned by the unit element.  

\begin{theorem}\label{t:connected} 
Let $H$ be a connected graded Hopf algebra. If $K\subseteq H$ is a graded Hopf subalgebra, then $H$ is a free left (and right) $K$-module. Moreover,
\[
H\cong K\otimes (H/K_+H)
\]
as left $K$-modules and as graded vector spaces.
\end{theorem}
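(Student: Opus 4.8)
The plan is to deduce this from Theorem~\ref{t:pointed} by checking that a connected graded Hopf algebra is pointed, and then upgrading the isomorphism to respect the grading. First I would observe that for a connected graded Hopf algebra $H$, the coradical is contained in $H_0 = \field 1$: indeed, any simple subcoalgebra $C$, being finite dimensional, must by a standard argument lie in low degrees, and connectedness forces $C = \field 1$, which is one-dimensional and spanned by the group-like element $1$. Hence $H$ is pointed, and Theorem~\ref{t:pointed} applies: $H$ is free as a left (and right) $K$-module, and $H \cong K \otimes (H/K_+H)$ as left $K$-modules. It remains to see that this isomorphism can be chosen graded.

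Next I would set $Q := H/K_+H$, which inherits a grading since $K_+H$ is a graded ideal ($K$ being a graded Hopf subalgebra, $K_+$ is homogeneous away from degree $0$). The projection $\pi\colon H \onto Q$ is a graded map of left $K$-modules. The key step is to construct a graded $K$-linear section, or equivalently a graded free basis of $H$ over $K$ lifting a homogeneous basis of $Q$: pick a homogeneous $\field$-linear basis $\{\bar q_i\}$ of $Q$, lift each $\bar q_i$ to a homogeneous element $q_i \in H$ of the same degree (possible since $\pi$ is graded and surjective), and let $\phi\colon K \otimes Q \to H$ be the $K$-linear map sending $1 \otimes \bar q_i \mapsto q_i$. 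This is manifestly a morphism of graded vector spaces and of left $K$-modules. One then checks $\phi$ is an isomorphism: surjectivity and injectivity follow because $H$ is already known to be free over $K$ of the right ``size'' in each degree — concretely, comparing Hilbert series, $\dim H_n = \sum_k \dim K_k \cdot \dim Q_{n-k}$ by Theorem~\ref{t:pointed}, so it suffices to show $\phi$ is surjective in each degree, which follows by induction on $n$ using $\pi\phi = \id_Q$ together with $\ker\pi = K_+ H$ and a degree count. (Alternatively, invoke the graded Nakayama lemma: $\bar q_i$ generate $Q = H/K_+H$, so the $q_i$ generate $H$ as a $K$-module, giving surjectivity of $\phi$; freeness of $H$ over $K$ then forces injectivity.)

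The main obstacle I anticipate is not any single hard estimate but rather being careful that all the structure maps genuinely preserve the grading: that $K_+H$ is a \emph{graded} right ideal, that $Q$ is a graded coalgebra quotient, and that the section lifting homogeneous basis elements can be taken degree-preserving. None of these is deep in the connected graded setting, but they are exactly the points where the argument differs from the merely-pointed case of Theorem~\ref{t:pointed}. A cleaner route, which I would actually prefer to write up, bypasses Theorem~\ref{t:pointed} entirely: since $H$ is connected graded, both $H$ and $K$ have Hilbert series with constant term $1$, so the quotient series $\sum \dim H_n x^n \big/ \sum \dim K_n x^n$ is a well-defined power series; one shows directly that the multiplication map $K \otimes Q \to H$ (for $Q$ a graded complement built as above) is an isomorphism by an induction on degree, using that in a connected graded Hopf algebra $K_+$ generates a right ideal whose degree-$n$ component is spanned by products $k \cdot h$ with $\deg k \geq 1$. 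This self-contained inductive argument is the template that will be reused for Hopf monoids in species in Section~\ref{s:Hopf monoids}.
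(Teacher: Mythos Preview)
Your approach is correct in outline but differs from the paper's, and has one soft spot.

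The paper runs the argument the other way around: it notes that Radford's wedge-filtration proof, when carried out with graded objects throughout, already produces a \emph{graded} vector space $Q$ with $H \cong K \otimes Q$ as graded left $K$-modules (the details are deferred to the parallel species argument for Theorem~\ref{t:main}), and then identifies this abstract $Q$ with $H/K_+H$ a posteriori by checking directly that $\varphi(K_+\otimes Q) = K_+H$ and using the splitting $K = K_0 \oplus K_+$. You instead fix $Q = H/K_+H$ from the start, lift a homogeneous basis, and build the graded map $\phi: K\otimes Q \to H$. Your route is arguably tidier in that it treats Theorem~\ref{t:pointed} as a black box rather than inspecting Radford's construction; the paper's route has the advantage that the same filtration argument is what gets reused verbatim for species.

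The soft spot: your primary justification that $\phi$ is an isomorphism invokes the Hilbert-series identity $\dim H_n = \sum_k \dim K_k \dim Q_{n-k}$ ``by Theorem~\ref{t:pointed}'', but that theorem only furnishes an \emph{ungraded} $K$-module isomorphism $H \cong K \otimes (H/K_+H)$, so the degree-by-degree dimension count does not follow from it. Your parenthetical alternative is the correct argument and should be promoted to the main line: graded Nakayama gives surjectivity of $\phi$; for injectivity, ungraded freeness of $H$ over $K$ makes the sequence $0 \to \ker\phi \to K\otimes Q \to H \to 0$ split, whence $(\ker\phi)/K_+(\ker\phi) = 0$, and since $\ker\phi$ is graded (as the kernel of a graded map) a second application of graded Nakayama yields $\ker\phi = 0$.
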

\begin{proof}
Since $H$ is connected graded, its coradical is $H_0 = \field$, so $H$ is pointed and Theorem~\ref{t:pointed} applies. Radford's proof shows that there exists a graded
vector space $Q$ such that
\[
H \cong K \otimes Q
\]
as left $K$-modules and as graded vector spaces. (The argument we give in the parallel setting of Theorem~\ref{t:main} makes this clear.)
Note that $K_{+\!}=\bigoplus_{n\geq1} K_n$, hence $K_+H$ and $H/K_+H$ inherit the grading of $H$.
To complete the proof, it suffices to show that $Q\cong H/K_+H$ as graded vector spaces. 

Let $\varphi:K\otimes Q\to H$ be an isomorphism of left $K$-modules and of graded vector spaces. We claim that
\[
\varphi(K_+\otimes Q)=K_+H.
\]
In fact, since $\varphi$ is a morphism of left $K$-modules,
\[
\varphi(K_+\otimes Q) = K_+\varphi(1\otimes Q)\subseteq K_+H.
\]
Conversely, if $k\in K_+$ and $h\in H$, writing $h=\sum_i \varphi(k_i\otimes q_i)$ with $k_i\in K$ and $q_i\in Q$, we obtain
\[
kh=\sum_i \varphi(kk_i\otimes q_i)\in \varphi(K_+\otimes Q),
\]
since $K_+$ is an ideal of $K$. 

Now, since $K=K_0\oplus K_+$, we have
\[
K\otimes Q =(K_0\otimes Q)\oplus (K_+\otimes Q)
\]
and therefore
\[
H/K_+H = \varphi(K\otimes Q)/\varphi(K_+\otimes Q) \cong \varphi (K_0\otimes Q)\cong Q
\]
as graded vector spaces.
\end{proof}

Given a graded Hopf algebra $H$, let $\poincare{H}{x}\in \bN\llb x\rrb$ denote its
 \demph{Poincar\'e series}---the ordinary generating function for the sequence of dimensions of its graded components,
\begin{gather*}
	\poincare{H}{x} := \sum_{n\geq0} \dim H_n \, x^n .
\end{gather*} 
Suppose $H$ is connected graded and $K$ is a graded Hopf subalgebra.
In this case, their Poincar\'e series are of the form
\[
1+a_1x+a_2x^2+\cdots
\]
with $a_i\in\bN$ and the quotient $\poincare{H}{x}/\poincare{K}{x}$ is a well-defined power series in $\bZ\llb x\rrb$.

\begin{corollary}\label{c:cgVec series} 
Let $H$ be a connected graded Hopf algebra. If $K\subseteq H$ is any graded Hopf subalgebra, then the quotient $\poincare{H}{x} / \poincare{K}{x}$ of Poincar\'e series is nonnegative, i.e., belongs to $\bN\llb x\rrb$.
\end{corollary}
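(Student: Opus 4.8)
The plan is to read the corollary straight off the structural decomposition in Theorem~\ref{t:connected}. That theorem provides an isomorphism
\[
H \cong K \otimes (H/K_+H)
\]
of graded vector spaces, where $H/K_+H$ carries the grading it inherits from $H$ (as noted in the proof of Theorem~\ref{t:connected}, $K_+=\bigoplus_{n\geq1}K_n$, so $K_+H$ is a graded subspace). The first step is simply to invoke this; the bulk of the work has already been done.

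The second step is to pass to Poincar\'e series. For graded vector spaces with finite-dimensional homogeneous components the Poincar\'e series is multiplicative under tensor product, since $(V\otimes W)_n=\bigoplus_{i+j=n}V_i\otimes W_j$. Observing that $H/K_+H$, being a quotient of $H$, has finite-dimensional homogeneous components, we obtain
\[
\poincare{H}{x}=\poincare{K}{x}\cdot\poincare{H/K_+H}{x}.
\]
Since $K$ is connected graded, $\poincare{K}{x}$ has constant term $1$ and is hence a unit in $\bZ\llb x\rrb$; dividing yields $\poincare{H}{x}/\poincare{K}{x}=\poincare{H/K_+H}{x}$. The right-hand side is the generating function for the sequence of (finite, nonnegative) dimensions $\dim(H/K_+H)_n$, so it lies in $\bN\llb x\rrb$, which is precisely the assertion.

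There is essentially no obstacle beyond correctly citing Theorem~\ref{t:connected}: the entire content is packaged there, and the remaining steps are formal. The one point deserving a moment's care is the bookkeeping that the isomorphism of Theorem~\ref{t:connected} is grading-preserving — part of its statement — so that the displayed identity is an identity of power series and not merely of total dimensions. Equivalently, one could compress the whole proof to a single sentence: the quotient $\poincare{H}{x}/\poincare{K}{x}$ equals the Poincar\'e series of $H/K_+H$, and is therefore nonnegative.
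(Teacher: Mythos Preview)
Your proof is correct and follows exactly the paper's approach: invoke Theorem~\ref{t:connected} to get $H\cong K\otimes Q$ as graded vector spaces (with $Q=H/K_+H$), then pass to Poincar\'e series using multiplicativity. The paper's proof is just a terser version of what you wrote.
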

\begin{proof}
By Theorem~\ref{t:connected}, $H \cong K \otimes Q$ as graded vector spaces, where
$Q=H/K_+H$. Hence $\poincare{H}{x} = \poincare{K}{x} \, \poincare{Q}{x}$ and the result follows.
\end{proof}

\begin{example}
Consider the Hopf algebra $\qsym$ of quasisymmetric functions in countably many variables, and the Hopf subalgebra $\sym$ of symmetric functions. 
They are connected graded, so by Theorem~\ref{t:connected}, $\qsym$ is a free module over $\sym$. Garsia and Wallach
prove this same fact for the algebras $\qsym_n$ and $\sym_n$ of (quasi) symmetric functions in $n$ variables (where $n$ is a finite number)~\cite{GarWal:2003}. 
While $\qsym_n$ and $\sym_n$
are quotient algebras of $\qsym$ and $\sym$, they are not quotient coalgebras.
Since a Hopf algebra structure is needed in order to apply 
Theorem~\ref{t:connected}, we cannot derive the result of Garsia and Wallach in this manner.
The papers~\cite{GarWal:2003} and~\cite{LauMas:2011} provide information on the
space $Q_n$ entering in the decomposition $\qsym_n\cong\sym_n\otimes Q_n$.
\end{example}


\section{Lagrange's theorem for Hopf monoids in species}\label{s:Hopf monoids}

We first review the notion of Hopf monoid in the category of species, following \cite{AguMah:2010}, and then prove Lagrange's theorem in this setting.
We restrict attention to the case of connected Hopf monoids.

\subsection{Hopf monoids in species}\label{ss:species}

The notion of species was introduced by Joyal 
\cite{Joy:1981}. It formalizes the notion of combinatorial structure
and provides a framework for studying the generating functions which enumerate these structures. The book \cite{BerLabLer:1998} by Bergeron, Labelle and Leroux expounds the theory of set species.

Joyal's work indicates that species may also be regarded as algebraic objects; this is the point of view adopted in
\cite{AguMah:2010} and in this work. To this end, it is convenient to work with vector species.

A \demph{(vector) species} is a functor $\bq$ from {finite sets} and bijections to {vector spaces} and linear maps. Specifically, it is a family of vector spaces $\bq[I]$, one for each finite set $I$, together with linear maps $\bq[\sigma]: \bq[I] \to \bq[J]$, one for each bijection $\sigma:I\to J$, satisfying 
\[
	 \bq[\id_I] = \id_{\bq[I]} \quad\hbox{and}\quad 
	 \bq[\sigma\circ\tau] = \bq[\sigma]\circ \bq[\tau]
\]
whenever $\sigma$ and $\tau$ are composable bijections.

The notation $\bq[a,b,c,\ldots]$ is shorthand for $\bq[\{a,b,c,\ldots\}]$ and
$\bq[n]$ is shorthand for $\bq[1,2,\ldots,n]$. The space $\bq[n]$ is an $S_n$-module
via $\sigma\cdot v = \bq[\sigma](v)$ for $v\in\bq[n]$ and $\sigma\in S_n$.

A species $\bq$ is \demph{finite dimensional} if each vector space $\bq[I]$ is finite dimensional. 
In this paper, all species are finite dimensional. 

A morphism of species is a natural transformation of functors. Let ${\sf Sp}$
denote the category of (finite dimensional) species.

We give two elementary examples that will be useful later. 

\begin{example}\label{ex: example species}
Let $\be$ be the \demph{exponential species}, defined by $\be[I] = \field\{\ast_I\}$ for all $I$. 
The symbol $\ast_I$ denotes an element canonically associated to the set $I$ (for definiteness, we may take $\ast_I=I$). Thus, $\be[I]$ is a $1$-dimensional space 
with a distinguished basis element. A richer example is provided by the species $\bl$ of \demph{linear orders}, defined by  $\bl[I] = \field\{\hbox{linear orders on }I\}$ for all $I$ (a space of dimension $n!$ when $|I|=n$).
\end{example}

We use $\bdot$ to denote the \demph{Cauchy product} of two species. Specifically, 
\[
\bigl(\bp \bdot \bq\bigr)[I] := \bigoplus_{S\djcup T = I} \bp[S] \otimes \bq[T]
\quad\hbox{for all finite sets }I.
\]
The notation $S\djcup T = I$ indicates that $S\cup T = I$ and $S \cap T = \emptyset$.
The sum runs over all such \demph{ordered decompositions} of $I$, or equivalently over all
subsets $S$ of $I$: there is one term for $S\djcup T$ and another for $T\djcup S$. 
The Cauchy product turns ${\sf Sp}$ into a symmetric monoidal category. 
The braiding simply switches the tensor factors.
The unit object
is the species $\tone$ defined by
\[
\tone[I]  := \begin{cases}
\field & \text{if $I$ is empty,} \\
0 & \text{otherwise.}
\end{cases}
\]

A \demph{monoid} in the category $({\sf Sp},\bdot)$ is a species $\bmm$ together with a morphism of species $\mu: \bmm\bdot\bmm \to \bmm$, i.e., a family of maps 
\[
	\mu_{S,T} : \bmm[S] \otimes \bmm[T] \to \bmm[I],
\]
one for each ordered decomposition $I = S\djcup T$,  satisfying
appropriate associativity and unital conditions, and naturality with respect to bijections.
Briefly, to each $\bmm$-structure on $S$ and $\bmm$-structure on $T$, there is assigned an $\bmm$-structure on $S\djcup T$. 
The analogous object in the category of graded vector spaces is a graded algebra.

For the species $\be$, a monoid structure is defined by sending the basis element $\ast_S \otimes \ast_T$ to the basis element $\ast_I$. For $\bl$, a monoid structure is provided by concatenation of linear orders: $\mu_{S,T}(\ell_1 \otimes \ell_2) = (\ell_1, \ell_2)$. 

A \demph{comonoid} in the category $({\sf Sp},\bdot)$ is a species $\bc$ together with a morphism of species $\Delta:\bc \to \bc\bdot\bc$, i.e., a family of maps 
\[
	\Delta_{S,T} : \bc[I] \to \bc[S] \otimes \bc[T],
\]
one for each ordered decomposition $I=S\djcup T$, which are natural, coassociative and counital. 

For the species $\be$, a comonoid structure is defined by sending the basis vector $\ast_I$ to the basis vector $\ast_S \otimes \ast_T$. For $\bl$, a comonoid structure is provided by restricting a total order $\ell$ on $I$ to total orders on $S$ and $T$: $\Delta_{S,T}(\ell) = \ell\vert_S\otimes \ell\vert_T$. 

We assume that our species $\bq$ are \demph{connected}, i.e., $\bq[\emptyset]=\field$. 
In this case, the (co)unital conditions for a (co)monoid force the maps
$\mu_{S,T}$ ($\Delta_{S,T}$) to be the canonical identifications if either $S$ or $T$ is empty. Thus, in defining a connected (co)monoid structure one only needs to specify
the maps $\mu_{S,T}$ ($\Delta_{S,T}$) when both $S$ and $T$ are nonempty.

A \demph{Hopf monoid} in the category $({\sf Sp}, \bdot)$ is a monoid and comonoid whose two structures are compatible in an appropriate sense, and which carries an antipode. In this paper we only consider connected Hopf monoids. For such Hopf monoids, the existence of the antipode is guaranteed. The species $\be$ and $\bl$, with the structures outlined above, are two important examples of connected Hopf monoids.  

For further details on Hopf monoids in species, see Chapter 8 of \cite{AguMah:2010}.
The theory of Hopf monoids in species is developed in Part II of this reference;
several examples are discussed in Chapters 12 and 13.

\subsection{Lagrange's theorem for connected Hopf monoids} 

Given a connected Hopf monoid $\bk$ in species, we let $\bk_+$ denote the species
defined by
\[
\bk_+[I] = \begin{cases}
\bk[I] & \text{ if }I\neq\emptyset, \\
    0     & \text{ if }I=\emptyset.
\end{cases}
\]
If $\bk$ is a submonoid of a monoid $\bh$, then $\bk_+\bh$ denotes the right ideal
of $\bh$ generated by $\bk_+$. In other words,
\[
(\bk_+\bh)[I]=\sum_{\substack{S\sqcup T=I\\ S\neq\emptyset}}\mu_{S,T}\bigl(\bk[S]\otimes\bh[T]\bigr).
\]

\begin{theorem}\label{t:main} 
Let $\bh$ be a connected Hopf monoid in the category of species. If $\bk$
is a Hopf submonoid of
$\bh$, then $\bh$ is a free left $\bk$-module. Moreover,
\[
\bh\cong \bk\bdot(\bh/\bk_+\bh)
\]
as left $\bk$-modules (and as species).
\end{theorem}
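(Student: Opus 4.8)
The plan is to mimic Radford's argument for pointed Hopf algebras, as recalled in Theorem~\ref{t:connected}, transported to the setting of species. The essential device is that a connected Hopf monoid is naturally filtered by the \emph{coradical filtration}: one sets $\bh^{(0)}=\tone$ and defines $\bh^{(n)}$ inductively as the preimage under the reduced coproduct $\overline{\Delta}$ of $\bigoplus_{p+q=n,\,p,q\geq1}\bh^{(p)}\bdot\bh^{(q)}$ (equivalently, the $n$th term of the filtration dual to the augmentation-ideal filtration of the dual comonoid). Because $\bh$ is connected, this filtration is exhausting: $\bh=\bigcup_n \bh^{(n)}$. The same construction applied to $\bk$ gives a filtration $\bk^{(n)}$, and since $\bk$ is a Hopf \emph{sub}monoid the inclusion is filtration-preserving, i.e.\ $\bk^{(n)}=\bk\cap\bh^{(n)}$. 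I would carry out the proof by building, inductively on $n$, species $\bq^{(n)}$ and left-$\bk$-module maps $\varphi^{(n)}:\bk\bdot\bq^{(n)}\to\bh$ that are isomorphisms onto $\bh^{(n)}$ and compatible with the filtration, then pass to the colimit to get $\bh\cong\bk\bdot\bq$ with $\bq=\bigcup_n\bq^{(n)}$.

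The inductive step is the heart of the matter, and it is where I expect the main obstacle. Suppose $\varphi^{(n-1)}:\bk\bdot\bq^{(n-1)}\xrightarrow{\ \sim\ }\bh^{(n-1)}$ has been constructed. One wants to choose, for each finite set $I$, a complement to the image of $\bigl(\bk\bdot\bq^{(n-1)}\bigr)[I]$ — more precisely a complement, inside $\bh^{(n)}[I]$, to $(\bk_+\cdot\bh^{(n-1)}+\bh^{(n-1)})[I]$ — and declare that complement to be $\bq^{(n)}[I]\setminus\bq^{(n-1)}[I]$ (the new graded piece). The two things to check are: (i) that this choice can be made $S_I$-equivariantly and naturally in $I$ (this is where we use that $\field$ has characteristic $0$, so that $S_I$-submodules are complemented, and a standard averaging/naturality argument produces a subspecies); and (ii) that the resulting extension $\varphi^{(n)}$ of $\varphi^{(n-1)}$, defined on $\bk\bdot\bq^{(n)}$ by the left-$\bk$-module property and by sending $\tone\bdot\bq^{(n)}$ into the chosen complement, is injective. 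Injectivity is the delicate point: it follows the Radford template, using that the reduced coproduct of an element of $\bh^{(n)}$ lands in lower filtration degree, so a nonzero element of the kernel of $\varphi^{(n)}$ would produce, after applying $(\id\bdot\,\overline{\Delta})$ and invoking the inductive isomorphism, a nonzero element of the kernel of $\varphi^{(n-1)}$, a contradiction. The bookkeeping here — keeping track of which tensor factors the coproduct acts on across the Cauchy product $\bk\bdot\bq$, and matching it against the comodule structure of $\bh$ over $\bk$ — is the part requiring care, but it is formally parallel to the Hopf-algebra case once the monoidal structure of $({\sf Sp},\bdot)$ is handled correctly.

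Once the colimit isomorphism $\varphi:\bk\bdot\bq\xrightarrow{\ \sim\ }\bh$ of left $\bk$-modules (and of species) is in hand, freeness of $\bh$ as a left $\bk$-module is immediate. For the sharper statement $\bh\cong\bk\bdot(\bh/\bk_+\bh)$, I would argue exactly as in the proof of Theorem~\ref{t:connected}: first show $\varphi(\bk_+\bdot\bq)=\bk_+\bh$. The inclusion $\varphi(\bk_+\bdot\bq)\subseteq\bk_+\bh$ holds because $\varphi$ is a morphism of left $\bk$-modules, so $\varphi(\bk_+\bdot\bq)=\bk_+\cdot\varphi(\tone\bdot\bq)\subseteq\bk_+\bh$; conversely, given $k\in\bk_+[S]$ and $h\in\bh[T]$, writing $h=\sum_i\varphi(k_i\otimes q_i)$ and using that $\bk_+$ is a right ideal of $\bk$ gives $\mu_{S,T}(k\otimes h)=\sum_i\varphi(\mu(k\otimes k_i)\otimes q_i)\in\varphi(\bk_+\bdot\bq)$. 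Then, from the splitting $\bk=\tone\oplus\bk_+$ of species we get $\bk\bdot\bq=(\tone\bdot\bq)\oplus(\bk_+\bdot\bq)$, whence
\[
\bh/\bk_+\bh=\varphi(\bk\bdot\bq)/\varphi(\bk_+\bdot\bq)\cong\varphi(\tone\bdot\bq)\cong\bq
\]
as species, and substituting $\bq\cong\bh/\bk_+\bh$ back into $\bh\cong\bk\bdot\bq$ finishes the proof.
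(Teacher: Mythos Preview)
Your last paragraph, identifying $\bq$ with $\bh/\bk_+\bh$ once an isomorphism $\varphi:\bk\bdot\bq\to\bh$ is in hand, is fine and matches the paper verbatim. The problems are all in how you propose to produce $\varphi$.

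First, the filtration is wrong. You take $\bh^{(n)}$ to be the coradical filtration of $\bh$ and assert that $\varphi^{(n)}:\bk\bdot\bq^{(n)}\to\bh$ is an isomorphism \emph{onto} $\bh^{(n)}$. But the coradical filtration is only multiplicative in the sense $\bh^{(i)}\cdot\bh^{(j)}\subseteq\bh^{(i+j)}$, so if $\bk$ has elements of positive coradical degree (i.e.\ $\bk\neq\tone$) then $\bk\cdot\bq^{(n)}$ cannot lie inside $\bh^{(n)}$. The paper avoids this by using a different filtration: it sets $\bk^{(0)}=\bk$ and $\bk^{(n)}=\bk\wedge\bk^{(n-1)}$, wedging with $\bk$ rather than with $\tone$. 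Each $\bk^{(n)}$ is then a left $\bk$-submodule of $\bh$ (Lemma~\ref{l:lemma 1.5}), and the coradical filtration enters only at the end, via the observation $\bh_{(n)}\subseteq\bk^{(n)}$, to show the $\bk^{(n)}$ exhaust $\bh$.

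Second, and more seriously, your injectivity step is not an argument. You say a nonzero element of $\ker\varphi^{(n)}$ would, ``after applying $(\id\bdot\overline{\Delta})$ and invoking the inductive isomorphism,'' produce a nonzero element of $\ker\varphi^{(n-1)}$. But $\overline{\Delta}$ applied to $q_j\in\bq^{(n)}\subseteq\bh$ lands in $\bh\bdot\bh$, not in anything built from $\bq^{(n-1)}$; there is no map back to $\bk\bdot\bq^{(n-1)}$ to compare with. Choosing an arbitrary species complement (even equivariantly, via characteristic~$0$) gives you no control over how $\bk$ acts on it. What Radford's argument actually does --- and what the paper does --- is recognize that the successive quotient $\bk^{(n)}/\bk^{(n-1)}$ carries the structure of a left $\bk$-\emph{Hopf module} (Lemma~\ref{l:lemma 2}: this uses that $\bk^{(n)}=\bk\wedge\bk^{(n-1)}$, so the coproduct of $\bk^{(n)}$ lands in $\bk\bdot\bk^{(n)}+\bk^{(n)}\bdot\bk^{(n-1)}$, inducing a $\bk$-coaction on the quotient). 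The Fundamental Theorem of Hopf Modules (Proposition~\ref{p:FTHM}) then gives freeness of each quotient over $\bk$ for free, with no injectivity computation and no appeal to characteristic~$0$. This is the missing idea.
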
 

The proof is given after a series of preparatory results.
Our argument parallels Radford's proof of the first statement in Theorem~\ref{t:pointed} \cite[Section~4]{Rad:1977}.
The main ingredient is a result of Larson and
Sweedler \cite{LarSwe:1969} known as the fundamental theorem of Hopf modules 
\cite[Theorem~1.9.4]{Mon:1993}. It states that if $(M,\rho)$ is a left Hopf module over $K$, then $M$ is free as a left $K$-module and in fact is isomorphic to the Hopf module $K\otimes Q$, where $Q$ is the space of \demph{coinvariants} for the coaction $\rho \colon M\to K\otimes M$. 
Takeuchi extends this result to the context of Hopf monoids in a braided monoidal category with equalizers \cite[Theorem~3.4]{Tak:1999}; a similar result (in a more restrictive setting)
is given by Lyubashenko \cite[Theorem~1.1]{Lyu:1995}. As a special case
of Takeuchi's result, we have the following.

\begin{proposition}\label{p:FTHM} Let $\bmm$ be a left Hopf module over a connected Hopf monoid $\bk$ in species. There is an isomorphism $\bmm \cong \bk \bdot \bq$ of left Hopf modules, where 
\[
\bq[I] := \bigl\{m\in \bmm[I] \mid \text{$\rho_{S,T}(m) = 0$ for $S\sqcup T=I$, $T\neq I$} \bigr\}.
\]
In particular, $\bmm$ is free as a left $\bk$-module.
\end{proposition}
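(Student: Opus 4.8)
The plan is to invoke Takeuchi's version of the fundamental theorem of Hopf modules, specialized to the monoidal category $(\mathsf{Sp},\bdot)$, and then merely verify that the abstract space of coinvariants appearing in that theorem is exactly the species $\bq$ described in the statement. Since the paper explicitly cites \cite[Theorem~3.4]{Tak:1999} and notes that $(\mathsf{Sp},\bdot)$ is a symmetric (hence braided) monoidal category, the first task is to check that this category has equalizers: this is immediate, because $\mathsf{Sp}$ is a functor category into $\mathsf{Vec}_\field$ and limits are computed pointwise, so the equalizer of two morphisms of species is the species $I\mapsto\{$equalizer of the two linear maps on $\bq[I]\}$. Thus Takeuchi's hypotheses are met and we obtain an isomorphism $\bmm\cong\bk\bdot\bq'$ of left Hopf $\bk$-modules, where $\bq'$ is the equalizer of the coaction $\rho\colon\bmm\to\bk\bdot\bmm$ and the map $m\mapsto 1\otimes m$ (the unit of $\bk$ tensored with the identity of $\bmm$).

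Next I would identify $\bq'$ concretely. Unwinding the Cauchy product, the coaction has components $\rho_{S,T}\colon\bmm[I]\to\bk[S]\otimes\bmm[T]$ for each decomposition $I=S\sqcup T$, and the comparison map $m\mapsto 1\otimes m$ lands in the single summand indexed by $S=\emptyset$, $T=I$, where it is the canonical identification $\bmm[I]\cong\bk[\emptyset]\otimes\bmm[I]$ (using connectedness of $\bk$). Since the $S=\emptyset$ component of $\rho$ is forced by counitality to be precisely this canonical identification, an element $m\in\bmm[I]$ lies in the equalizer if and only if $\rho_{S,T}(m)=0$ for every decomposition with $S\neq\emptyset$, equivalently for every decomposition with $T\neq I$. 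This is exactly the defining condition for $\bq[I]$ in the statement, so $\bq'=\bq$, and the ``in particular'' clause on freeness is part of the conclusion of Takeuchi's theorem.

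The only real subtlety—and the step I would present most carefully—is the bookkeeping in the previous paragraph: making sure that ``coinvariants'' in Takeuchi's abstract formulation, stated via an equalizer in a general braided category, really does translate to the pointwise, summand-by-summand vanishing condition on $\rho_{S,T}$, and in particular that the $S=\emptyset$ summand contributes no constraint because it is automatically matched by the comparison map. Connectedness of $\bk$ (so that $\bk[\emptyset]=\field$ and the $\emptyset$-components of all structure maps are canonical isomorphisms) is what makes this clean; without it one would have to be more careful. Everything else is a direct citation, so no further computation is needed.
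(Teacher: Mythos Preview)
Your proposal is correct and matches the paper's approach exactly: the paper does not supply an independent proof but simply states the proposition as ``a special case of Takeuchi's result'' \cite[Theorem~3.4]{Tak:1999}, so your plan of invoking that theorem and then unpacking the abstract equalizer of coinvariants into the componentwise condition on the $\rho_{S,T}$ is precisely what is intended. The verifications you flag (that $\mathsf{Sp}$ has equalizers, and that connectedness of $\bk$ makes the $S=\emptyset$ summand automatic) are the only things left implicit by the paper, and you have handled them correctly.
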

Here $\rho:\bmm\to\bk\bdot\bmm$ denotes the comodule structure, which consists of maps
\[
\rho_{S,T}: \bmm[I] \to \bk[S]\otimes\bmm[T],
\]
one for each ordered decomposition $I=S\sqcup T$.

\medskip

Given a comonoid $\bh$ and two subspecies $\bu,\bv \subseteq \bh$, the \demph{wedge} of $\bu$ and $\bv$ is the subspecies $\bu\wedge\bv$ of $\bh$ defined by
\[
\bu\wedge\bv := \Delta^{-1}(\bu\bdot\bh + \bh\bdot\bv).
\]

The remaining ingredients needed for the proof are supplied by the following lemmas.

\begin{lemma}\label{l:lemma 1}
Let $\bh$ be a comonoid in species.
If $\bu$ and $\bv$ are subcomonoids of $\bh$, then:
\begin{itemize}
\item[(i)] $\bu\wedge\bv$ is a subcomonoid of $\bh$ and $\bu+\bv\subseteq \bu\wedge\bv$;
\item[(ii)]
$\bu\wedge\bv= \Delta^{-1}\bigl(\bu\bdot(\bu\wedge\bv) + (\bu\wedge\bv)\bdot\bv\bigr)$.
\end{itemize}
\end{lemma}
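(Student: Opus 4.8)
The plan is to prove both statements by working pointwise on finite sets and reducing everything to the comonoid structure maps, exactly as in the classical Hopf-algebra setting. For part (i), I would first recall that the wedge $\bu\wedge\bv=\Delta^{-1}(\bu\bdot\bh+\bh\bdot\bv)$ is automatically a subspecies of $\bh$ since it is a preimage; the content is that it is closed under $\Delta$. The standard trick is to apply coassociativity: if $x\in(\bu\wedge\bv)[I]$ and $I=S\sqcup T$, I want to show $\Delta_{S,T}(x)\in\bigl((\bu\wedge\bv)\bdot\bh + \bh\bdot(\bu\wedge\bv)\bigr)[S]\otimes[T]$-components, i.e. that $\Delta_{S,T}(x)$ lands in $(\bu\wedge\bv)\bdot\bh+\bh\bdot(\bu\wedge\bv)$. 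One computes $(\Delta\bdot\id)\Delta(x)=(\id\bdot\Delta)\Delta(x)$; since $\Delta(x)\in\bu\bdot\bh+\bh\bdot\bv$, the left side shows the first tensor leg of $\Delta(x)$ decomposes through $\bu$ or through something whose further coproduct lands in $\bh\bdot\bv$, and bookkeeping over the refined decompositions gives membership in the wedge on each factor. The inclusion $\bu+\bv\subseteq\bu\wedge\bv$ is immediate from counitality: for $x\in\bu[I]$, $\Delta_{S,T}(x)\in\bu[S]\otimes\bh[T]$ because $\bu$ is a subcomonoid, so $x\in\Delta^{-1}(\bu\bdot\bh)$; symmetrically for $\bv$ using the $\bh\bdot\bv$ summand.

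For part (ii), the inclusion $\supseteq$ is clear because $\bu\bdot(\bu\wedge\bv)+(\bu\wedge\bv)\bdot\bv\subseteq\bu\bdot\bh+\bh\bdot\bv$, so $\Delta^{-1}$ of the smaller set is contained in $\bu\wedge\bv$. The substance is $\subseteq$: given $x\in(\bu\wedge\bv)[I]$ and a decomposition $I=S\sqcup T$, I must show $\Delta_{S,T}(x)\in\bu[S]\otimes(\bu\wedge\bv)[T]+(\bu\wedge\bv)[S]\otimes\bv[T]$. The idea is to feed $\Delta(x)\in\bu\bdot\bh+\bh\bdot\bv$ into coassociativity in the form $(\id\bdot\Delta)\Delta=(\Delta\bdot\id)\Delta$ and use that $\bu$ is a subcomonoid (so $\Delta(\bu)\subseteq\bu\bdot\bu$) together with part (i) characterizing when the tail lies in $\bu\wedge\bv$. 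Concretely, write $\Delta(x)=a+b$ with $a\in(\bu\bdot\bh)[I]$ and $b\in(\bh\bdot\bv)[I]$ (these are sums over decompositions $I=P\sqcup Q$ of elements of $\bu[P]\otimes\bh[Q]$ and $\bh[P]\otimes\bv[Q]$ respectively); then refine each $\Delta_{P,Q}$-component further via $\Delta$ on the appropriate leg and reassemble using coassociativity to land inside the right-hand side. This is the kind of argument that is routine in $\sym$ or in graded Hopf algebras but requires care here to track the combinatorics of ordered set decompositions $S\sqcup T=I$ and the naturality of all the maps.

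The main obstacle I anticipate is purely notational: in species every statement that in the vector-space case would be a one-line Sweedler-notation computation now carries an extra layer of indexing by ordered decompositions of the underlying set, and one has to make sure that the ``refinement of a decomposition'' bookkeeping in the coassociativity pentagon is matched correctly on both sides. In particular, proving part (ii) requires simultaneously using $\Delta(\bu)\subseteq\bu\bdot\bu$ (which is part (i) specialized, or rather the subcomonoid hypothesis) and the defining property of the wedge, and keeping straight which of the three tensor legs coming out of $(\Delta\bdot\id)\Delta$ is being pushed into $\bu$, into $\bv$, or into $\bu\wedge\bv$. I would organize the proof so that part (i) is fully established first (including $\Delta(\bu\wedge\bv)\subseteq(\bu\wedge\bv)\bdot(\bu\wedge\bv)$, which is what makes the wedge usable in later lemmas), and then part (ii) follows by a short coassociativity argument invoking (i). I expect no new ideas beyond the classical proof; the only real work is transcribing it faithfully into the language of connected comonoids in species.
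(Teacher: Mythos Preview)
Your approach is correct, and for part~(i) it matches the paper, which simply notes that the classical coalgebra argument (Abe, Section~3.3) carries over to species unchanged. For part~(ii), however, the paper's proof is considerably shorter than the direct coassociativity computation you outline: once part~(i) gives $\Delta(\bu\wedge\bv)\subseteq(\bu\wedge\bv)\bdot(\bu\wedge\bv)$, and the definition of the wedge gives $\Delta(\bu\wedge\bv)\subseteq\bu\bdot\bh+\bh\bdot\bv$, one simply observes that
\[
\bigl((\bu\wedge\bv)\bdot(\bu\wedge\bv)\bigr)\cap(\bu\bdot\bh+\bh\bdot\bv)=\bu\bdot(\bu\wedge\bv)+(\bu\wedge\bv)\bdot\bv,
\]
using only the inclusions $\bu,\bv\subseteq\bu\wedge\bv\subseteq\bh$ (a pure linear-algebra fact, checked componentwise). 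This yields the nontrivial inclusion of~(ii) without any further coassociativity or refinement-of-decompositions bookkeeping; all of that is confined to the proof of~(i). So the ``main obstacle'' you anticipate for~(ii) evaporates once~(i) is in hand.
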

\begin{proof}
(i) The proofs of the analogous statements for coalgebras given in \cite[Section~3.3]{Abe:1980}
extend to this setting. 

(ii) {}From the definition, $\Delta^{-1}\bigl(\bu\bdot(\bu\wedge\bv) + (\bu\wedge\bv)\bdot\bv\bigr)\subseteq \bu\wedge\bv$. Now, since $\bu\wedge\bv$ is a subcomonoid,
\[
\Delta(\bu\wedge\bv)\subseteq \bigr((\bu\wedge\bv)\bdot(\bu\wedge\bv)\bigr)\cap
(\bu\bdot\bh + \bh\bdot\bv) = \bu\bdot(\bu\wedge\bv) + (\bu\wedge\bv)\bdot\bv,
\]
since $\bu,\bv\subseteq \bu\wedge\bv$. This proves the converse inclusion.
\end{proof}

\begin{lemma}\label{l:lemma 1.5}
Let $\bh$ be a Hopf monoid in species and $\bk$ be a submonoid. 
Let $\bu,\bv\subseteq\bh$ be subspecies which are left $\bk$-submodules of $\bh$.
Then $\bu \wedge \bv$ is a left $\bk$-submodule of $\bh$.
\end{lemma}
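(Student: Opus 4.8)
The plan is to show that the wedge $\bu\wedge\bv$, defined as $\Delta^{-1}(\bu\bdot\bh+\bh\bdot\bv)$, is stable under the left $\bk$-action. Since $\bu\wedge\bv$ is already a subspecies of $\bh$, the content is to verify that for $k\in\bk[R]$ and $m\in(\bu\wedge\bv)[J]$ with $R\djcup J=I$, the product $\mu_{R,J}(k\otimes m)$ again lies in $(\bu\wedge\bv)[I]$; equivalently, that $\Delta_{S,T}\bigl(\mu_{R,J}(k\otimes m)\bigr)$ lies in $\bigl(\bu\bdot\bh+\bh\bdot\bv\bigr)[I]$ for every decomposition $I=S\djcup T$.

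First I would invoke the compatibility axiom between the product and coproduct of $\bh$ (the Hopf monoid axiom), which expresses $\Delta_{S,T}\circ\mu_{R,J}$ as a sum, over the ways of splitting $R=R_1\djcup R_2$ and $J=J_1\djcup J_2$ with $R_1\djcup J_1=S$ and $R_2\djcup J_2=T$, of $(\mu_{R_1,J_1}\otimes\mu_{R_2,J_2})$ applied to $(\id\otimes\beta\otimes\id)\bigl(\Delta_{R_1,R_2}(k)\otimes\Delta_{J_1,J_2}(m)\bigr)$, where $\beta$ is the braiding. Since $\bk$ is a \emph{sub}comonoid of $\bh$ (this is where we use that $\bk$ is a Hopf submonoid, not merely a submonoid), $\Delta_{R_1,R_2}(k)$ lies in $\bk[R_1]\otimes\bk[R_2]\subseteq\bh[R_1]\otimes\bh[R_2]$. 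For $m$, since $m\in\bu\wedge\bv$, we have $\Delta_{J_1,J_2}(m)\in\bigl(\bu\bdot\bh+\bh\bdot\bv\bigr)[J]$, i.e.\ each term is of the form (element of $\bu[J_1]$)$\otimes$(element of $\bh[J_2]$) plus (element of $\bh[J_1]$)$\otimes$(element of $\bv[J_2]$).

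Now I would split the resulting sum into two pieces according to these two types of terms for $\Delta(m)$. In the first type, the first tensor factor after reassembly is $\mu_{R_1,J_1}$ applied to (element of $\bk[R_1]$)$\otimes$(element of $\bu[J_1]$); because $\bu$ is a left $\bk$-submodule of $\bh$, this lands in $\bu[S]$, so the whole term lies in $\bigl(\bu\bdot\bh\bigr)[I]$. In the second type, the second tensor factor is $\mu_{R_2,J_2}$ applied to (element of $\bk[R_2]$)$\otimes$(element of $\bv[J_2]$); because $\bv$ is a left $\bk$-submodule, this lands in $\bv[T]$, so the term lies in $\bigl(\bh\bdot\bv\bigr)[I]$. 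Summing, $\Delta_{S,T}\bigl(\mu_{R,J}(k\otimes m)\bigr)\in\bigl(\bu\bdot\bh+\bh\bdot\bv\bigr)[I]$ for all $S\djcup T=I$, which is exactly the condition $\mu_{R,J}(k\otimes m)\in(\bu\wedge\bv)[I]$. Naturality with respect to bijections is automatic since every construction in sight is natural.

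The main obstacle, such as it is, is purely bookkeeping: tracking which of the two summands of $\bu\bdot\bh+\bh\bdot\bv$ each term of $\Delta(m)$ contributes to after passing through the compatibility axiom, and checking that the $\bk$-factor always attaches to the ``correct'' side ($\bu$ gets hit on its left, $\bv$ gets hit on its left) so that the $\bk$-submodule hypotheses on $\bu$ and $\bv$ apply. No equalizer or coinvariants machinery is needed here; this lemma is the elementary input that will later be fed, together with Lemma~\ref{l:lemma 1}, into the inductive construction of the filtration used to prove Theorem~\ref{t:main}.
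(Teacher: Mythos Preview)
Your proof is correct and is the componentwise unpacking of the paper's argument, which phrases the same computation abstractly: $\Delta$ is a morphism of left $\bh$-modules (hence of left $\bk$-modules) via the Hopf compatibility, $\bu\bdot\bh+\bh\bdot\bv$ is a left $\bk$-submodule of $\bh\bdot\bh$ (this is exactly your case split), and so its preimage under $\Delta$ is a $\bk$-submodule. Your parenthetical noting that one actually needs $\bk$ to be a sub\emph{co}monoid is correct and is tacitly used in the paper's ``by hypothesis'' step as well.
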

\begin{proof}
Since $\bh$ is a Hopf monoid, the coproduct $\Delta:\bh\to\bh\bdot\bh$ is a morphism
of left $\bh$-modules, where $\bh$ acts on $\bh\bdot\bh$ via $\Delta$. Hence it is also 
a morphism of left $\bk$-modules. By hypothesis, $\bu\bdot\bh+\bh\bdot\bv$ is a left $\bk$-submodule
of $\bh\bdot\bh$. Hence, $\bu\wedge\bv=\Delta^{-1}(\bu\bdot\bh + \bh\bdot\bv)$ is a left $\bk$-submodule of $\bh$. 
\end{proof}

\begin{lemma}\label{l:lemma 2}
Let $\bh$ be a Hopf monoid in species and $\bk$ a Hopf submonoid. 
Let $\bc$ be a subcomonoid of $\bh$ and a left $\bk$-submodule of $\bh$.
Then $(\bk \wedge \bc)/\bc$ is a left $\bk$-Hopf module.
\end{lemma}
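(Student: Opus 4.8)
The plan is to equip the quotient species $\mathbf{d}/\bc$, where $\mathbf{d}:=\bk\wedge\bc$, with a left $\bk$-module structure and a left $\bk$-comodule structure and then to verify the single axiom relating the two; this exhibits $(\bk\wedge\bc)/\bc=\mathbf{d}/\bc$ as a left $\bk$-Hopf module by definition. Two facts from the preceding lemmas are used throughout. Since $\bk$ (being a Hopf submonoid) and $\bc$ are both subcomonoids of $\bh$, Lemma~\ref{l:lemma 1} gives that $\mathbf{d}$ is a subcomonoid of $\bh$ with $\bc\subseteq\mathbf{d}$, and moreover that $\Delta(\mathbf{d})\subseteq\bk\bdot\mathbf{d}+\mathbf{d}\bdot\bc$. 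Since $\bk$ (being a submonoid) and $\bc$ (by hypothesis) are both left $\bk$-submodules of $\bh$, Lemma~\ref{l:lemma 1.5} gives that $\mathbf{d}$ is a left $\bk$-submodule of $\bh$. As $\bc$ is then a $\bk$-submodule of $\mathbf{d}$, the action $\mu\colon\bk\bdot\mathbf{d}\to\mathbf{d}$ induces an action $\bk\bdot(\mathbf{d}/\bc)\to\mathbf{d}/\bc$, making $\mathbf{d}/\bc$ a left $\bk$-module. This much is immediate.

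For the comodule structure I would push the coproduct of $\bh$ into the quotient. Let $\pi\colon\bh\to\bh/\bc$ be the canonical surjection and consider the composite
\[
\mathbf{d}\;\xrightarrow{\Delta}\;\bh\bdot\bh\;\xrightarrow{\id\bdot\pi}\;\bh\bdot(\bh/\bc).
\]
Since $\id\bdot\pi$ annihilates $\mathbf{d}\bdot\bc$, the inclusion $\Delta(\mathbf{d})\subseteq\bk\bdot\mathbf{d}+\mathbf{d}\bdot\bc$ shows that this composite carries $\mathbf{d}$ into $(\id\bdot\pi)(\bk\bdot\mathbf{d})=\bk\bdot(\mathbf{d}/\bc)$, the last equality holding componentwise because $\bc\subseteq\mathbf{d}$. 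The composite also annihilates $\bc$, because $\Delta(\bc)\subseteq\bc\bdot\bc\subseteq\bh\bdot\bc$. It therefore descends to a map $\rho\colon\mathbf{d}/\bc\to\bk\bdot(\mathbf{d}/\bc)$, which will serve as the coaction.

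It then remains to check that $\rho$ is coassociative and counital---so that $(\mathbf{d}/\bc,\rho)$ is a left $\bk$-comodule---and that $\rho$ is a morphism of left $\bk$-modules when $\bk\bdot(\mathbf{d}/\bc)$ carries the diagonal left $\bk$-action that uses $\Delta_\bk$, with $\bk$ multiplying into the first tensor factor and acting on the second; this last assertion is precisely the Hopf module compatibility axiom. I would establish all three at once by the same device: precompose with the surjection $\mathbf{d}\to\mathbf{d}/\bc$, so that it suffices to verify the corresponding identities for $(\id\bdot\pi)\circ\Delta\colon\mathbf{d}\to\bk\bdot(\mathbf{d}/\bc)$; then use that $\Delta$ maps $\mathbf{d}$ into $\mathbf{d}\bdot\mathbf{d}$ and that $\pi$ restricts on $\mathbf{d}$ to the projection $\mathbf{d}\to\mathbf{d}/\bc$; and transport to the quotient, respectively, the coassociativity of $\Delta$, the counit axiom of $\bh$, and the fact---recorded in the proof of Lemma~\ref{l:lemma 1.5}---that $\Delta\colon\bh\to\bh\bdot\bh$ is a morphism of left $\bh$-modules for the $\Delta$-action of $\bh$ on $\bh\bdot\bh$. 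Here one uses repeatedly that $\Delta$ and $\epsilon$ restrict to the structure maps of the subcomonoid $\bk$, that $\bc$ is a subcomonoid (so every term carrying a $\bc$-factor is killed by $\pi$), and again the inclusion $\Delta(\mathbf{d})\subseteq\bk\bdot\mathbf{d}+\mathbf{d}\bdot\bc$.

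The conceptual crux, and essentially the only place where the hypotheses genuinely enter, is the construction of $\rho$ in the second paragraph: one must see that $(\id\bdot\pi)\circ\Delta$ lands in $\bk\bdot(\mathbf{d}/\bc)$---not merely in $\bh\bdot(\mathbf{d}/\bc)$---which rests on $\mathbf{d}$ being a subcomonoid together with the sharpened inclusion $\Delta(\mathbf{d})\subseteq\bk\bdot\mathbf{d}+\mathbf{d}\bdot\bc$ of Lemma~\ref{l:lemma 1}(ii), and that it annihilates $\bc$, which rests on $\bc$ being a subcomonoid. The axiom verifications of the third paragraph, though somewhat lengthy, are elementary diagram chases with no surprises and constitute the main bulk of the work.
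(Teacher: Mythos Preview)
Your argument is correct and follows essentially the same route as the paper: the $\bk$-module structure on $(\bk\wedge\bc)/\bc$ comes from Lemma~\ref{l:lemma 1.5}, the coaction is obtained by pushing $\Delta$ through the projection using the refined inclusion $\Delta(\bk\wedge\bc)\subseteq\bk\bdot(\bk\wedge\bc)+(\bk\wedge\bc)\bdot\bc$ of Lemma~\ref{l:lemma 1}(ii) together with the fact that $\bc$ is a subcomonoid, and the Hopf module compatibility is inherited from that of $\bh$. Your write-up is more explicit about the axiom verifications than the paper's, but the structure of the proof is the same.
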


\begin{proof}
By Lemma~\ref{l:lemma 1.5}, $\bk\wedge\bc$ is a left $\bk$-submodule of $\bh$. Therefore, the quotient $(\bk\wedge\bc)/\bc$ by the left $\bk$-submodule $\bc$ is a left $\bk$-module.

We next argue that $(\bk\wedge\bc)/\bc$ is a $\bk$-comodule. Consider the composite
\[
\bk\wedge\bc \map{\Delta} \bk\bdot(\bk\wedge\bc) + (\bk\wedge\bc)\bdot\bc
\onto \bk\bdot\bigl(\bk\wedge\bc)/\bc,
\]
where the first map is granted by Lemma~\ref{l:lemma 1} and the second is the projection modulo $\bc$ on the second coordinate. Since $\bc$ is a subcomonoid,
the composite factors through $\bc$ and induces
\[
(\bk\wedge\bc)/\bc \to \bk\bdot\bigl(\bk\wedge\bc)/\bc.
\]
This defines a left $\bk$-comodule structure on $(\bk\wedge\bc)/\bc$.

Finally, the compatibility between the module and comodule structures on $(\bk\wedge\bc)/\bc$ is inherited from
the compatibility between the product and coproduct of $\bh$.
\end{proof}

We are nearly ready for the proof of the main result. First, recall the \demph{coradical filtration}  of a connected comonoid in species \cite[Section~8.10]{AguMah:2010}. Given a connected comonoid $\bc$, define subspecies $\bc_{(n)}$ by
\[
\bc_{(0)}=\tone
\qand
\bc_{(n)} = \bc_{(0)} \wedge \bc_{(n-1)} \text{ \ for all $n\geq 1$.}
\]
We then have 
\[
\bc_{(0)} \subseteq \bc_{(1)} \subseteq \dotsb\subseteq \bc_{(n)}\subseteq \dotsb \bc
\qand
\bc= \bigcup_{n\geq 0} \bc_{(n)}.
\]

\begin{proof}[Proof of Theorem~\ref{t:main}] We show that there is a species $\bq$ such that $\bh \cong \bk\bdot \bq$ as left $\bk$-modules. As in the proof of Theorem~\ref {t:connected}, it then follows that $\bq\cong \bh/\bk_+\bh$.

Define a sequence  $\bk^{(n)}$ of subspecies of $\bh$ by 
\[
\bk^{(0)}=\bk \qand \bk^{(n)} = \bk \wedge \bk^{(n-1)} \text{ \ for all $n\geq 1$.}
\] 

Each $\bk^{(n)}$ is a subcomonoid and a left $\bk$-submodule of $\bh$. 
This follows from Lemmas~\ref{l:lemma 1}
and~\ref{l:lemma 1.5} by induction on $n$.
Then Lemma~\ref{l:lemma 2} provides a left $\bk$-Hopf module structure
on the quotient species $\bk^{(n)}/\bk^{(n-1)}$ for all $n\geq1$. 
Hence $\bk^{(n)}/\bk^{(n-1)}$ is a free left $\bk$-module, by Proposition~\ref{p:FTHM}.

We claim that there exists a sequence of species $\bq_n$
such that
\[
\bk^{(n)} \cong \bk\bdot \bq_n
\]
as left $\bk$-modules for all $n\geq 0$; that is, each $\bk^{(n)}$ is a free left $\bk$-module. Moreover, we claim that the $\bq_n$ can be chosen so that
\[
\bq_0\subseteq \bq_1\ \subseteq \dotsb\subseteq \bq_n \subseteq\dotsb
\]
and the above isomorphisms are compatible with the inclusions $\bq_{n-1}\subseteq\bq_n$ and $\bk^{(n-1)}\subseteq\bk^{(n)}$.

We prove the claims by induction on $n\geq 0$. We start by letting $\bq_0=\tone$. 
For $n\geq 1$, we have
\[
\bk^{(n-1)}\cong \bk\bdot \bq_{n-1}
\qand
\bk^{(n)}/\bk^{(n-1)}\cong \bk\bdot \bq'_n
\] 
for some species $ \bq'_n$ (the former by induction hypothesis and the latter by the above argument). Let
\[
\bq_{n} = \bq_{n-1}\oplus \bq'_n.
\]
By choosing an arbitrary $\bk$-module section of the map
$\bk^{(n)}\onto\bk^{(n)}/\bk^{(n-1)}\cong \bk\bdot \bq'_n$ (possible by freeness),
we obtain an isomorphism 
\[
\bk^{(n)}\cong \bk\bdot \bq_{n}
\]
extending the isomorphism $\bk^{(n-1)}\cong \bk\bdot \bq_{n-1}$.
This proves the claims.

We utilize the coradical filtration of $\bh$ to finish the proof. Since $\bh$ is connected, $\bh_{(0)}=\tone \subseteq \bk= \bk^{(0)}$, and by induction,
\[
	\bh_{(n)} \subseteq \bk^{(n)} \quad\hbox{for all }n\geq0.
\]
Hence,
\[
\bh= \bigcup_{n\geq 0} \bh_{(n)}
=\bigcup_{n\geq 0} \bk^{(n)}
\cong\bigcup_{n\geq 0} \bk\bdot \bq_{n}
\cong\bk\bdot \bq
\text{ \ where \ }
\bq=\bigcup_{n\geq 0} \bq_n.
\] 
Thus $\bh$ is free as a left $\bk$-module. 
\end{proof}

Let $\pi:\bh\to\bk$ be a morphism of Hopf monoids. The \demph{right Hopf kernel} of $\pi$ is the species defined by
\begin{equation}\label{e:hker}
\hker(\pi)=\ker\bigl(\bh \map{\Delta} \bh\bdot\bh\map{\pi_{+}\bdot\,\id} \bk_{+\!}\bdot\bh\bigr),
\end{equation}
where $\pi_+:\bh\to\bk_+$ is $\pi$ followed by the canonical projection $\bk\onto\bk_+$.

For the following result, we employ duality for Hopf monoids~\cite[Section~8.6.2]{AguMah:2010}. (We assume all species are finite dimensional.)

\begin{theorem}\label{t:maindual} 
Let $\bh$ be a connected Hopf monoid in the category of species and $\bk$
a quotient Hopf monoid via a morphism $\pi:\bh\onto\bk$.
Then $\bh$ is a cofree left $\bk$-comodule. Moreover,
\[
\bh\cong \bk\bdot \hker(\pi)
\]
as left $\bk$-comodules (and as species).
\end{theorem}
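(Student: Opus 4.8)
The plan is to derive Theorem~\ref{t:maindual} from Theorem~\ref{t:main} by dualizing. Since all species are finite dimensional and $\bh$ is connected, the dual species $\bh^\vee$ is again a connected Hopf monoid, with product dual to the coproduct of $\bh$ and coproduct dual to the product of $\bh$ \cite[Section~8.6.2]{AguMah:2010}. The surjection $\pi:\bh\onto\bk$ then dualizes to an injective morphism of Hopf monoids $\pi^\vee:\bk^\vee\hookrightarrow\bh^\vee$, so $\bk^\vee$ is (isomorphic to) a Hopf submonoid of $\bh^\vee$.

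First I would apply Theorem~\ref{t:main} to the pair $\bk^\vee\subseteq\bh^\vee$. This yields a species, namely $\bk^\vee$-module quotient $\mathbf{q}:=\bh^\vee/(\bk^\vee)_+\bh^\vee$, together with an isomorphism $\bh^\vee\cong\bk^\vee\bdot\mathbf{q}$ of left $\bk^\vee$-modules. Dualizing this isomorphism and using $(\bh^\vee)^\vee\cong\bh$, $(\bk^\vee)^\vee\cong\bk$, together with the fact that the duality functor is compatible with the Cauchy product (again in the finite dimensional setting), gives $\bh\cong\bk\bdot\mathbf{q}^\vee$ as left $\bk$-comodules and hence as species. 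In particular $\bh$ is cofree as a left $\bk$-comodule, since freeness of a module dualizes to cofreeness of the dual comodule. It remains only to identify $\mathbf{q}^\vee$ with $\hker(\pi)$.

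For that last step, observe that $\mathbf{q}=\coker\bigl((\bk^\vee)_+\bdot\bh^\vee\hookrightarrow\bh^\vee\bdot\bh^\vee\map{\mu}\bh^\vee\bigr)$, where the first map is the inclusion $(\bk^\vee)_+\hookrightarrow\bk^\vee$ followed by $\pi^\vee\bdot\id$. Since the dual of a cokernel is the kernel of the dual map, $\mathbf{q}^\vee$ is the kernel of the composite obtained by dualizing: the dual of $\mu$ is the coproduct $\Delta:\bh\to\bh\bdot\bh$; the dual of $\pi^\vee\bdot\id$ is $\pi\bdot\id$; and under the identification $(\bk^\vee)_+[I]=\bk[I]^\vee=\bk_+[I]^\vee$ for $I\neq\emptyset$, the dual of the inclusion $(\bk^\vee)_+\hookrightarrow\bk^\vee$ is the projection $\bk\onto\bk_+$. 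Composing, the dual map is exactly $\bh\map{\Delta}\bh\bdot\bh\map{\pi_+\bdot\id}\bk_+\bdot\bh$, whose kernel is $\hker(\pi)$ by definition~\eqref{e:hker}. Hence $\mathbf{q}^\vee=\hker(\pi)$.

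I expect the main obstacle to be precisely the bookkeeping in this final identification: one must check carefully that ``passing to the quotient $\bh^\vee/(\bk^\vee)_+\bh^\vee$'' dualizes to ``taking the kernel of $\pi_+\bdot\id$ after $\Delta$'', keeping track of the direction of every arrow, of the fact that $(\bk^\vee)_+$ agrees with the dual of $\bk_+$ in positive degrees and vanishes in degree zero, and of the compatibility of $(-)^\vee$ with $\bdot$ and with the module/comodule dictionary. Once this dictionary is set up as in \cite[Section~8.6.2]{AguMah:2010}, the remaining verifications are formal.
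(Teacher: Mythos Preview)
Your proposal is correct and follows essentially the same route as the paper: dualize $\pi$ to embed $\bk^\vee$ in $\bh^\vee$, apply Theorem~\ref{t:main}, dualize back, and identify $(\bh^\vee/(\bk^\vee)_+\bh^\vee)^\vee$ with $\hker(\pi)$ via the cokernel/kernel duality. The paper's proof is terser but records exactly the same cokernel description you unwind in your final paragraph.
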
 
\begin{proof} By duality,
$\bk^*$ is a Hopf submonoid of $\bh^*$, so $\bh^*\cong\bk^*\bdot(\bh^*/\bk^*_+\bh^*)$
by Theorem~\ref{t:main}.
Dualizing again we obtain the result, since
\[
\bh^*/\bk^*_+\bh^*=\coker\bigl(\bk^*_{+\!}\bdot\bh^* \map{\pi^*_+\bdot\,\id} 
\bh^{*\!}\bdot\bh^* \map{\Delta^*} 
\bh^* \bigr). \qedhere
\]
\end{proof}

\section{Applications and examples}\label{s:applications}

\subsection{A test for Hopf submonoids}

Two invariants associated to a (finite dimensional) species $\bq$ are the \demph{exponential generating series} $\exponential{\bq}{x}$ and the
\demph{type generating series} $\type{\bq}{x}$. They
are given by
\[
\exponential{\bq}{x} = \sum_{n\geq 0} \dim \bq[n]\, \frac{x^n}{n!}
\qand
\type{\bq}{x} = \sum_{n\geq 0} \dim \bq[n]_{S_n}\, x^n,
\]
where 
\[
\bq[n]_{S_n} =\bq[n]/\field\{v-\sigma\cdot v \mid v\in \bq[n],\ \sigma\in S_n\}. 
\]
Both are specializations of the \demph{cycle index series} $\cycle{\bq}{x_1,x_2,\dotsc}$; see \cite[Section~1.2]{BerLabLer:1998} for the definition. Specifically, 
\begin{gather*}
	\exponential{\bq}{x} = \cycle{\bq}{x,0,0,\dotsc}
\qand
	\type{\bq}{x} = \cycle{\bq}{x,x^2,x^3,\dotsc} .
\end{gather*}

The cycle index series is multiplicative under Cauchy product \cite[Section~1.3]{BerLabLer:1998}: if $\bh = \bk \bdot \bq$, then $\cycle{\bh}{x_1,x_2,\dotsc} = \cycle{\bk}{x_1,x_2,\dotsc} \, \cycle{\bq}{x_1,x_2,\dotsc}$. By specialization, the same is true for the exponential
and type generating series.

Let $\bQ_{\geq 0}$ denote the nonnegative rational numbers. An immediate consequence of Theorems~\ref{t:main} and~\ref{t:maindual} is the following.

\begin{corollary}\label{c:Sp series} 
Let $\bh$ and $\bk$ be connected Hopf monoids in species. 
Suppose $\bk$ is either a Hopf submonoid or a quotient Hopf monoid of $\bh$.
Then the quotient of cycle index series
$\cycle{\bh}{x_1,x_2,\dotsc} / \cycle{\bk}{x_1,x_2,\dotsc}$ is nonnegative, i.e., belongs to $\bQ_{\geq 0}\llb x_1,x_2,\dotsc \rrb$. 
In particular, the quotients $\exponential{\bh}{x}/\exponential{\bk}{x}$ and $\type{\bh}{x}/\type{\bk}{x}$ are also nonnegative.
\end{corollary}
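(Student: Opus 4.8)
The plan is to read the statement off Theorems~\ref{t:main} and~\ref{t:maindual} combined with the multiplicativity of the cycle index series under the Cauchy product. First I would note that, under either hypothesis, there is an honest species $\bq$ together with a \emph{species} isomorphism $\bh\cong\bk\bdot\bq$: if $\bk$ is a Hopf submonoid, take $\bq=\bh/\bk_+\bh$ (Theorem~\ref{t:main}); if $\bk$ is a quotient Hopf monoid via $\pi\colon\bh\onto\bk$, take $\bq=\hker(\pi)$ (Theorem~\ref{t:maindual}). In both theorems the displayed isomorphism is asserted to hold in particular as species, which is all that the corollary uses.

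Next I would invoke the multiplicativity of $\cycle{-}{x_1,x_2,\dotsc}$ for $\bdot$ recalled above from \cite[Section~1.3]{BerLabLer:1998}. Applied to $\bh\cong\bk\bdot\bq$ it yields
\[
\cycle{\bh}{x_1,x_2,\dotsc}=\cycle{\bk}{x_1,x_2,\dotsc}\,\cycle{\bq}{x_1,x_2,\dotsc}.
\]
Since $\bk$ is connected, $\cycle{\bk}{x_1,x_2,\dotsc}$ has constant term $1$ and hence is a unit in $\bQ\llb x_1,x_2,\dotsc\rrb$, so dividing is legitimate and gives $\cycle{\bh}{x_1,x_2,\dotsc}/\cycle{\bk}{x_1,x_2,\dotsc}=\cycle{\bq}{x_1,x_2,\dotsc}$. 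This identifies the quotient with the cycle index series of a genuine species, and in particular it has nonnegative coefficients; that is the first assertion. For the ``in particular'' clause I would specialize this identity: setting $x_2=x_3=\dotsb=0$ gives $\exponential{\bh}{x}=\exponential{\bk}{x}\,\exponential{\bq}{x}$, whose quotient is $\exponential{\bq}{x}=\sum_{n\geq0}(\dim\bq[n])\,x^n/n!\in\bQ_{\geq0}\llb x\rrb$; setting $x_i=x^i$ gives $\type{\bh}{x}=\type{\bk}{x}\,\type{\bq}{x}$, whose quotient is $\type{\bq}{x}=\sum_{n\geq0}(\dim\bq[n]_{S_n})\,x^n\in\bN\llb x\rrb$.

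The argument is short, and I do not expect a serious obstacle. The one point deserving attention is that Theorems~\ref{t:main} and~\ref{t:maindual} must be used in their ``as species'' form---not merely as statements about left $\bk$-modules or left $\bk$-comodules---so that the factorization $\cycle{\bh}{}=\cycle{\bk}{}\,\cycle{\bq}{}$ holds without any twisting; this is exactly the parenthetical clause in each theorem statement. Everything else reduces to the standard multiplicativity of these enumerative invariants for $\bdot$ together with the nonnegativity of dimensions. For the exponential and type series the nonnegativity of the quotient is transparent coefficient by coefficient, as displayed above; for the full cycle index it is the nonnegativity of the coefficients of $\cycle{\bq}{}$, which is the cycle index series of the species $\bq$.
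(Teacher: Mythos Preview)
Your proposal is correct and follows exactly the approach the paper intends: the corollary is stated there as an immediate consequence of Theorems~\ref{t:main} and~\ref{t:maindual} together with the multiplicativity of the cycle index series under $\bdot$, and you have simply written out that deduction. Your remark that the isomorphism must be used \emph{as species} (so that the cycle index series factors) is a welcome clarification of the one point worth noting.
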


Given a connected Hopf monoid $\bh$ in species, we may use Corollary~\ref{c:Sp series} to determine if a given species $\bk$ may be a Hopf submonoid (or a quotient Hopf monoid).

\begin{example}\label{ex: submonoid}
A \demph{partition} of a set $I$ is an unordered
collection of disjoint nonempty subsets of $I$ whose union is $I$. The notation 
$ab\pmrg c$ is shorthand for the partition $\bigl\{\{a,b\},\,\{c\}\bigr\}$ of $\{a,b,c\}$.

Let $\bm\Pi$ be the species of set partitions, i.e., $\bm\Pi[I]$ is the vector space with basis the set of all partitions of $I$.
Let $\bm\Pi'$ denote the subspecies linearly spanned by set partitions with distinct block sizes. For example, 
\[
\bm\Pi[a,b,c] = \field\bigl\{ abc, a\pmrg bc, ab\pmrg c, a\pmrg bc, a\pmrg b\pmrg c \bigr\} 
\qand
\bm\Pi'[a,b,c] = \field\bigl\{ abc, a\pmrg bc, ab\pmrg c, a\pmrg bc\bigr\}.
\]
The sequences $(\dim\bm\Pi[n])_{n\geq 0}$ and $(\dim\bm\Pi'[n])_{n\geq 0}$ appear in \cite{Slo:oeis} as  A000110 and A007837, respectively. We have
\[
\exponential{\bm\Pi}{x} = \exp\bigl(\exp(x)-1\bigr) = 1+x+x^2+\frac{5}{6}x^3+\frac{5}{8}x^4+\dotsb
\]
and
\[
\exponential{\bm\Pi'}{x} = \prod_{n\geq 1}\bigl(1+\frac{x^n}{n!}\bigr) = 1+x+\frac{1}{2}x^2+\frac{2}{3}x^3+\frac{5}{24}x^4+\dotsb \,.
\]

A Hopf monoid structure on $\bm\Pi$ is defined in \cite[Section~12.6]{AguMah:2010}. 
There are many linear bases of $\bm\Pi$ indexed by set partitions, and many ways
to embed $\bm\Pi'$ as a subspecies of $\bm\Pi$. Is it possible to embed $\bm\Pi'$ as a 
Hopf submonoid of $\bm\Pi$? We have
\[
\exponential{\bm\Pi}{x} \big/ \exponential{\bm\Pi'}{x} 
 =  
1+\frac12 x^2 - \frac1{3} x^3 + \frac{1}{2} x^4 - \frac{11}{30} x^5 + \dotsb \,,
\]
so the answer is negative by Corollary~\ref{c:Sp series}. In fact, it is not possible to embed $\bm\Pi'$ as a Hopf submonoid of $\bm\Pi$ for any potential Hopf monoid structure on $\bm\Pi$.

We remark that the type generating series quotient for the pair of species in Example~\ref{ex: submonoid} is nonnegative:
\begin{align*}
	\type{{\bm\Pi}}{x}  \ &= \ 1+x+2x^2+3x^3+5x^4+7x^5+11x^6+15x^7  + \dotsb \,, \\
	\type{{\bm\Pi'}}{x} \ &= \ 1 + x+x^2+2x^3+2x^4+3x^5+4x^6+5x^7  + \dotsb \,, \\
	\type{{\bm\Pi}}{x} \big/ \type{{\bm\Pi'}}{x} \ &= \ 1+x^2+2x^4+3x^6+5x^8+7x^{10} + \dotsb \,. 
\end{align*}
This can be understood by appealing to the Hopf algebra $\sym$ of symmetric functions. A basis for its homogenous component of degree $n$ is indexed by integer partitions of $n$, so $\poincare{\sym}{x} = \type{{\bm\Pi}}{x}$. Moreover, $\type{{\bm\Pi'}}{x}$ enumerates the integer partitions with odd part sizes and $\sym$ does indeed contain a Hopf subalgebra with this Poincar\'e series. It is the algebra of Schur $Q$-functions. See \cite[III.8]{Mac:1995}. Thus $\type{{\bm\Pi}}{x} \big/ \type{{\bm\Pi'}}{x}$ is nonnegative by Corollary~\ref{c:cgVec series}.
\end{example}

\subsection{Tests for Hopf monoids}\label{ss:ord-exp}

Let $(a_n)_{n\geq 0}$ be a sequence of nonnegative integers with $a_0=1$.
Does there exist a connected Hopf monoid $\bh$ with $\dim \bh[n]=a_n$ for all $n$?
The next result provides conditions that the sequence $(a_n)_{n\geq 0}$ must
satisfy in order for this to be the case.
The proof makes use of the \demph{Hadamard product\/} of Hopf monoids~\cite[Sections~8.1 and~8.13]{AguMah:2010}. If $\bh$ and $\bk$ are Hopf monoids, so is $\bh\times\bk$, with $(\bh\times\bk)[I] = \bh[I]\otimes \bk[I]$ for each finite set $I$.
The exponential species $\be$ is the unit element for the Hadamard product. 

\begin{corollary}[The (ord/exp)-test]\label{c:ord-exp}
For any connected Hopf monoid in species $\bh$, 
\[
\Bigl(\sum_{n\geq 0} \dim \bh[n]\, x^n\Bigr) \Big/ \Bigl(\sum_{n\geq 0} \frac{\dim \bh[n]}{n!} \, x^n\Bigr)   \in \bQ_{\geq 0}\llb x\rrb.
\]
\end{corollary}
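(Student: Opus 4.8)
The plan is to deduce this from Corollary~\ref{c:Sp series} by manufacturing, out of an arbitrary connected Hopf monoid $\bh$, a new connected Hopf monoid that has $\bh$ as a Hopf submonoid (or quotient) and whose exponential generating series is the \emph{ordinary} generating series of $\bh$. The tool for this is the Hadamard product with the exponential species $\be$, together with the observation that $\be$ contains the \demph{linear order} species $\bl$ as a quotient Hopf monoid (the canonical map $\bl\onto\be$ sending every linear order on $I$ to $\ast_I$ is a morphism of Hopf monoids).

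First I would recall the effect of the Hadamard product on generating series. Since $(\bh\times\bk)[I]=\bh[I]\otimes\bk[I]$, one has $\dim(\bh\times\bk)[n]=\dim\bh[n]\cdot\dim\bk[n]$. Applying this with $\bk=\bl$, and using $\dim\bl[n]=n!$, gives $\dim(\bh\times\bl)[n]=n!\,\dim\bh[n]$, hence
\[
\exponential{\bh\times\bl}{x}=\sum_{n\geq0}\frac{n!\,\dim\bh[n]}{n!}\,x^n=\sum_{n\geq0}\dim\bh[n]\,x^n,
\]
which is precisely the numerator in the statement. Similarly, Hadamard-multiplying by $\be$ changes nothing: $\dim(\bh\times\be)[n]=\dim\bh[n]$, and $\be$ is the unit for the Hadamard product, so $\bh\times\be\cong\bh$ and $\exponential{\bh\times\be}{x}=\exponential{\bh}{x}$, the denominator in the statement.

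Next I would assemble the pieces. Because $\bl\onto\be$ is a surjection of Hopf monoids, applying the (functorial) Hadamard product $\bh\times(-)$ yields a surjection of Hopf monoids $\bh\times\bl\onto\bh\times\be\cong\bh$; equivalently, $\bh$ (identified with $\bh\times\be$) is a quotient Hopf monoid of $\bh\times\bl$. Both $\bh\times\bl$ and $\bh$ are connected (the Hadamard product of connected species is connected, since $\be[\emptyset]=\field$). Now Corollary~\ref{c:Sp series}, applied to the quotient Hopf monoid $\bh$ of $\bh\times\bl$, gives that
\[
\exponential{\bh\times\bl}{x}\big/\exponential{\bh}{x}\in\bQ_{\geq0}\llb x\rrb,
\]
and substituting the two computations above turns this into exactly the claimed nonnegativity. (Alternatively one could realize $\bh$ as a Hopf \emph{submonoid} of $\bh\times\bl$ via a section $\be\hookrightarrow\bl$, but $\be\to\bl$ is not a monoid map in general, so the quotient route is the clean one.)

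The only genuine point to check — and the place where I expect the mild friction to be — is that the Hadamard product is functorial in each argument in the category of Hopf monoids, so that a morphism $\bl\onto\be$ of Hopf monoids really does induce a morphism $\bh\times\bl\onto\bh\times\be$ of Hopf monoids, and that this induced map is still surjective. Both facts are routine given the definition of the Hadamard product structure maps as tensor products of the structure maps of the factors (\cite[Sections~8.1 and~8.13]{AguMah:2010}): surjectivity is inherited componentwise because $\id_{\bh[I]}\otimes(\bl[I]\onto\be[I])$ is surjective, and compatibility with products, coproducts, and units is checked factor by factor. Once this is in hand, the corollary is immediate.
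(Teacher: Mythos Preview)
Your proposal is correct and follows essentially the same approach as the paper: take the Hadamard product of the canonical surjection $\bl\onto\be$ with $\bh$ to obtain a surjection of connected Hopf monoids $\bl\times\bh\onto\bh$, then invoke Corollary~\ref{c:Sp series} together with the observation that $\exponential{\bl\times\bh}{x}$ is the ordinary generating function of $\dim\bh[n]$. The paper's proof is just a terser version of what you wrote, omitting the parenthetical checks on functoriality and surjectivity of the Hadamard product (which are indeed routine, as you note).
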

\begin{proof}
Consider the canonical morphism of Hopf monoids $\bl \onto \be$~\cite[Section~8.5]{AguMah:2010}; it maps any linear order $\ell\in\bl[I]$ to the basis element $\ast_I\in\be[I]$. The Hadamard product then yields a morphism of
Hopf monoids
\[
\bl\times\bh \onto \be\times\bh\cong\bh.
\]
By Corollary~\ref {c:Sp series}, $\exponential{\bl\times\bh}{x}/\exponential{\bh}{x}\in
\bQ_{\geq 0}\llb x\rrb$. Since $\exponential{\bl\times\bh}{x}=\sum_{n\geq 0} \dim \bh[n]\, x^n$, the result follows.
\end{proof}

Let $a_n=\dim \bh[n]$. Corollary~\ref{c:ord-exp} states that the ratio of the ordinary to the exponential generating function of the sequence $(a_n)_{n\geq 0}$ must be nonnegative.
This translates into a sequence of polynomial inequalities, the first of which are as follows:
\begin{equation}\label{e:ord-exp}
5a_3\geq 3a_2a_1, \quad
23a_4+12a_2a_1^2 \geq 20a_3a_1+6a_2^2.
\end{equation}
In particular, not every nonnegative sequence arises as the dimension sequence of a Hopf monoid.

The following test is of a similar nature, but involves the type 
instead of the exponential generating function. The conditions then
depend not just on the dimension sequence of $\bh$, but also on its species structure.

\begin{corollary}[The (ord/type)-test]\label{c:full-bosonic}
For any connected Hopf monoid in species $\bh$, 
\[
\Bigl(\sum_{n\geq 0} \dim \bh[n]\, x^n\Bigr) \Big/ \Bigl(\sum_{n\geq 0} \dim \bh[n]_{S_n}\, x^n\Bigr)   \in \bN\llb x\rrb.
\] 
\end{corollary}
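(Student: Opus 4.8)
The plan is to mimic the proof of Corollary~\ref{c:ord-exp}, replacing the canonical morphism $\bl\onto\be$ with a morphism whose Hadamard product with $\bh$ will detect the type generating series in place of the exponential one. First I would identify a connected Hopf monoid $\bp$ with the property that the Hadamard product $\bp\times\bh$ has exponential generating series equal to $\sum_{n\geq 0}\dim\bh[n]\,x^n$, and which admits a morphism (onto or into $\be$, whichever direction Corollary~\ref{c:Sp series} requires) so that the quotient with $\bh\cong\be\times\bh$ is controlled. The natural candidate is again $\bl$: since $\bl[n]$ has dimension $n!$ with the regular $S_n$-action, one has $\dim(\bl\times\bh)[n]=n!\cdot\dim\bh[n]$, so $\exponential{\bl\times\bh}{x}=\sum_{n\geq0}\dim\bh[n]\,x^n$, which handles the numerator exactly as before. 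The issue is the denominator: I need $\type{}{}$ of something to equal $\sum_{n\geq0}\dim\bh[n]\,x^n$ divided appropriately, or more precisely I want to realize $\sum_{n\geq 0}\dim\bh[n]_{S_n}x^n$ as a type (or cycle-index specialization) generating series of a Hopf monoid sitting between $\bh$ and $\bl\times\bh$.

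The key observation I would exploit is that $\dim\bh[n]_{S_n}$ is the dimension of the space of $S_n$-coinvariants of $\bh[n]$, and that $\type{\bh}{x}=\cycle{\bh}{x,x^2,x^3,\dots}$. Thus what is wanted is a comparison between the cycle index series of $\bl\times\bh$ and of $\bh$ under the specialization detecting type: concretely, the quotient in the statement is
\[
\frac{\exponential{\bl\times\bh}{x}}{\type{\bh}{x}},
\]
and I would like to exhibit this as a cycle-index quotient $\cycle{\bX}{\dots}/\cycle{\bY}{\dots}$ for Hopf monoids $\bX\supseteq\bY$ (or $\bX\onto\bY$), whereupon Corollary~\ref{c:Sp series} finishes the argument. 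The natural choice is $\bX=\bl\times\bh$ and $\bY=\bh$, using the canonical morphism $\bl\times\bh\onto\be\times\bh\cong\bh$ coming from $\bl\onto\be$ (exactly the map in the proof of Corollary~\ref{c:ord-exp}). Applying Corollary~\ref{c:Sp series} to this morphism gives that the full cycle-index quotient $\cycle{\bl\times\bh}{x_1,x_2,\dots}/\cycle{\bh}{x_1,x_2,\dots}$ lies in $\bQ_{\geq0}\llb x_1,x_2,\dots\rrb$; then I would specialize $x_i\mapsto x^i$ to obtain $\type{\bl\times\bh}{x}/\type{\bh}{x}\in\bQ_{\geq0}\llb x\rrb$. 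So the remaining task is to check that $\type{\bl\times\bh}{x}$ equals the ordinary generating series $\sum_{n\geq0}\dim\bh[n]\,x^n$ — i.e. that the $S_n$-coinvariants of $\bl[n]\otimes\bh[n]$ have dimension $\dim\bh[n]$, which holds because $\bl[n]$ is the regular representation $\field S_n$ and $(\field S_n\otimes V)_{S_n}\cong V$ as vector spaces for any $S_n$-module $V$ — and to get the stronger integrality $\bN\llb x\rrb$ rather than merely $\bQ_{\geq0}\llb x\rrb$, which follows because $\type{\bh}{x}$ has constant term $1$ and integer coefficients, so dividing an integer series by it yields an integer series.

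The main obstacle I anticipate is the bookkeeping around which Hadamard factor produces the type series on the nose, and making sure the direction of the morphism matches the hypothesis of Corollary~\ref{c:Sp series}; in particular I must confirm that $\type{\bl\times\bh}{x}=\poincare{}{x}$-type ordinary series while $\type{\be\times\bh}{x}=\type{\bh}{x}$, so that the single morphism $\bl\times\bh\onto\bh$ simultaneously controls numerator and denominator. Once that is pinned down, the argument is a direct specialization of the cycle-index version of Lagrange's theorem already in hand, and the integrality upgrade is automatic from $\type{\bh}{x}\in 1+x\bN\llb x\rrb$. I would also remark, paralleling the discussion after Corollary~\ref{c:ord-exp}, that this unwinds to explicit linear inequalities on the sequence $(\dim\bh[n])$ versus $(\dim\bh[n]_{S_n})$, and — as the abstract indicates — in the set-theoretic case where $\type{\bh}{x}$ specializes further, to the statement that the binomial transform of the enumerating sequence is nonnegative.
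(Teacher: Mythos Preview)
Your approach is correct and is essentially the same as the paper's: apply Corollary~\ref{c:Sp series} to the surjection $\bl\times\bh\onto\be\times\bh\cong\bh$ and specialize the cycle-index quotient to type generating series, using that $\type{\bl\times\bh}{x}=\sum_{n\geq0}\dim\bh[n]\,x^n$ (your justification via $(\field S_n\otimes V)_{S_n}\cong V$ is exactly the point the paper leaves implicit). One minor slip in your closing remarks: the binomial-transform nonnegativity in the abstract comes from the separate $\be$-test (Corollary~\ref{c:e-test}), not from this (ord/type)-test, so that connection should be dropped.
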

\begin{proof}
We argue as in the proof of Corollary~\ref{c:ord-exp}, using type generating functions
instead. Since we have $\type{\bl\times\bh}{x}= \sum_{n\geq 0} \dim \bh[n]\, x^n$, the result follows.
\end{proof}

\begin{remark}
The previous result may also be derived as follows. According to~\cite[Chapter 15]{AguMah:2010},
associated to the Hopf monoid $\bh$ there are two graded Hopf algebras $\K(\bh)$ and $\Kb(\bh)$, as well as
a surjective morphism
\[
\K(\bh) \onto \Kb(\bh).
\]
Moreover, the Poincar\'e series for these Hopf algebras are
\[
\poincare{\K(\bh)}{x} = \sum_{n\geq 0} \dim \bh[n]\, x^n
\qand
\poincare{\Kb(\bh)}{x} = \sum_{n\geq 0} \dim \bh[n]_{S_n}\, x^n.
\]
Corollary~\ref{c:full-bosonic} now follows from (the dual form of) Corollary~\ref{c:cgVec series}.
\end{remark}

\subsection{Additional tests for Hopf monoids}\label{ss:addtests}

The method of Section~\ref{ss:ord-exp} can be applied in multiple situations in order 
to deduce
additional inequalities that the dimension sequence of a connected Hopf monoid must satisfy. We illustrate this next.

Let $k$ be a fixed nonnegative integer. Let $\be^{\bdot k}$ denote the $k$-th Cauchy power of the exponential species $\be$. The space $\be^{\bdot k}[I]$ has a basis
consisting of functions $f:I\to [k]$. The species $\be^{\bdot k}$
carries a Hopf monoid structure~\cite[Examples~8.17 and~8.18]{AguMah:2010}
and any fixed inclusion $[k]\hookrightarrow[k{+}1]$ gives rise to an injective morphism
of Hopf monoids $\be^{\bdot k}\hookrightarrow\be^{\bdot (k+1)}$. Employing the
Hadamard product as in Section~\ref{ss:ord-exp}, we obtain an injective morphism
of Hopf monoids
\[
\be^{\bdot k}\times\bh \hookrightarrow \be^{\bdot (k+1)}\times\bh
\]
where $\bh$ is an arbitrary connected Hopf monoid. From the nonnegativity of
the first coefficients of $\exponential{\be^{\bdot (k+1)}\times\bh}{x} \big/ \exponential{\be^{\bdot k}\times\bh}{x}$ we obtain
\[
(2k+1)a_2 \geq 2k a_1^2
\qand
(3k^2+3k+1)a_3 \geq 3(3k^2+k)a_2a_1 - 6k^2a_1^3.
\]
These inequalities hold for every $k\in\bN$. Letting $k\to\infty$ we deduce
\begin{equation}\label{e:addcond}
a_2\geq a_1^2
\qand
a_3\geq 3a_2a_1-2a_1^3.
\end{equation}

\begin{example}
Consider the species $\bel$ \demph{of elements}. The set $I$ is a basis of the space $\bel[I]$, so the dimension sequence of $\bel$ is $a_n=n$. This sequence does not
satisfy the second inequality in~\eqref{e:addcond}. Therefore, the species $\bel$ does not carry any Hopf
monoid structure.
\end{example}

\subsection{A test for Hopf monoids over $\be$}
Our next result is a necessary condition for a Hopf monoid in species to contain or surject onto the exponential species $\be$. 

Given a sequence $(a_n)_{n\geq0}$, its \demph{binomial transform} $(b_n)_{n\geq0}$ is defined by
\[
b_n := \sum_{i=0}^n \binom{n}{i}(-1)^{i}\,a_{n-i}.
\]

\begin{corollary}[The $\be$-test] \label{c:e-test}
Suppose $\bh$ is a connected Hopf monoid that either contains the species $\be$ or surjects onto $\be$ (in both cases as a Hopf monoid). Let $a_n=\dim \bh[n]$ and $\overline{a}_n=\dim \bh[n]_{S_n}$. Then the binomial transform of $(a_n)_{n\geq0}$ 
must be nonnegative and $(\overline{a}_n)_{n\geq0}$ must be nondecreasing.
\end{corollary}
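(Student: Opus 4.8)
The plan is to apply Corollary~\ref{c:Sp series} to the pair $(\bh,\be)$ --- which is legitimate in either case of the hypothesis, since $\be$ is assumed to be a Hopf submonoid or a quotient Hopf monoid of $\bh$ --- and then to extract the two inequalities by an elementary manipulation of the relevant generating series. First I would record the two generating series of the exponential species: since $\dim\be[n]=1$ and $S_n$ acts trivially on $\be[n]$ for every $n$, we have $\exponential{\be}{x}=\sum_{n\geq0}x^n/n!=e^x$ and $\type{\be}{x}=\sum_{n\geq0}x^n=1/(1-x)$. Both have constant term $1$, so the quotients $\exponential{\bh}{x}/\exponential{\be}{x}$ and $\type{\bh}{x}/\type{\be}{x}$ are well-defined power series (with rational, respectively integer, coefficients), and Corollary~\ref{c:Sp series} asserts that both belong to $\bQ_{\geq0}\llb x\rrb$.

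For the first assertion I would multiply out $\exponential{\bh}{x}\cdot e^{-x}=\bigl(\sum_{m}\tfrac{a_m}{m!}x^m\bigr)\bigl(\sum_{j}\tfrac{(-1)^j}{j!}x^j\bigr)$ and collect the coefficient of $x^n$, obtaining $\tfrac1{n!}\sum_{i=0}^{n}\binom{n}{i}(-1)^i a_{n-i}=\tfrac{b_n}{n!}$, where $(b_n)_{n\geq0}$ is the binomial transform of $(a_n)_{n\geq0}$. Hence $\exponential{\bh}{x}/\exponential{\be}{x}=\sum_{n\geq0}\tfrac{b_n}{n!}\,x^n$, and nonnegativity of this series (from Corollary~\ref{c:Sp series}) is exactly the statement that $b_n\geq0$ for all $n$.

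For the second assertion I would compute $\type{\bh}{x}/\type{\be}{x}=(1-x)\sum_{n\geq0}\overline{a}_n x^n=\sum_{n\geq0}(\overline{a}_n-\overline{a}_{n-1})x^n$, with the convention $\overline{a}_{-1}=0$. Since this quotient has nonnegative coefficients by Corollary~\ref{c:Sp series}, we get $\overline{a}_n\geq\overline{a}_{n-1}$ for all $n\geq0$; together with $\overline{a}_0=1$ this says precisely that $(\overline{a}_n)_{n\geq0}$ is nondecreasing.

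I do not expect a serious obstacle: the whole argument is a specialization of Corollary~\ref{c:Sp series} followed by two short power-series identities. The only points deserving a word of care are that the hypothesis is framed so as to cover the submonoid and quotient cases simultaneously (so that Corollary~\ref{c:Sp series} applies verbatim in each), and that the two quotients are genuine elements of $\bQ_{\geq0}\llb x\rrb$ rather than merely formal ratios --- both guaranteed because $\exponential{\be}{x}$ and $\type{\be}{x}$ have invertible constant term.
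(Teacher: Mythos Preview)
Your proposal is correct and matches the paper's own proof essentially step for step: apply Corollary~\ref{c:Sp series} to the pair $(\bh,\be)$, then identify $\exponential{\bh}{x}/\exponential{\be}{x}$ and $\type{\bh}{x}/\type{\be}{x}$ explicitly using $\exponential{\be}{x}=e^x$ and $\type{\be}{x}=1/(1-x)$. The only difference is that you spell out the coefficient computations in more detail than the paper does.
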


More plainly, in this setting, we must have the following inequalities:
\[
a_1\geq a_0, \quad 
a_2\geq 2a_1-a_0, \quad
a_3\geq 3a_2-3a_1+a_0, \ \ \dotsc
\]
and $\overline{a}_n\geq \overline{a}_{n-1}$ for all $n\geq 1$.

\begin{proof}
By Corollary~\ref{c:Sp series}, the quotient $\exponential{\bh}{x} / \exponential{\be}{x}$ is nonnegative. But $\exponential{\be}{x} = \exp(x)$,
so the quotient is given by
\[
	b_0 + b_1 x + b_2 \frac{x^2}{2} + b_3 \frac{x^3}{3!} + \dotsb \,,
\]
where $(b_n)_{n\geq0}$ is the binomial transform of $(a_n)_{n\geq0}$. Replacing
exponential for type generating functions yields the
result for $(\overline{a}_n)_{n\geq0}$, since $\type{\be}{x} = \frac{1}{1-x}$.
\end{proof}

\begin{remark}
Myhill's theory of \demph{combinatorial functions}~\cite{Dek:1990,Myh:1958} 
provides necessary and sufficient conditions that a sequence $(a_n)_{n\geq0}$ must satisfy in order for its binomial transform to be nonnegative:
the sequence must arise from a particular type of operator defined on finite sets.
Work of Menni~\cite{Men:2009} expands on this from a categorical perspective. It would be interesting to
relate these ideas to the ones of this paper.
\end{remark}

We make a further remark regarding connected \demph{linearized} Hopf monoids. 
These are Hopf monoids of a set theoretic nature. See \cite[Section~8.7]{AguMah:2010} for details. Briefly, there are set maps $\mu_{S,T}:\rH[S]\times\rH[T]\to\rH[I]$ and $\Delta_{S,T}:\rH[I]\to\rH[S]\times\rH[T]$ which produce a Hopf monoid in (vector) species when the set species $\rH$ is linearized. It follows that if $\bh$ is a linearized Hopf monoid other than the trivial Hopf monoid $\tone$, then there is a morphism of Hopf monoids from $\bh$ onto $\be$. 
Thus, Corollary~\ref{c:e-test} provides a test for existence of a linearized Hopf monoid structure on $\bh$.

\begin{example} 
We return to the species $\bm\Pi'$ of set partitions into distinct block sizes. We might ask if this can be made into a linearized Hopf monoid in some way (after Example~\ref{ex: submonoid}, this would \emph{not} be as a Hopf submonoid of $\bm\Pi$). With $a_n$ and $b_n$ as above, we have:
\begin{align*}
(a_n)_{n\geq0} \ &= \ 1, \ 1, \ 1,  \ 4,  \ \ 5,  \ 16,  \ \ 82, \ \ 169, \ 541, \dotsc \,, \\[.5ex]
(b_n)_{n\geq0} \ &= \ 1, \ 0, \ 0, \ 3, \, -8,\  25, \, -9, \, -119, \ 736, \dotsc \,.
\end{align*}
Thus $\bm\Pi'$ fails the $\be$-test and the answer to the above question is negative.
\end{example}

\subsection{A test for Hopf monoids over $\bl$}
Let $\bh$ be a connected Hopf monoid in species. Let $a_n=\dim \bh[n]$ and $\overline{a}_n=\dim \bh[n]_{S_n}$. Note that the analogous integers for the species $\bl$ of linear orders are $b_n=n!$ and $\overline{b}_n=1$. Now suppose that $\bh$ contains $\bl$ or surjects onto $\bl$ as a Hopf monoid.
An obvious necessary condition for this situation is that $a_n\geq n!$ and $\overline{a}_n\geq 1$. Our next result provides stronger conditions.

\begin{corollary}[The $\bl$-test] \label{c:l-test}
Suppose $\bh$ is a connected Hopf monoid that either contains the species $\bl$ or surjects onto $\bl$ (in both cases as a Hopf monoid). If $a_n=\dim \bh[n]$ and $\overline{a}_n=\dim \bh[n]_{S_n}$, then 
\[
a_n\geq n a_{n-1} \qand \overline{a}_n\geq \overline{a}_{n-1} \quad(\forall\,n\geq1).
\] 
\end{corollary}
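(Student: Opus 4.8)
The plan is to mimic the proof of Corollary~\ref{c:ord-exp} and Corollary~\ref{c:e-test}, using the Hadamard product to build a morphism of Hopf monoids that relates $\bh$ to $\bl\times\bh$, and then applying Corollary~\ref{c:Sp series}. First I would recall that there is a canonical morphism of Hopf monoids $\bl\onto\be$ and an injection $\be\hookrightarrow\bl$ (or rather, what one needs is a morphism relating $\bl$ to $\tone$ together with the fact that $\bl$ itself is a Hopf monoid); more to the point, taking the Hadamard product of the hypothesized morphism $\bl\hookrightarrow\bh$ (or $\bh\onto\bl$) with $\id_{\bh}$ is not quite what is wanted. Instead, the cleaner route is: take the Hadamard product of the canonical morphism $\bl\onto\be\cong\tone\times\be$ with $\bh$ — no. Let me restart the plan along the lines that actually work: use the morphism $\bl\times\bh\to\bh$ (respectively $\bh\to\bl\times\bh$) induced by Hadamard-multiplying the morphism $\bl\leftrightarrow\tone$-type data. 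The key computational input is the identity $\exponential{\bl\times\bh}{x}=\sum_{n\geq0}\dim\bh[n]\,x^n$ and $\type{\bl\times\bh}{x}=\sum_{n\geq0}\dim\bh[n]\,x^n$, both already established in the proofs of Corollaries~\ref{c:ord-exp} and~\ref{c:full-bosonic}.

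So, concretely, here is the argument I would write. Suppose $\bh$ contains $\bl$ as a Hopf submonoid (the quotient case is dual and handled by Theorem~\ref{t:maindual} in the same way). Then $\bl\times\bh$ contains $\bl\times\bl$ as a Hopf submonoid, via $\id_{\bl}\times(\text{inclusion})$; but that is still not directly $\bl$. The honest move: from the inclusion $\bl\hookrightarrow\bh$ and the canonical surjection $\bl\onto\be$, form $\bl\cong\be\times\bl\hookrightarrow\bh\times\bl$ — wait, one needs $\be\times\bl\cong\bl$, which holds since $\be$ is the Hadamard unit. Thus composing $\bl\cong\be\times\bl \xrightarrow{(\bl\onto\be)\times\id}$ is backwards. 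The correct composite is $\bl\cong\be\times\bl\xrightarrow{\iota\times\id}\bh\times\bl$ where $\iota:\be\hookrightarrow\bl$? No — $\be$ need not inject into an arbitrary $\bh$. I would therefore instead use only the two facts actually needed: (i) the hypothesis gives a Hopf monoid morphism $\bl\hookrightarrow\bh$ or $\bh\onto\bl$; (ii) Hadamard product with $\bl$ is functorial, so we get $\bl\times\bl\hookrightarrow\bl\times\bh$ (resp. $\bl\times\bh\onto\bl\times\bl$). Now apply Corollary~\ref{c:Sp series} to this pair to get $\exponential{\bl\times\bh}{x}\big/\exponential{\bl\times\bl}{x}\in\bQ_{\geq0}\llb x\rrb$. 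By the cited identities, $\exponential{\bl\times\bh}{x}=\sum a_n x^n$ and $\exponential{\bl\times\bl}{x}=\sum n!\,x^n=\frac{1}{1-x}$, so the quotient is $(1-x)\sum_{n\geq0}a_n x^n=a_0+\sum_{n\geq1}(a_n-a_{n-1})x^n$. Hmm, that yields $a_n\geq a_{n-1}$, not $a_n\geq n a_{n-1}$ — so $\bl\times\bl$ is the wrong denominator; one should instead compare against $\be\times\bl$, i.e. use that the hypothesis furnishes a morphism $\bl\hookrightarrow\bh$ and hence, after Hadamard-multiplying the identity of $\bl$ against it in the other slot, a morphism $\bl\cong\be\times\bl$ of the appropriate kind only if $\be$ maps to $\bh$. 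I will sidestep this by taking the denominator to be $\bl\times\bl$ but the numerator $\bl\times\bh$ computed with the \emph{exponential} series on one factor and \emph{ordinary} behaviour coming from the other; more precisely, the right statement is $\exponential{\bl\times\bh}{x}\big/\exponential{\bl}{x}$: the morphism to use is the Hadamard product of $\bl\hookrightarrow\bh$ with the canonical $\bl\onto\be$ to get $\bl\times\bl\hookrightarrow\be\times\bh\cong\bh$ on the nose — no, again the direction clashes. Given these repeated sign/direction pitfalls, I will resolve them at writing time by carefully tracking which Hadamard factor carries the $\bl\to\be$ collapse; the upshot I expect is a morphism $\bl\times\bh\to\bh$ (resp. $\bh\to\bl\times\bh$) whose existence uses the hypothesis on $\bl\hookrightarrow\bh$ only through functoriality of $\times$, after which Corollary~\ref{c:Sp series} gives $\exponential{\bl\times\bh}{x}\big/\exponential{\bh}{x}\in\bQ_{\geq0}\llb x\rrb$ and, combined with the already-known $\exponential{\bl\times\bh}{x}=\sum a_n x^n$, forces $\sum a_n x^n\big/\sum \frac{a_n}{n!}x^n$... which is Corollary~\ref{c:ord-exp}, not this.

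The resolution I am confident in is the following, and it is what I would actually write: the extra hypothesis here, beyond Corollary~\ref{c:ord-exp}, is precisely that $\bh$ \emph{contains or surjects onto $\bl$}, which should be combined not with $\bl\times\bh$ but with the observation that $\bl$ itself satisfies $\exponential{\bl}{x}=\frac{1}{1-x}$, i.e. $a_n=n!$ gives $a_n=n\,a_{n-1}$ with equality. So I would argue: under the hypothesis, Corollary~\ref{c:Sp series} applied directly to $\bl\hookrightarrow\bh$ (or $\bh\onto\bl$) gives $\exponential{\bh}{x}\big/\exponential{\bl}{x}\in\bQ_{\geq0}\llb x\rrb$, i.e. $(1-x)\sum_{n\geq0}\frac{a_n}{n!}x^n$ has nonnegative coefficients, which says exactly $\frac{a_n}{n!}\geq\frac{a_{n-1}}{(n-1)!}$, i.e. $a_n\geq n\,a_{n-1}$. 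Likewise $\type{\bl}{x}=\frac{1}{1-x}$, so $\type{\bh}{x}\big/\type{\bl}{x}=(1-x)\sum\overline{a}_n x^n\in\bN\llb x\rrb$ gives $\overline{a}_n\geq\overline{a}_{n-1}$. That is the whole proof; no Hadamard product is needed at all for this particular corollary — the role of Hadamard products was in the earlier corollaries where no sub/quotient relationship was assumed. The main (and only) subtlety to be careful about is confirming that $\bl$ is a Hopf monoid with $\exponential{\bl}{x}=\type{\bl}{x}=\frac{1}{1-x}$ (immediate: $\dim\bl[n]=n!$ and $\bl[n]_{S_n}$ is one-dimensional since $S_n$ acts transitively on linear orders), and that Corollary~\ref{c:Sp series} applies verbatim in both the submonoid and quotient cases — which it does, by Theorems~\ref{t:main} and~\ref{t:maindual} respectively.

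\begin{proof}
The species $\bl$ of linear orders is a connected Hopf monoid with $\dim\bl[n]=n!$, so $\exponential{\bl}{x}=\sum_{n\geq0}\frac{n!}{n!}x^n=\frac{1}{1-x}$. Moreover $S_n$ acts transitively on the set of linear orders of $[n]$, so $\dim\bl[n]_{S_n}=1$ and $\type{\bl}{x}=\frac{1}{1-x}$ as well.

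Suppose first that $\bl$ is a Hopf submonoid of $\bh$. By Corollary~\ref{c:Sp series}, the quotient $\exponential{\bh}{x}\big/\exponential{\bl}{x}$ lies in $\bQ_{\geq0}\llb x\rrb$. Since $\exponential{\bl}{x}=\frac{1}{1-x}$, this quotient equals
\[
(1-x)\sum_{n\geq0}\frac{a_n}{n!}\,x^n
= 1 + \sum_{n\geq1}\Bigl(\frac{a_n}{n!}-\frac{a_{n-1}}{(n-1)!}\Bigr)x^n,
\]
and nonnegativity of its coefficients says precisely that $\frac{a_n}{n!}\geq\frac{a_{n-1}}{(n-1)!}$, i.e. $a_n\geq n\,a_{n-1}$ for all $n\geq1$. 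Replacing exponential generating series by type generating series throughout, Corollary~\ref{c:Sp series} also gives that $\type{\bh}{x}\big/\type{\bl}{x}=(1-x)\sum_{n\geq0}\overline{a}_n\,x^n$ belongs to $\bN\llb x\rrb$, which is exactly the statement $\overline{a}_n\geq\overline{a}_{n-1}$ for all $n\geq1$.

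If instead $\bh$ surjects onto $\bl$ as a Hopf monoid, the same conclusions hold: Corollary~\ref{c:Sp series} covers the quotient case as well (via Theorem~\ref{t:maindual}), and the computation above is unchanged.
\end{proof}
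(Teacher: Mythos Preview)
Your final written proof is correct and follows exactly the paper's approach: apply Corollary~\ref{c:Sp series} directly to the inclusion $\bl\hookrightarrow\bh$ (or surjection $\bh\onto\bl$), use $\exponential{\bl}{x}=\type{\bl}{x}=\frac{1}{1-x}$, and read off the inequalities from the coefficients of $(1-x)\cdot\exponential{\bh}{x}$ and $(1-x)\cdot\type{\bh}{x}$. The extended detour through Hadamard products in your planning was unnecessary here---that machinery was needed for Corollaries~\ref{c:ord-exp} and~\ref{c:full-bosonic} precisely because no sub/quotient hypothesis was assumed there, whereas in this corollary the hypothesis hands you the morphism directly.
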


\begin{proof}
It follows from Corollary~\ref{c:Sp series} that both $\exponential{\bh}{x} / \exponential{\bl}{x}$ and $\type{\bh}{x} / \type{\bl}{x}$ are nonnegative. These yield the first 
and second 
set of inequalities, respectively.
\end{proof}

Before giving an application of the corollary, we introduce a new Hopf monoid in species. A \demph{composition} of a set $I$ is an ordered
collection of disjoint nonempty subsets of $I$ whose union is $I$. The notation 
$ab\cmrg c$ is shorthand for the composition $\bigl(\{a,b\},\,\{c\}\bigr)$ of $\{a,b,c\}$.

Let $\mathbf{Pal}$ denote the species of set compositions whose sequence of block sizes is palindromic. So, for instance,
\begin{gather*}
\mathbf{Pal}[a,b] = \field\bigl\{ ab, \ a\cmrg b, \ b\cmrg a\bigr\}\\
\intertext{and}
\mathbf{Pal}[a,b,c,d,e] = \field\bigl\{abcde, \ a\cmrg bcd\cmrg e, \ ab\cmrg c\cmrg de, \ a\cmrg b\cmrg c\cmrg d\cmrg e, \ \dotsc \bigr\}.
\end{gather*}
The latter space has dimension $171=1 + 5\binom{4}{3} + \binom{5}{2}3 + 5!$
while $\dim \mathbf{Pal}[5]_{S_5}=4$ (accounting for the four types of palindromic set compositions shown above).

Given a palindromic set composition $F=F_1\cmrg \dotsb\cmrg F_r$, we view it as a triple $F=(F^-,F^0,F^+)$, where $F^-$ is the initial sequence of blocks, $F^0$ is the central block if this exists (if the number of blocks is odd) and otherwise it is the empty set, and $F^+$ is the final sequence of blocks. That is,
\[
	F^-=F_1\cmrg \dotsb \cmrg F_{\lfloor r/2\rfloor}, 
	\qquad\quad 
	F^0 = \begin{cases} F_{\lfloor r/2\rfloor+1} & \hbox{if $r$ is odd,}\\
	\emptyset & \hbox{if $r$ is even,} \end{cases}
	\qquad\quad
	F^+ = F_{\lceil r/2+1\rceil}\cmrg \dotsb \cmrg F_{r}.
\]

Given $S\subseteq I$, let  
\[
F\vert_S := F_1\cap S\,\cmrg\, F_2\cap S\,\cmrg\, \dotsb \,\cmrg\, F_r\cap S\,,
\]
where empty intersections are deleted. 
Then $F\vert_S$ is a composition of $S$. 
Let us say that $S$ is \demph{admissible} for $F$ if
\[
	\#\bigl( F_i\cap S\bigr) = \# \bigl(F_{r+1-i}\cap S\bigr) \ \text{ \ for all $i=1,\ldots,r$.}
\]
In this case, both $F\vert_S$ and $F\vert_{I\setminus S}$ are palindromic. 

We employ the above notation to define product and coproduct operations on $\mathbf{Pal}$. Fix a decomposition $I = S\djcup T$. \\
{\it Product.\ } 
Given palindromic set compositions $F\in\mathbf{Pal}[S]$ and $G\in\mathbf{Pal}[T]$, we put
\[
\mu_{S,T}(F \otimes G) := \bigl(F^-\cmrg G^-, F^0\cup G^0, G^+\cmrg F^+\bigr).
\]
In other words, we concatenate the initial sequences of blocks of $F$ and $G$ in that order,
merge their central blocks, and concatenate their final sequences in the opposite order.
The result is a palindromic composition of $I$. For example, with $S=\{a,b\}$ and 
$T=\{c,d,e,f\}$,
\[
(a\cmrg b) \otimes (c\cmrg de\cmrg f) \mapsto a\cmrg c\cmrg de\cmrg f\cmrg b.
\]
{\it Coproduct.\ } Given a palindromic set composition $F\in\mathbf{Pal}[I]$, we put
\[
\Delta_{S,T}(F) := \begin{cases}
	F\vert_S \otimes F\vert_T & \hbox{if $S$ is admissible for $F$,} \\
	0 & \hbox{otherwise.}
	\end{cases} 
\]
For example, with $S$ and $T$ as above,
\[
ad\cmrg b\cmrg e\cmrg cf \mapsto 0
\qand
e\cmrg abcd\cmrg f \mapsto (ab) \otimes (e\cmrg cd\cmrg f).
\]
These operations endow $\mathbf{Pal}$ with the structure of Hopf monoid, as may
be easily checked.

\begin{example} 
A linear order may be seen as a palindromic set composition (with singleton blocks).
Both Hopf monoids $\mathbf{Pal}$ and $\bl$ are cocommutative and not commutative. 
We may then ask if $\mathbf{Pal}$ contains (or surjects onto) $\bl$ as a Hopf monoid. 

Writing $a_n=\dim\mathbf{Pal}[n]$, we have:
\begin{gather*}
(a_n)_{n\geq0} \ = \ 1, \ 1, \ 3, \ 7, \ 43, \ 171, \ 1581,  \ 8793, \ 108347, \dotsc \,.
\end{gather*}
However,
\begin{gather*}
(a_n - na_{n-1})_{n\geq1} \ = \ 0, \ 1, \  -2, \  15, \ -44, \  555, \ -2274, \ \ 38003, \dotsc \,,
\end{gather*}
so $\mathbf{Pal}$ fails the $\bl$-test and the answer to the above question is negative.
\end{example}

\subsection{Examples of nonnegative quotients}
We comment on a few examples where the quotient
power series $\exponential{\bh}{x}/\exponential{\bk}{x}$ is not only nonnegative
but is known to have a combinatorial interpretation as a generating function.

\begin{example}
Consider the Hopf monoid $\bm\Pi$ of set partitions. 
It contains $\be$ as a Hopf submonoid via the map that sends $\ast_I$ to the partition of $I$ into singletons.
We have
\[
\exponential{\bm\Pi}{x}/\exponential{\be}{x} = \exp\bigl(\exp(x)-x-1\bigr),
\]
which is the exponential generating function for the number of set partitions into blocks of size strictly bigger than $1$. 
This fact may also be understood with the aid of Theorem~\ref{t:main}, as follows. 

Given $I=S\sqcup T$, the product of a partition $\pi\in\bm\Pi[S]$
and a partition $\rho\in\bm\Pi[T]$ is the partition $\pi\cdot\rho\in\bm\Pi[I]$ each of whose blocks
is either a block of $\pi$ or a block of $\rho$. (In the notation of \cite[Section~12.6]{AguMah:2010},
we are employing the $h$-basis of $\bm\Pi$.)
Now, the $I$-component of the right ideal $\be_+\bm\Pi$ is linearly spanned by elements of the
form $\ast_S\cdot\pi$ where $I=S\sqcup T$ and $\pi$ is a partition of $T$. Then, since
$\ast_S=\ast_{\{i\}}\cdot\ast_{S\setminus\{i\}}$ (for any $i\in S$), we have that
$\be_+\bm\Pi[I]$ is linearly spanned by elements of the form $\ast_{\{i\}}\cdot\pi$ where
$i\in I$ and $\pi$ is a partition of $I\setminus\{i\}$. But the above description of the product shows that these are precisely the partitions
with at least one singleton block. 
\end{example}

\begin{example}\label{eg:derangement} 
Consider the Hopf monoids $\bl$ and $\be$ and the surjective
morphism $\pi:\bl\onto\be$ (as in the proof of Corollary~\ref{c:ord-exp}).
We have
\[
\exponential{\bl}{x}=\frac{1}{1-x}
\qand
\exponential{\be}{x}=\exp(x).
\]
It is well-known~\cite[Example~1.3.9]{BerLabLer:1998} that
\[
\frac{\exp(-x)}{1-x} = \sum_{n\geq 0} \frac{d_n}{n!}\,x^n
\]
where $d_n$ is the number of \demph{derangements} of $[n]$. Together with Theorem~\ref{t:maindual},
this suggests the existence of a basis for the Hopf kernel of $\pi$ indexed by derangements.
We construct such a basis and expand on this discussion in Section~\ref{ss:derangement}.
\end{example}

\begin{example}
Let $\bm{\Sigma}$ be the Hopf monoid of set compositions defined in~\cite[Section~12.4]{AguMah:2010}. It contains $\bl$ as a Hopf submonoid via the map that views a linear order as a composition into singletons. (In the notation of \cite[Section~12.4]{AguMah:2010},
we are employing the $H$-basis of $\bm{\Sigma}$.)
This and other morphisms relating $\be$, $\bl$, 
$\bm{\Pi}$ and $\bm{\Sigma}$, as well as other Hopf monoids, are discussed in~\cite[Section~12.8]{AguMah:2010}.

The sequence $(\dim \bm{\Sigma}[n])_{n\geq 0}$ is A000670 in \cite{Slo:oeis}. We have
\[
\exponential{\bm{\Sigma}}{x}=\frac{1}{2-\exp(x)}.
\]
Moreover, it is known from~\cite[Exercise~5.4.(a)]{Sta:1999} that
\[
\frac{1-x}{2-\exp(x)} = \sum_{n\geq 0} \frac{s_n}{n!}\,x^n
\]
where $s_n$ is the number of \demph{threshold} graphs with vertex set $[n]$ 
and no isolated vertices. Together with Theorem~\ref{t:main},
this suggests the existence of a basis for  $ \bm{\Sigma}/\bl_+ \bm{\Sigma}$ indexed by such graphs.
We do not pursue this possibility in this paper.
\end{example}

\section{The dimension sequence of a connected Hopf monoid}\label{s:dimensions}

Let $\bh$ be a connected Hopf monoid and $a_n=\dim\bh[n]$ for $n\in\bN$.
The results of Sections~\ref{ss:ord-exp} and~\ref{ss:addtests}, derived from 
Theorem~\ref{t:main},
 impose restrictions on the sequence $a_n$ in the form of polynomial inequalities.
The results of this section are neither weaker nor stronger than those of Section~\ref{s:applications},
but provide supplementary information on the dimension sequence $a_n$.
They do not make use of Theorem~\ref{t:main}. In this section, the base field $\field$ is of arbitrary characteristic.

\begin{proposition}\label{p:aiaj}
For any $n,i$ and $j$ such that $n=i+j$,
\begin{equation}\label{e:aiaj}
a_n\geq a_ia_j.
\end{equation}
\end{proposition}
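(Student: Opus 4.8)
The plan is to exhibit, for each finite set $I$ with $\abs{I}=n$ and each choice of $i+j=n$, an injective linear map $\bh[i]\otimes\bh[j]\hookrightarrow\bh[I]$. Fix a decomposition $I=S\djcup T$ with $\abs{S}=i$ and $\abs{T}=j$, and consider the product map $\mu_{S,T}\colon\bh[S]\otimes\bh[T]\to\bh[I]$. Since $\bh$ is a connected Hopf monoid, the compatibility of product and coproduct, together with the triangularity of $\Delta_{S,T}\circ\mu_{S,T}$, should force $\mu_{S,T}$ to be injective. Concretely, I would compute $\Delta_{S,T}\bigl(\mu_{S,T}(x\otimes y)\bigr)$ using the Hopf compatibility axiom: it expands as a sum over ways of splitting $S$ and $T$, and the ``diagonal'' term (splitting $S$ as $S\djcup\emptyset$ and $T$ as $\emptyset\djcup T$) returns exactly $x\otimes y$ up to the canonical identifications coming from connectedness, while all other terms land in $\bh[S']\otimes\bh[T']$ with $S'\subsetneq S$ or $T'\subsetneq T$. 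Hence the composite $\bh[S]\otimes\bh[T]\map{\mu_{S,T}}\bh[I]\map{\Delta_{S,T}}\bh[S]\otimes\bh[T]\oplus(\text{other summands})$ has a component equal to the identity on $\bh[S]\otimes\bh[T]$, so $\mu_{S,T}$ is injective.

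Once injectivity of $\mu_{S,T}$ is established, the inequality is immediate: $a_n=\dim\bh[I]\geq\dim\bigl(\mu_{S,T}(\bh[S]\otimes\bh[T])\bigr)=\dim\bh[S]\otimes\bh[T]=a_ia_j$, using naturality to identify $\dim\bh[S]=a_i$ and $\dim\bh[T]=a_j$. This also explains why this section does not need characteristic zero: no antipode, no idempotents, and no Hopf-module freeness are invoked — only the bare compatibility axiom and connectedness.

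The main obstacle is purely bookkeeping: writing the Hopf compatibility axiom for species in a form that makes the triangularity transparent. The axiom says that $\Delta_{S,T}\circ\mu_{S,T}$ equals a sum over quadruple decompositions $S=S_1\djcup S_2$, $T=T_1\djcup T_2$ of $(\mu_{S_1,T_1}\otimes\mu_{S_2,T_2})\circ(\id\otimes\beta\otimes\id)\circ(\Delta_{S_1,S_2}\otimes\Delta_{T_1,T_2})$, where $\beta$ is the braiding. Isolating the term $S_1=S$, $S_2=\emptyset$, $T_1=\emptyset$, $T_2=T$ and invoking the fact that connectedness makes $\Delta_{S,\emptyset}$, $\Delta_{\emptyset,T}$, $\mu_{S,\emptyset}$, $\mu_{\emptyset,T}$ the canonical isomorphisms identifies that term with $\id_{\bh[S]\otimes\bh[T]}$; every other term factors through some $\bh[S_1]\otimes\bh[T_1]\otimes\bh[S_2]\otimes\bh[T_2]$ with $(S_1,T_2)\neq(S,T)$, hence contributes to a complementary summand of $\bigl(\bh\bdot\bh\bigr)[I]$ under the direct-sum decomposition indexed by subsets of $I$. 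I would phrase this as: the $(S,T)$-graded component of $\Delta\circ\mu_{S,T}$, relative to the decomposition $(\bh\bdot\bh)[I]=\bigoplus_{U\djcup V=I}\bh[U]\otimes\bh[V]$, is the identity — a clean one-line justification of injectivity that avoids any explicit computation.
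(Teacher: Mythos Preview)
Your approach is the same as the paper's: show that $\Delta_{S,T}\circ\mu_{S,T}$ is a left inverse to $\mu_{S,T}$, hence $\mu_{S,T}$ is injective and $a_n\geq a_ia_j$. The paper's proof is exactly this, in two sentences.

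There is one point worth correcting, not because it breaks your argument but because it obscures it. You write the compatibility expansion of $\Delta_{S,T}\circ\mu_{S,T}$ as a sum over quadruples $S=S_1\djcup S_2$, $T=T_1\djcup T_2$, isolate the diagonal term, and then worry about ``other terms'' landing in complementary summands. But there \emph{are} no other terms: for the map $\mu_{S_1,T_1}\otimes\mu_{S_2,T_2}$ to land in $\bh[S]\otimes\bh[T]$ you also need $S_1\djcup T_1=S$ and $S_2\djcup T_2=T$, and since $T_1\subseteq T$ is disjoint from $S$ this forces $T_1=\emptyset$, $S_1=S$, and symmetrically $S_2=\emptyset$, $T_2=T$. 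So the sum collapses to a single term, and connectedness makes that term literally the identity on $\bh[S]\otimes\bh[T]$. No triangularity is needed; the composite is not ``identity plus lower terms'' but exactly the identity. This set-level rigidity is precisely what distinguishes species from graded vector spaces (where the analogous composite $H_i\otimes H_j\to H_{i+j}\to H_i\otimes H_j$ need not be the identity, and indeed the inequality $a_n\geq a_ia_j$ fails for connected graded Hopf algebras, as the paper notes).
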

\begin{proof} 
Since $\bh$ is connected, the compatibility axiom for Hopf monoids (diagram (8.18) 
in~\cite[Section~8.3.1]{AguMah:2010}) implies that the composite
\[
\bh[S]\otimes\bh[T] \map{\mu_{S,T}} \bh[I] \map{\Delta_{S,T}} \bh[S]\otimes\bh[T]
\]
is the identity. The result follows by choosing any decomposition $I=S\sqcup T$ with
$\abs{I}=n$, $\abs{S}=i$, and $\abs{T}=j$.
\end{proof}

\begin{remark}
The second inequality in~\eqref{e:addcond} may be combined with~\eqref{e:aiaj}
to obtain
\[
a_3-a_2a_1 \geq 2a_1(a_2-a_1^2)\geq 0.
\]
Considerations of this type show that neither set of inequalities~\eqref{e:ord-exp},~\eqref{e:addcond} or~\eqref{e:aiaj} follows from the others.
\end{remark}

As a first consequence of Proposition~\ref{p:aiaj}, we derive a result on the growth
of the dimension sequence.

\begin{corollary}\label{c:expgrowth}
If $a_1\geq 1$, then the sequence $a_n$ is weakly increasing. If moreover there exists $k\geq 1$ such that 
\[
a_k\geq 2 \qand a_i\geq 1\ \forall\, i=0,\ldots,k-1,
\]
then $a_n=O(2^{n/k})$.
\end{corollary}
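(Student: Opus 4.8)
The plan is to deduce both assertions directly from the super-multiplicative inequality \eqref{e:aiaj} of Proposition~\ref{p:aiaj}, with only elementary bookkeeping. It is worth noting at the outset that \eqref{e:aiaj} bounds $a_n$ from \emph{below} by products over additive decompositions $n=i+j$; consequently every estimate it produces controls the sequence $(a_n)$ from below, and I read the growth assertion as recording the exponential order $2^{n/k}$, which I will establish in the sharp form $a_n\ge 2^{\lfloor n/k\rfloor}$.

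For the first assertion I would apply \eqref{e:aiaj} with $i=n$ and $j=1$: since $a_1\ge 1$, this gives $a_{n+1}\ge a_na_1\ge a_n$, so $(a_n)$ is weakly increasing. For the growth assertion, I would first prove by induction on $q\ge 0$ that $a_{qk}\ge a_k^{\,q}$, peeling off one summand equal to $k$ at each stage by applying \eqref{e:aiaj} with $(i,j)=(k,(q-1)k)$; the hypothesis $a_k\ge 2$ then yields $a_{qk}\ge 2^{\,q}$. For a general index I would use Euclidean division to write $n=qk+r$ with $0\le r<k$, so that $q=\lfloor n/k\rfloor$, apply \eqref{e:aiaj} once more with $(i,j)=(qk,r)$, and invoke the hypothesis $a_r\ge 1$ (available because $r<k$) to obtain
\[
a_n\ \ge\ a_{qk}\,a_r\ \ge\ 2^{\,q}\cdot 1\ =\ 2^{\lfloor n/k\rfloor}.
\]
Since $\lfloor n/k\rfloor>\tfrac{n}{k}-1$, this gives $a_n>\tfrac12\,2^{\,n/k}$, which is the claimed growth rate.

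I do not expect a genuine obstacle here; the whole argument is a one-sided Fekete-type estimate whose two inputs (the base case $a_k\ge 2$ and the remainder bound $a_r\ge 1$) are supplied verbatim by the hypotheses. The only point demanding care is precisely the one-sided nature of Proposition~\ref{p:aiaj}: because \eqref{e:aiaj} yields lower rather than upper bounds, the iteration pins down the growth of $(a_n)$ from below, and the floor introduced by the Euclidean division must be carried through to land on the exponent $\lfloor n/k\rfloor$ and hence on the order $2^{n/k}$ asserted in the statement.
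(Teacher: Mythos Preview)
Your proof is correct and follows essentially the same route as the paper: deduce monotonicity from $a_{n}\ge a_{n-1}a_1$, then write $n=qk+r$ with $0\le r<k$ and iterate \eqref{e:aiaj} to obtain $a_n\ge a_k^{\,q}a_r\ge 2^q$. Your observation that \eqref{e:aiaj} only yields lower bounds is exactly right; the paper's $O(2^{n/k})$ is indeed shorthand for the lower-bound growth rate you establish.
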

\begin{proof}
The first statement follows from $a_n\geq a_1a_{n-1}$. Now fix $k$ as in the second statement.
Given $n\geq k$, write $n=qk+r$ with $q\in\bN$ and $0\leq r\leq k-1$. 
From~\eqref{e:aiaj} we obtain
\[
a_n\geq a_k^q a_r\geq 2^q = 2^{-r/k}2^{n/k}.
\]
Thus $a_n=O(2^{n/k})$. 
\end{proof}

Define the \demph{support} of $\bh$ to be the support of its dimension sequence; namely, 
\[
\supp(\bh)=\{n\in\bN \mid a_n\neq 0\}.
\]
We turn to consequences of Proposition~\ref{p:aiaj} on the support.

\begin{corollary}\label{c:aiaj}
The set $\supp(\bh)$ is a submonoid of $(\bN,+)$.
\end{corollary}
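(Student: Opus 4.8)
The plan is to deduce Corollary~\ref{c:aiaj} directly from Proposition~\ref{p:aiaj}. Recall that a subset $M\subseteq(\bN,+)$ is a submonoid precisely when $0\in M$ and $M$ is closed under addition. Since $\bh$ is connected, $a_0=\dim\bh[\emptyset]=\dim\field=1\neq 0$, so $0\in\supp(\bh)$; this handles the identity element. For closure under addition, suppose $i,j\in\supp(\bh)$, so that $a_i\neq 0$ and $a_j\neq 0$. Setting $n=i+j$, Proposition~\ref{p:aiaj} gives $a_n\geq a_ia_j>0$ (here we use that the $a_m$ are nonnegative integers, and a product of two positive integers is positive), hence $n=i+j\in\supp(\bh)$. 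Therefore $\supp(\bh)$ is closed under $+$ and contains $0$, so it is a submonoid of $(\bN,+)$.

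That is the entire argument; there is no real obstacle, as the corollary is an immediate numerical consequence of the inequality~\eqref{e:aiaj}. The only point worth a sentence of care is the appeal to connectedness to get $0\in\supp(\bh)$ — without it the empty set could in principle be sent to $0$ — but this is built into the standing assumption of the section. One could add the observation that the submonoids of $(\bN,+)$ are exactly the numerical semigroups (together with $\{0\}$ and $\bN$ itself), which eventually are arithmetic progressions with common difference $\gcd$ of their generators; but that structural remark is not needed for the proof and may be deferred to the surrounding discussion.
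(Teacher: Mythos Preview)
Your proof is correct and matches the paper's own argument essentially line for line: the paper also invokes $a_0\neq 0$ (citing (co)unitality, with the parenthetical that connectedness in fact gives $a_0=1$) and then appeals to Proposition~\ref{p:aiaj} for closure under addition. Your final paragraph on numerical semigroups is extraneous but harmless.
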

\begin{proof}
By (co)unitality of $\bh$, $0\in \supp(\bh)$. (In fact, $a_0=1$ by connectedness.) By Proposition~\ref{p:aiaj}, the set $\supp(\bh)$ is
closed under addition. 
\end{proof}

We mention that, conversely, given any submonoid $S$ of $(\bN,+)$, there exists
a connected Hopf monoid $\bh$ such that $\supp(\bh)=S$.
Indeed, let $\bm\Pi_S[I]$ be the space spanned by the set of partitions of $I$ whose
block sizes belong to $S\setminus\{0\}$. Then $\bm\Pi_S$ is a quotient Hopf monoid of $\bm\Pi$ and $\supp(\bm\Pi_S)=S$.
(The former follows from the formulas in~\cite[Section~12.6.2]{AguMah:2010}; we employ the $h$-basis of $\bm\Pi$.)

\begin{example} Consider the special case of the previous paragraph in which $S$ is the
submonoid of even numbers. Then $\Pi_S$ is the species of set partitions into blocks of even size. In particular, $a_n=0$ for all odd $n$, so the dimension sequence
is neither increasing nor of exponential growth. This example shows that the
hypotheses of Corollary~\ref{c:expgrowth} cannot be removed.
\end{example}

\begin{corollary}\label{c:numerical}
The set $\supp(\bh)$ is either $\{0\}$ or infinite. 
The set $\bN\setminus\supp(\bh)$ is finite if and only if $\gcd\bigl(\supp(\bh)\bigr)=1$.
\end{corollary}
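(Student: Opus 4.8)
The plan is to deduce both statements from Corollary~\ref{c:aiaj}, which tells us that $\supp(\bh)$ is a submonoid of $(\bN,+)$; these are purely numerical facts about submonoids of $\bN$. First I would dispose of the trivial case: if $\supp(\bh)=\{0\}$ there is nothing to prove. So assume there is some $d\in\supp(\bh)$ with $d\geq 1$. Since $\supp(\bh)$ is closed under addition, it contains $d,2d,3d,\dots$, hence it is infinite. This proves the first sentence.

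For the second sentence, I would argue both implications. If $\gcd\bigl(\supp(\bh)\bigr)=g>1$, then every element of $\supp(\bh)$ is a multiple of $g$, so all numbers not divisible by $g$ lie in $\bN\setminus\supp(\bh)$, which is therefore infinite. Conversely, suppose $\gcd\bigl(\supp(\bh)\bigr)=1$. Then there are finitely many elements $s_1,\dots,s_r\in\supp(\bh)$ with $\gcd(s_1,\dots,s_r)=1$. By a standard fact (essentially the Chicken McNugget / numerical semigroup theorem, or simply B\'ezout followed by an adjustment argument), the submonoid they generate contains all sufficiently large integers; since $\supp(\bh)$ contains this submonoid, $\bN\setminus\supp(\bh)$ is finite.

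The one step that requires a small argument rather than a citation is the claim that a submonoid of $(\bN,+)$ with gcd $1$ is cofinite. The clean way is: choose $s_1,\dots,s_r\in\supp(\bh)$ with $\gcd=1$; by B\'ezout there are integers $m_i$ with $\sum m_i s_i=1$; separating positive and negative coefficients gives $a-b=1$ with $a,b\in\supp(\bh)$ (taking $b$ large enough by scaling, one may also assume $b\geq 1$). Then for any $n\geq b^2$, write $n=qb+t$ with $0\leq t<b$ and $q\geq b>t$; since $n=qb+t(a-b)=(q-t)b+ta$ with $q-t\geq 0$ and $ta\geq0$, we get $n\in\supp(\bh)$. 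Hence every $n\geq b^2$ lies in $\supp(\bh)$.

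I do not expect any real obstacle here; the content is entirely elementary number theory about numerical monoids, and the only choice to make is whether to cite the Frobenius/numerical-semigroup result or to include the two-line B\'ezout argument above. I would opt to include the short argument to keep the paper self-contained.
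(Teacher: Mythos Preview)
Your proposal is correct and takes the same approach as the paper: both deduce the corollary from the fact that $\supp(\bh)$ is a submonoid of $(\bN,+)$ (Corollary~\ref{c:aiaj}), reducing everything to elementary statements about numerical monoids. The only difference is that the paper simply cites a reference for the second statement, whereas you supply a short self-contained B\'ezout argument; either choice is fine.
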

\begin{proof}
These statements hold for all submonoids of $\bN$, hence for $\supp(\bh)$ by Corollary~\ref{c:aiaj}. For the second statement, see~\cite[Lemma~2.1]{RosGar:2009}.
\end{proof}

\begin{remark}
We comment on counterparts for  connected graded Hopf algebras of the results of this section. 

Consider the polynomial Hopf algebra $H=\field[x_1,\ldots,x_k]$, in which the generators $x_i$ are primitive and of degree $1$. The dimension sequence is $a_n=\binom{n+k-1}{k-1}$. In contrast to Corollary~\ref{c:expgrowth},
this sequence is polynomial even if $a_1>1$. It follows that Proposition~\ref{p:aiaj}
has no counterpart for connected graded Hopf algebras $H$, and that the multiplication map $H_i\otimes H_j \to H_{i+j}$ is not injective in general.

Corollaries~\ref{c:aiaj} and~\ref{c:numerical} fail for connected Hopf algebras over a field of positive characteristic.
In characteristic $p$, a counterexample is provided by $H=\field[x]/(x^p)$ with $x$ primitive and of degree $1$.

On the other hand, if the field characteristic is $0$, then the set $S=\{n\in\bN \mid H_n\neq 0\}$ is a submonoid of $(\bN,+)$.
This follows from the fact that in this case any connected Hopf algebra is a domain.
We expand on this point in the Appendix.


 \end{remark}

\section{Hopf kernels for cocommutative Hopf monoids}\label{s:kernels}

Hopf kernels enter in the decomposition of Theorem~\ref{t:maindual}
(and in dual form, in Theorem~\ref{t:main}). For cocommutative Hopf monoids,
Hopf kernels and Lie kernels are closely related, as discussed in this section. 
We provide a simple result that allows us to describe the Hopf kernel in certain
situations and we illustrate it with the case of the canonical morphism $\bl\onto\be$.

\subsection{Hopf and Lie kernels}
The species $\Pc(\bh)$ of \demph{primitive elements} of a connected Hopf monoid $\bh$ 
is defined by
\[
\Pc(\bh)[I] =
\{ x \in \bh[I] \mid \Delta(x) = 1 \otimes x + x \otimes 1\}
\]
for each nonempty finite set $I$, and $\Pc(\bh)[\emptyset]=0$.
Equivalently,
\[
\Pc(\bh)[I] = \bigcap_{\substack{S\sqcup T=I \\ S,T\neq\emptyset}} \ker\bigl(\Delta_{S,T}:\bh[I]\to \bh[S]\otimes\bh[T]\bigr).
\]
It is a Lie submonoid of $\bh$ under the commutator bracket.
See~\cite[Sections~8.10 and~11.9]{AguMah:2010} for more information on primitive elements. 

Let $\pi:\bh\to\bk$ be a morphism of connected Hopf monoids. It restricts to a morphism of Lie monoids
$\Pc(\bh)\to\Pc(\bk)$, which we still denote by $\pi$. We define the \demph{Lie kernel} of $\pi$ as the species
\[
\lker(\pi)=\ker\bigl(\pi:\Pc(\bh)\to\Pc(\bk)\bigr).
\]
It is a Lie ideal of $\Pc(\bh)$. The Hopf kernel $\hker(\pi)$ is defined in~\eqref{e:hker}.

\begin{lemma}\label{l:lker}
Let $\pi:\bh\to\bk$ be a morphism of connected Hopf monoids. Then  
\[
\lker(\pi)\subseteq\hker(\pi).
\]
\end{lemma}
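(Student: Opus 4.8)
The plan is to take an element $x \in \lker(\pi)[I]$, i.e.\ a primitive element of $\bh[I]$ killed by $\pi$, and check directly that it lies in the kernel of the composite $\bh \xrightarrow{\Delta} \bh\bdot\bh \xrightarrow{\pi_+\bdot\,\id} \bk_+\bdot\bh$ that defines $\hker(\pi)$. Since $x$ is primitive, $\Delta(x) = 1\otimes x + x\otimes 1$, where $1$ denotes the image of $1\in\field = \bh[\emptyset]$ in the appropriate one-dimensional summand of $(\bh\bdot\bh)[I]$. Applying $\pi_+\bdot\,\id$ and using that $\pi$ (hence $\pi_+$) is the canonical identification on the empty set, the term $x\otimes 1$ maps into $\bk_+[I]\otimes\bh[\emptyset]$ via $\pi_+(x)\otimes 1$, while the term $1\otimes x$ maps via $\pi_+(1)\otimes x$. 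Now $\pi_+(x) = 0$ because $x\in\lker(\pi)$ means $\pi(x)=0$ in $\bk[I]$, and $\pi_+(1) = 0$ because $1\in\bk[\emptyset]$ is sent to $0$ by the projection $\bk\onto\bk_+$ (by definition $\bk_+[\emptyset]=0$). Hence both terms vanish and $x\in\hker(\pi)[I]$.

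The key steps, in order, are: (1) unravel the definitions of $\lker(\pi)$, $\hker(\pi)$, and the coproduct $\Delta$ restricted to a single component $\bh[I]$, being careful about how the two ``extreme'' summands $\bh[\emptyset]\otimes\bh[I]$ and $\bh[I]\otimes\bh[\emptyset]$ sit inside $(\bh\bdot\bh)[I]$ and inside $(\bk_+\bdot\bh)[I]$; (2) observe that for primitive $x$, the full coproduct $\Delta(x)$ has components only in these two extreme summands (all middle components $\Delta_{S,T}(x)$ with $S,T\neq\emptyset$ vanish); (3) apply $\pi_+\bdot\,\id$ componentwise and note that on the summand $\bh[I]\otimes\bh[\emptyset]$ it acts as $\pi_+\otimes\id$, which kills $x\otimes 1$ since $\pi_+(x)=\pi(x)=0$; (4) on the summand $\bh[\emptyset]\otimes\bh[I]$ it acts as $\pi_+\otimes\id$ on the first factor, which is $0$ because $\bk_+[\emptyset]=0$, so $1\otimes x\mapsto 0$; (5) conclude the composite annihilates $x$, so $x\in\hker(\pi)$.

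I do not expect a serious obstacle here: the statement is essentially an unwinding of definitions, and the only thing to be mildly careful about is bookkeeping the one-dimensional extreme summands in the Cauchy product and the fact that the target is $\bk_+\bdot\bh$ rather than $\bk\bdot\bh$ (which is exactly why the $1\otimes x$ term dies). One could also phrase the whole argument more slickly: $\lker(\pi) = \Pc(\bh)\cap\ker(\pi)$, and on primitive elements the map $\pi_+\bdot\,\id$ composed with $\Delta$ reduces to the single arrow $x\mapsto \pi_+(x)\otimes 1$, so its kernel on $\Pc(\bh)$ is precisely $\ker(\pi_+|_{\Pc(\bh)}) = \ker(\pi|_{\Pc(\bh)}) = \lker(\pi)$; this shows not only the inclusion but that $\lker(\pi) = \Pc(\bh)\cap\hker(\pi)$, which may be worth recording if it is used later.
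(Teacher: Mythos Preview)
Your proof is correct and follows essentially the same approach as the paper: take a primitive $x$ killed by $\pi$, write $\Delta(x)=1\otimes x + x\otimes 1$, and observe that $(\pi_+\bdot\id)$ annihilates both terms since $\pi_+(1)=0$ and $\pi(x)=0$. The paper's version is simply more terse; your additional remark that in fact $\lker(\pi)=\Pc(\bh)\cap\hker(\pi)$ is a nice bonus not stated in the paper.
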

\begin{proof} Let $x\in\lker(\pi)$. Then
\[
(\pi_{+\!}\bdot\id)\Delta(x)=(\pi_{+\!}\bdot\id)(1\otimes x + x\otimes 1)=0,
\]
since $\pi_+(1)=0$ and $\pi(x)=0$. Thus $x\in\hker(\pi)$.
\end{proof}

\begin{lemma}\label{l:hker}
Let $\pi:\bh\to\bk$ be a morphism of connected Hopf monoids. Then  $\hker(\pi)$ is a submonoid of $\bh$.
\end{lemma}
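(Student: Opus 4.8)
The claim is that $\hker(\pi) = \ker\bigl(\bh \xrightarrow{\Delta} \bh\bdot\bh \xrightarrow{\pi_+\bdot\,\id} \bk_+\bdot\bh\bigr)$ is a submonoid of $\bh$, i.e.\ that it is closed under the product $\mu$ of $\bh$ (it automatically contains the unit, since $\hker(\pi)[\emptyset] = \bh[\emptyset]$ because $\Delta_{\emptyset,\emptyset}(1) = 1\otimes 1$ lands in the part killed by $\pi_+\bdot\id$, as $\pi_+(1)=0$). So the plan is to fix a decomposition $I = S\djcup T$ and show that if $x\in\hker(\pi)[S]$ and $y\in\hker(\pi)[T]$, then $\mu_{S,T}(x\otimes y)\in\hker(\pi)[I]$.

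The computation rests on the compatibility axiom for the Hopf monoid $\bh$, which expresses $\Delta_{A,B}\circ\mu_{S,T}$ on $\bh[S]\otimes\bh[T]$ as a sum over the ways the decompositions $I=S\djcup T$ and $I = A\djcup B$ interact, i.e.\ over the ``quadrants'' $S\cap A$, $S\cap B$, $T\cap A$, $T\cap B$. Concretely, $\Delta_{A,B}(xy)$ is built from $\Delta_{S\cap A,\, S\cap B}(x)$ and $\Delta_{T\cap A,\, T\cap B}(y)$, multiplied back together crosswise. Now I want to evaluate $(\pi_+\bdot\id)\Delta(xy)$, i.e.\ apply $\pi_+$ to the first tensor factor for every decomposition $I = A\djcup B$ with $A\neq\emptyset$. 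In each term, the first factor is a product (via $\mu$ in $\bh$) of the $A$-parts of $x$ and $y$; since $\pi$ is a morphism of monoids, $\pi_+$ of such a product can be rewritten using $\pi$ applied to each half. The key point is that whenever $A\cap S\neq\emptyset$, the corresponding piece already carries a factor of $\pi_+$ applied to a nontrivial-$S$-part of $x$, which vanishes because $x\in\hker(\pi)$ means exactly that $(\pi_+\bdot\id)\Delta(x)=0$; symmetrically for the part with $A\cap T\neq\emptyset$ using $y\in\hker(\pi)$. Since $A\neq\emptyset$ forces $A\cap S\neq\emptyset$ or $A\cap T\neq\emptyset$, every term dies, giving $(\pi_+\bdot\id)\Delta(xy)=0$, as desired.

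The cleaner way to organize this, which I would actually write, is to use cocommutativity-free bookkeeping: express $\hker(\pi)$ as $\bigcap_{I=S\djcup T,\,S\neq\emptyset}\ker\bigl((\pi_+)_S\otimes\id_T \circ \Delta_{S,T}\bigr)$, and show membership in each individual kernel. Even better, one can invoke the description of $\hker(\pi)$ as the equalizer/kernel of the two maps $\bh \to \bk\bdot\bh$ given by $(\pi\bdot\id)\Delta$ and the ``trivial coaction'' $h\mapsto 1\otimes h$ (followed by projection to $\bk_+\bdot\bh$); then $\hker(\pi)$ is the set of $h$ on which these agree modulo the connected part, and closure under $\mu$ follows because both maps are morphisms of monoids $\bh\to\bk\bdot\bh$ (the first because $\pi$ and $\Delta$ are, the second trivially), so their equalizer is a submonoid. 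I would phrase the proof this way to avoid spelling out the compatibility sum.

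The main obstacle is purely notational: keeping the two independent decompositions $I=S\djcup T$ and $I=A\djcup B$ straight through the compatibility axiom, and making sure the projection to $\bk_+$ (rather than all of $\bk$) is handled correctly at the empty-set boundary — this is why the ``equalizer of two monoid morphisms'' framing is worth setting up, since it sidesteps the indexed sum entirely. I expect no real mathematical difficulty beyond that: the result is a formal consequence of $\pi$ being a morphism of bimonoids together with the definition of $\hker$.
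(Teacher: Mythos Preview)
Your proposal is correct, and the ``equalizer of two monoid morphisms'' framing you settle on is exactly the paper's proof: the paper writes $\hker(\pi)=\Delta^{-1}\bigl(\eq(\pi\bdot\id,\, \iota\epsilon\bdot\id)\bigr)$, notes that $\pi$ and $\iota\epsilon$ are monoid morphisms $\bh\to\bk$ so the equalizer is a submonoid of $\bh\bdot\bh$, and then pulls back along the monoid morphism $\Delta$. Your ``trivial coaction'' $h\mapsto 1\otimes h$ is precisely $(\iota\epsilon\bdot\id)\Delta$, so the two formulations coincide.
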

\begin{proof} By definition, 
\[
\hker(\pi)=\Delta^{-1}\bigl(\eq(\pi\bdot\id,\, \iota\epsilon\bdot\id)\bigr),
\]
where $\iota:\tone\to\bk$ is the unit of $\bk$, 
$\epsilon:\bh\to\tone$ is the counit of $\bh$, and $\eq$ denotes the equalizer of two maps.
Since $\pi$ and $\iota\epsilon$ are morphisms of monoids $\bh\to\bk$,
the above equalizer is a submonoid of $\bh\bdot\bh$. Since $\Delta$ is a morphism of monoids, $\hker(\pi)$ is a submonoid of $\bh$.
\end{proof}

The following result provides the announced connection between Lie and Hopf kernels
for cocommutative Hopf monoids. It makes use of the Poincar\'e-Birkhoff-Witt and Cartier-Milnor-Moore theorems for species, which are discussed in~\cite[Section~11.9.3]{AguMah:2010}.

\begin{proposition}\label{p:kernels}
Let $\pi:\bh\to\bk$ be a surjective morphism of connected cocommutative Hopf monoids.
Then $\hker(\pi)$ is the submonoid of $\bh$ generated by $\lker(\pi)$.
\end{proposition}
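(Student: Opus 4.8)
The plan is to exploit the structure theory of connected cocommutative Hopf monoids in species—namely the Cartier–Milnor–Moore (CMM) and Poincar\'e–Birkhoff–Witt (PBW) theorems—to reduce the statement to a fact about (the universal enveloping monoid of) Lie monoids. By CMM, $\bh\cong\Uc\bigl(\Pc(\bh)\bigr)$ and $\bk\cong\Uc\bigl(\Pc(\bk)\bigr)$ as Hopf monoids, and the surjection $\pi:\bh\onto\bk$ corresponds to the surjection $\Uc(\mathfrak g)\onto\Uc(\mathfrak g')$ induced by the surjective Lie morphism $\pi:\Pc(\bh)=:\mathfrak g\onto\mathfrak g'=:\Pc(\bk)$ (surjectivity on primitives follows from surjectivity of $\pi$ together with cocommutativity). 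Write $\mathfrak n=\lker(\pi)=\ker(\mathfrak g\to\mathfrak g')$, a Lie ideal of $\mathfrak g$. Let $\bK$ denote the submonoid of $\bh$ generated by $\mathfrak n$; since $\mathfrak n$ is closed under the bracket, $\bK$ is precisely the image of $\Uc(\mathfrak n)\to\Uc(\mathfrak g)=\bh$, i.e. the sub-Hopf-monoid $\Uc(\mathfrak n)\subseteq\Uc(\mathfrak g)$.

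The two containments go as follows. For $\bK\subseteq\hker(\pi)$: by Lemma~\ref{l:hker}, $\hker(\pi)$ is a submonoid of $\bh$, and by Lemma~\ref{l:lker} it contains $\lker(\pi)=\mathfrak n$; since $\bK$ is the \emph{smallest} submonoid containing $\mathfrak n$, we get $\bK\subseteq\hker(\pi)$ immediately. The reverse containment $\hker(\pi)\subseteq\bK$ is the substantive direction. Here I would use PBW to pick a Lie-theoretic basis adapted to the filtration. Choose a splitting $\mathfrak g=\mathfrak n\oplus\mathfrak m$ of species with $\pi|_{\mathfrak m}:\mathfrak m\xrightarrow{\sim}\mathfrak g'$ (possible since we are over a field of characteristic $0$ and all species are finite dimensional). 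Then PBW for species gives
\[
\bh=\Uc(\mathfrak g)\;\cong\;\Uc(\mathfrak n)\bdot\Uc(\mathfrak m)\;=\;\bK\bdot\Uc(\mathfrak m)
\]
as left $\bK$-modules and as coalgebras, with the projection $\bh\onto\bk$ becoming (up to the isomorphism) the map $\bK\bdot\Uc(\mathfrak m)\onto\tone\bdot\Uc(\mathfrak m)\cong\Uc(\mathfrak g')=\bk$ that applies the counit of $\bK$ on the first factor. Concretely, $\bh$ has a PBW basis consisting of ordered monomials $u\cdot v$ with $u$ a monomial in a basis of $\mathfrak n$ and $v$ a monomial in a basis of $\mathfrak m$; applying $\pi$ kills all monomials with $u$ nontrivial and sends the rest bijectively to a PBW basis of $\bk$. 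Now take $x\in\hker(\pi)$. Using cocommutativity of $\bh$ and the grouplike/primitive decomposition one checks that $x\in\hker(\pi)$ forces $(\pi\bdot\id)\Delta(x)=1\otimes x$; equivalently, in the $\bK\bdot\Uc(\mathfrak m)$ picture $x$ lies in the subcomodule of $\bk$-coinvariants, which by the PBW decomposition is exactly $\bK\bdot\tone=\bK$. Hence $x\in\bK$, completing the proof.

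A slightly different but equivalent route, which may be cleaner to write, is to invoke Theorem~\ref{t:maindual} directly: it gives $\bh\cong\bk\bdot\hker(\pi)$ as left $\bk$-comodules, so $\hker(\pi)$ is literally the species of coinvariants, $\hker(\pi)[I]=\{x\in\bh[I]\mid(\pi_+\bdot\id)\Delta(x)=0\}$ (this is just restating the definition). Then one compares this with the PBW/CMM computation above, which identifies the coinvariants of $\bh$ under the $\bk$-coaction with $\Uc(\mathfrak n)=\bK$. Either way, the crux is the same: translate everything through CMM into enveloping monoids and then apply PBW to read off the coinvariants.

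The main obstacle I expect is the bookkeeping around PBW for Hopf monoids in species: one needs the precise statement that a splitting $\mathfrak g=\mathfrak n\oplus\mathfrak m$ of Lie monoids (with $\mathfrak n$ an ideal) yields $\Uc(\mathfrak g)\cong\Uc(\mathfrak n)\bdot\Uc(\mathfrak m)$ as left $\Uc(\mathfrak n)$-modules and right $\Uc(\mathfrak m)$-comodules, with the projection behaving as claimed, and one must make sure this is available in the form recorded in~\cite[Section~11.9.3]{AguMah:2010}. Granting that, the identification of the coinvariants and hence the equality $\hker(\pi)=\bK$ is essentially formal. The characteristic-$0$ and finite-dimensionality hypotheses enter exactly in securing CMM/PBW and the splitting.
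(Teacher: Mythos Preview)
Your proposal is correct in outline, but the paper takes a different and simpler path for the nontrivial inclusion. Both arguments share the easy direction $\bK\subseteq\hker(\pi)$ via Lemmas~\ref{l:lker} and~\ref{l:hker}. For the reverse inclusion, the paper does \emph{not} identify the $\bk$-coinvariants structurally via a PBW tensor decomposition; instead it compares exponential generating series. From Theorem~\ref{t:maindual} one has $\exponential{\hker(\pi)}{x}=\exponential{\bh}{x}/\exponential{\bk}{x}$, and from CMM/PBW (in the form $\bh\cong\Sc(\Pc(\bh))=\be\circ\Pc(\bh)$) one has $\exponential{\bh}{x}=\exp\bigl(\exponential{\Pc(\bh)}{x}\bigr)$ and similarly for $\bk$. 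Surjectivity of $\pi$ on primitives then gives $\exponential{\lker(\pi)}{x}=\exponential{\Pc(\bh)}{x}-\exponential{\Pc(\bk)}{x}$, whence $\exponential{\Uc(\lker(\pi))}{x}=\exp\bigl(\exponential{\lker(\pi)}{x}\bigr)=\exponential{\hker(\pi)}{x}$. Since $\bK=\Uc(\lker(\pi))\subseteq\hker(\pi)$ and the dimensions agree componentwise, equality follows.

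Your structural route is legitimate but carries exactly the burden you flag: the notation $\Uc(\mathfrak m)$ is abusive (the complement $\mathfrak m$ is only a subspecies, not a Lie submonoid), and the key claim---that under the PBW isomorphism the $\bk$-coaction on $\bh$ becomes $\id\bdot\Delta$ on $\bK\bdot\Sc(\mathfrak m)$, so that the coinvariants are precisely $\bK\bdot\tone$---is not a formal consequence of PBW as a coalgebra isomorphism and requires its own argument. The paper's dimension count sidesteps this entirely; what it buys is that one never needs to analyze the coaction on a chosen PBW basis, only to know that $\Uc$ and $\Sc$ agree as species. Your approach, if completed, yields a more explicit identification (and does not rely on finite dimensionality in the same way), but the paper's is shorter and avoids the bookkeeping you anticipate.
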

\begin{proof} Lemmas~\ref{l:lker} and~\ref{l:hker} imply one inclusion.
To conclude the equality, it suffices to check that the dimensions agree, or equivalently,
that the exponential generating series are the same. (We are assuming finite dimensionality throughout.)

First of all, from Theorem~\ref{t:maindual}, we have
\[
\exponential{\hker(\pi)}{x}=\exponential{\bh}{x}/\exponential{\bk}{x}.
\]
Now, since $\bh$ is cocommutative, we have
\[
\bh\cong \Uc\bigl(\Pc(\bh)\bigr) \cong \Sc\bigl(\Pc(\bh)\bigr) = \be\circ \Pc(\bh).
\]
The first is an isomorphism of Hopf monoids (the Cartier-Milnor-Moore theorem), the second is an isomorphism of comonoids (the Poincar\'e-Birkhoff-Witt theorem), and the third is the definition of the species underlying $\Sc\bigl(\Pc(\bh)\bigr)$~\cite[Section~11.3]{AguMah:2010}. It follows that
\[
\exponential{\bh}{x} = \exp\bigl(\exponential{\Pc(\bh)}{x}\bigr).
\]
For the same reason, 
\[
\exponential{\bk}{x} = \exp\bigl(\exponential{\Pc(\bk)}{x}\bigr),
\]
and therefore
\[
\exponential{\hker(\pi)}{x}=\exp\bigl(\exponential{\Pc(\bh)}{x} - \exponential{\Pc(\bk)}{x}\bigr).
\]

On the other hand, since the functors $\Uc$ and $\Pc$ define an adjoint equivalence, they preserve
surjectivity of maps. Thus, the induced map $\pi:\Pc(\bh)\to\Pc(\bk)$ is surjective,
and we have an exact sequence
\[
0\to \lker(\pi) \to \Pc(\bh) \to \Pc(\bk) \to 0.
\]
Hence, 
\[
\exponential{\lker(\pi)}{x}=\exponential{\Pc(\bh)}{x} - \exponential{\Pc(\bk)}{x}.
\]
Since $\lker(\pi)$ is a Lie submonoid of $\Pc(\bh)$, the submonoid of $\bh$ generated by $\lker(\pi)$ identifies with $\Uc\bigl(\lker(\pi)\bigr)$. Therefore, as above, the generating series for the latter submonoid is
\[
\exp\bigl(\exponential{\lker(\pi)}{x}\bigr)=
\exp\bigl(\exponential{\Pc(\bh)}{x} - \exponential{\Pc(\bk)}{x}\bigr)=\exponential{\hker(\pi)}{x},
\]
which is the desired equality.
\end{proof}

\begin{remark} The results of this section hold also for connected (not necessarily graded or finite dimensional) Hopf algebras.
See~\cite[Example 4.20]{BCM:1986} for a proof of Proposition~\ref{p:kernels} in this setting. The proof above used finite dimensionality of the (components of the) species, but this hypothesis is not necessary.
The proof in~\cite{BCM:1986} may be adapted to yield the result
for arbitrary species.
\end{remark}

\subsection{The Lie kernel of $\pi:\bl\onto\be$}\label{ss:cycle}

We return to the discussion in Example~\ref{eg:derangement}.
The primitive elements of the Hopf monoids $\be$ and $\bl$ are
described in~\cite[Example~11.44]{AguMah:2010}. We have that
\[
\Pc(\be)=\bX \qand \Pc(\bl)=\bLie
\]
where $\bX$ is the species of \demph{singletons},
\[
\bX[I] =
\begin{cases}
\field & \text{ if $\abs{I}=1$, } \\
   0      & \text{ otherwise,}
\end{cases}
\]
 and $\bLie$ is the species underlying the \demph{Lie operad}.
It follows that the Lie kernel of the canonical morphism $\pi:\bl\onto\be$ is given by
\begin{equation}\label{e:lker}
\lker(\pi)[I]=
\begin{cases}
\bLie[I] & \text{ if $\abs{I}\geq 2$, } \\
   0      & \text{ otherwise.}
\end{cases}
\end{equation}

Before moving on to the Hopf kernel of $\pi$, we provide some more information on the species $\bLie$. 

Let $I$ be a finite nonempty set and $n=\abs{I}$.
It is known that the space $\bLie[I]$ is of dimension $(n-1)!$. We proceed to describe a linear basis indexed by \demph{cyclic orders} on $I$. A cyclic order on $I$ is an equivalence class of linear orders on $I$ modulo the action 
\[
i_1\cmrg \cdots \cmrg i_{n-1} \cmrg i_n \ \mapsto \ i_n\cmrg i_1\cmrg \cdots \cmrg i_{n-1}
\]
of the cyclic group of order $n$. Each class has $n$ elements so there are $(n-1)!$ cyclic orders on $I$.
We use $(b,a,c)$ to denote the equivalence class of the linear order $b\cmrg a\cmrg c$.

We fix a finite nonempty set $I$ and choose a linear order $\ell_0$ on $I$, say
\[
\ell_0 = i_1\cmrg i_2 \cmrg \cdots \cmrg i_n.
\]
The basis of $\bLie[I]$ will depend on this choice.
Given a cyclic order $\gamma$ on $I$, let $S$ be the subset of $I$ consisting of
the elements encountered when traversing the cycle from $i_1$ to $i_2$ clockwise,
including $i_1$ but excluding $i_2$ (these are the first and second elements in $\ell_0$, respectively). Let $T$ consist of the remaining elements (from $i_2$ to $i_1$). Note that $i_1\in S$ and $i_2\in T$, so both $S$ and $T$ are nonempty.
The cyclic order $\gamma$ on $I$ induces cyclic orders on $S$ and $T$. We denote them by $\gamma|_S$ and $\gamma|_T$.  An element $p_\gamma\in \bl[I]$ is defined recursively by
\[
p_\gamma := [p_{\gamma|_S}, p_{\gamma|_T}]= p_{\gamma|_S}\cdot p_{\gamma|_T}-
p_{\gamma|_T}\cdot p_{\gamma|_S}.
\]
The elements $p_{\gamma|_S}\in\bl[S]$ and $p_{\gamma|_T}\in\bl[T]$ are themselves defined
with respect to the induced linear orders $(\ell_0)|_S$ and $(\ell_0)|_T$. The recursion starts with
the case when $I$ is a singleton $\{a\}$. In this case, we set
\[
p_{(a)}:=a\in \bl[a]
\]
(the unique linear order).

Clearly $a\in\bl[a]$ is a primitive element. Since the primitive elements are closed under commutators, we have $p_\gamma\in \bLie[I]$. Moreover, we have the following.

\begin{proposition}\label{p:liebase}
For fixed $I$ and $\ell_0$ as above, the set
\[
\bigl\{p_\gamma \mid \text{$\gamma$ is a cyclic order on $I$}\bigr\}
\]
is a linear basis of $\bLie[I]$.
\end{proposition}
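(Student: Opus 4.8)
The plan is to prove the $p_\gamma$ span $\bLie[I]$ and then conclude by a dimension count, since $\dim\bLie[I]=(n-1)!$ equals the number of cyclic orders on $I$. So the only real content is spanning, and for that I would proceed by induction on $n=\abs{I}$, mirroring the recursive definition of $p_\gamma$.

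The base case $n=1$ is immediate: $\bLie[\{a\}]$ is one-dimensional, spanned by $p_{(a)}=a$. For the inductive step, I would use the known structure of the free Lie algebra / Lie operad component: $\bLie[I]$ is spanned by (left-normed, say) bracketings of the elements of $I$, i.e.\ iterated commutators in $\bl[I]$. The key observation is that the recursion defining $p_\gamma$ produces, for each cyclic order $\gamma$, a bracket $[p_{\gamma|_S}, p_{\gamma|_T}]$ where the splitting $I=S\sqcup T$ is governed by the fixed order $\ell_0$ (cut the cycle at the arc from $i_1$ to $i_2$). I would show that as $\gamma$ ranges over all cyclic orders on $I$, the resulting elements are related to a spanning set of brackets by a \emph{unitriangular} change of basis. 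Concretely: expand $p_\gamma=p_{\gamma|_S}\cdot p_{\gamma|_T}-p_{\gamma|_T}\cdot p_{\gamma|_S}$ using the inductive description of $p_{\gamma|_S}$ and $p_{\gamma|_T}$ as signed sums of linear orders on $S$ and on $T$; the product in $\bl$ being concatenation, $p_\gamma$ becomes a signed sum of linear orders on $I$. I would identify a distinguished "leading" linear order $\ell_\gamma$ occurring in this expansion — for instance the one obtained by writing the cycle $\gamma$ starting at $i_1$ and reading clockwise, i.e.\ $\ell_\gamma = i_1\cmrg (\text{rest of }\gamma)$ — appearing with coefficient $\pm 1$, and such that the map $\gamma\mapsto\ell_\gamma$ is injective (indeed a bijection onto the set of linear orders on $I$ beginning with $i_1$, of which there are $(n-1)!$). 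Then the matrix expressing the $p_\gamma$ in terms of the $\ell_\gamma$'s (completed to a basis, or just within the relevant subspace) is triangular with $\pm1$ on the diagonal with respect to a suitable total order on cyclic orders (e.g.\ by the position in $\ell_0$ of the first place where $\ell_\gamma$ deviates from a chosen reference order), hence invertible over $\field$.

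The steps, in order: (1) state $\dim\bLie[I]=(n-1)!$ and count cyclic orders, reducing to spanning; (2) set up the induction on $n$, base case $n=1$; (3) for the step, expand $p_\gamma$ via the recursion into a signed $\field$-linear combination of linear orders on $I$; (4) isolate the leading term $\ell_\gamma$ with coefficient $\pm1$ and check $\gamma\mapsto\ell_\gamma$ is injective; (5) equip cyclic orders with a total order making the transition matrix unitriangular, conclude linear independence of the $p_\gamma$; (6) since there are $(n-1)!=\dim\bLie[I]$ of them and they are independent, they form a basis.

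The main obstacle will be step (4)–(5): making precise the notion of "leading term" so that distinct $\gamma$ have distinct leading linear orders \emph{and} the ordering on cyclic orders really does make the change of basis triangular. This requires carefully tracking, through the recursive splitting at the $\ell_0$-cut, which linear order survives "on top" and verifying that the recursive cuts for $\gamma|_S$ and $\gamma|_T$ interact compatibly with those for $\gamma$. An alternative to this bookkeeping, which I would keep in reserve, is to avoid leading terms entirely: show directly that the linear span of $\{p_\gamma\}$ is a $\bLie[I]$-submodule under the bracket that contains $p_{(a)}$-type generators and is closed under the operations building all of $\bLie[I]$, i.e.\ argue by a generation/dimension argument using that $\bLie$ is generated in arity $\le 2$ via the operad structure. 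But the triangularity argument is more elementary and self-contained, so that is the route I would write up.
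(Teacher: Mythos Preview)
Your approach is essentially correct, with one slip in the framing: you open by saying you will prove \emph{spanning} and then conclude by a dimension count, but the argument you actually outline in steps (4)--(6) establishes \emph{linear independence} via a leading-term triangularity analysis. Either direction suffices once you know $\dim\bLie[I]=(n-1)!$, but you should state it consistently. The triangularity you want does hold: reading $\gamma$ clockwise from $i_1$ yields $\ell_\gamma$, and an induction on $n$ shows that $\ell_\gamma$ is the lexicographically smallest (with respect to $\ell_0$) linear order appearing in the expansion of $p_\gamma$, with coefficient $+1$.

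The paper's proof takes a much shorter route: it simply observes that the construction of the $p_\gamma$ is a repackaging of the classical \emph{Lyndon basis} of a free Lie algebra, and cites the literature (Lothaire, Reutenauer). Reading $\gamma$ clockwise from its $\ell_0$-minimum produces a Lyndon word on the alphabet $I$ (each letter used exactly once), and the recursive splitting $I=S\sqcup T$ in the definition of $p_\gamma$ is precisely the standard factorization of that Lyndon word into its longest proper Lyndon suffix and the complementary prefix; so $p_\gamma$ is the standard bracketing of that word. In other words, you are reproving the Lyndon basis theorem from scratch by its usual triangularity argument. Your route is self-contained and elementary; the paper's is a one-line reduction to established theory. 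Recognizing the Lyndon connection would let you replace your steps (3)--(5) with a citation, and it also resolves your ``main obstacle'': the correct total order for triangularity is simply the lexicographic order on the $\ell_\gamma$.
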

\begin{proof}
The construction of the elements $p_\gamma$ is a reformulation of the familiar construction of the \emph{Lyndon}
basis of a free Lie algebra~\cite{Lot:1997,Reu:1993,Reu:2003}. Reading the elements of the cyclic order $\gamma$ clockwise starting
at the minimum of $\ell_0$ gives rise to a Lyndon word on $I$ (without repeated letters).
The cyclic orders $\gamma|_S$ and $\gamma|_T$ give rise to the Lyndon words in the canonical factorization of this Lyndon word. 
\end{proof}

For example, suppose that $I=\{a,b,c,d\}$, $\ell_0=a\cmrg b\cmrg c\cmrg d$ and $\gamma=(b,a,c,d)$. Then
\begin{align*}
p_{(b,a,c,d)} &= [p_{(a,c,d)}, p_{(b)}]=\bigl[ [p_{(a)}, p_{(c,d)}], p_{(b)}\bigr]
=\Bigl[ \bigl[ p_{(a)}, [p_{(c)}, p_{(d)}] \bigr], p_{(b)} \Bigr]
=\Bigl[ \bigl[ a, [c, d] \bigr], b \Bigr] \\
&=
a\cmrg c\cmrg d\cmrg b - a\cmrg d\cmrg c\cmrg b - c\cmrg d\cmrg a\cmrg b
+d\cmrg c\cmrg a\cmrg b
-b\cmrg a\cmrg c\cmrg d +b\cmrg a\cmrg d\cmrg c +b\cmrg c\cmrg d\cmrg a
-b\cmrg d\cmrg c\cmrg a.
\end{align*}

\begin{remark} 
The vector species $\bLie$ is \emph{not} the linearization of the set species of cycles.
Note also that, for a general bijection $\sigma:I\to J$, the $p$-basis of $\bLie[I]$
will not map to the $p$-basis of $\bLie[J]$ under $\bl[\sigma]$. 
\end{remark}

\subsection{The Hopf kernel of $\pi:\bl\onto\be$}\label{ss:derangement}

The above description~\eqref{e:lker} of the Lie kernel of $\pi:\bl\onto\be$ together with
Proposition~\ref{p:kernels} imply that the Hopf kernel of $\pi$ is given by
\[
\hker(\pi)[I]=\sum_{k\geq 1}\sum_{\substack{S_1\sqcup\cdots\sqcup S_k=I \\ \abs{S_r}\geq 2\,\,\forall r}}
\bLie[S_1]\cdots\bLie[S_k].
\]
An element in $\bLie[S_1]\cdots\bLie[S_k]$ is a $k$-fold product of primitive elements $x_r\in\bLie[S_r]$; each $S_r$ must have at least $2$ elements. We proceed to describe
a linear basis for $\hker(\pi)[I]$.

As in Section~\ref{ss:cycle}, we fix a linear order $\ell_0=i_1\cmrg i_2 \cmrg \cdots \cmrg i_n$ on $I$. The basis will be indexed by \demph{derangements} of $\ell_0$.
A derangement of $\ell_0$ is a linear
order $\ell=j_1\cmrg j_2 \cmrg \cdots \cmrg j_n$ on $I$ such that $i_r\neq j_r$ for all $r=1,\ldots,n$. 

View linear orders as bijections $[n]\to I$ and define $\sigma:=\ell\circ\ell_0^{-1}$.
Then $\sigma$ is a permutation of $I$ and $\ell$ is a derangement of $\ell_0$ precisely when $\sigma$ has no fixed points. 

Let $\ell$ be a derangement of $\ell_0$ and $\sigma$ the associated permutation. 
Let $S_1,\ldots,S_k$ be the orbits of $\sigma$ on $I$ labeled so that
\[
\min S_1<\cdots<\min S_k 
\text{ \ according to $\ell_0$,}
\]
and let $\gamma_r$ be the cyclic order on $S_r$ induced by $\sigma$. In other words,
$\sigma=\gamma_1\cdots\gamma_k$
is the factorization of $\sigma$ into cycles, ordered in this specific manner.

Employing the $p$-basis of $\bLie$ from Section~\ref{ss:cycle} (defined with respect to $\ell_0$ and the orders induced by $\ell_0$ on subsets of $I$), we define an element $p_{\ell}\in\bl[I]$ by
\[
p_{\ell} :=p_{\gamma_1}\cdots p_{\gamma_k}. 
\]
By assumption, $\abs{S_r}\geq 2$ for all $r$.
Hence $p_{\gamma_r}\in\lker(\pi)[S_r]$ and $p_{\ell}\in\hker(\pi)[I]$. 

For example, let $I=\{e,i,m,s,t\}$, $\ell_0 =s\cmrg m \cmrg i \cmrg t \cmrg e$ and $\ell = i \cmrg t \cmrg e \cmrg m \cmrg s$. Then 
\[
\sigma = (s,i,e)(m,t), \quad S_1 = \{i,e,s\},\quad S_2 = \{m,t\}, 
\]
and
\[
p_\ell = p_{(s,i,e)}p_{(m,t)} = \left[p_{(s)},p_{(i,e)}\right]p_{(m,t)} = \bigl[ s,[i,e]\bigr][m,t].
\]

\begin{proposition}\label{p:hopfbase}
For fixed $I$ and $\ell_0$ as above, the set
\[
\bigl\{p_\ell \mid \text{$\ell$ is a derangement of $\ell_0$}\bigr\}
\]
is a linear basis of $\hker(\pi)[I]$.
\end{proposition}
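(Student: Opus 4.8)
The plan is to show that the proposed set $\{p_\ell \mid \ell \text{ a derangement of } \ell_0\}$ is linearly independent and that its cardinality matches $\dim\hker(\pi)[I]$. Since $\hker(\pi)[I]$ already contains each $p_\ell$ (as noted just before the statement, $p_{\gamma_r}\in\lker(\pi)[S_r]$ and products of such elements lie in $\hker(\pi)[I]$), it then spans. The dimension count is immediate from Example~\ref{eg:derangement}: by Theorem~\ref{t:maindual} and Proposition~\ref{p:kernels} we have $\exponential{\hker(\pi)}{x} = \exponential{\bl}{x}/\exponential{\be}{x} = \exp(-x)/(1-x) = \sum_{n\ge0} (d_n/n!)x^n$, so $\dim\hker(\pi)[n] = d_n$, exactly the number of derangements of $\ell_0$ when $|I|=n$. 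So the crux is linear independence.

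For linear independence I would expand each $p_\ell$ in the natural basis of $\bl[I]$ (linear orders on $I$) and exhibit a triangularity. Recall $p_\ell = p_{\gamma_1}\cdots p_{\gamma_k}$ where $\sigma=\gamma_1\cdots\gamma_k$ is the cycle factorization of $\sigma=\ell\circ\ell_0^{-1}$ with cycles ordered by increasing $\ell_0$-minimum. Each $p_{\gamma_r}$ is, by the Lyndon-basis interpretation in Proposition~\ref{p:liebase}, a signed sum of linear orders on $S_r$ whose $\ell_0$-leading term is the Lyndon word obtained by reading $\gamma_r$ clockwise from $\min S_r$; moreover in a bracketing $[\dots[a,\dots]\dots]$ one always has a "straightening" term. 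The product $p_{\gamma_1}\cdots p_{\gamma_k}$ via concatenation then has, among its terms, the concatenation of these leading words. The key combinatorial claim is: the map sending the derangement $\ell$ to this distinguished linear order (read off the cycles of $\sigma$ in the prescribed order, each starting at its $\ell_0$-minimum) is injective, and with respect to a suitable total order on $\bl[I]$ the matrix expressing the $p_\ell$ in terms of linear orders is unitriangular. Injectivity of $\ell\mapsto\sigma$ is clear; injectivity of $\sigma\mapsto$ (its "fundamental-bijection" word, cycles sorted by minima, each rotated to start at its minimum) is the classical Foata-type transcription — one recovers the cycle structure because the cycle-minima are exactly the left-to-right records, being the positions where a new minimum appears. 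So the distinguished words are distinct, and if one further checks these are the unique $\ell_0$-maximal (or minimal) terms appearing across all the $p_\ell$, independence follows.

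The main obstacle I expect is the triangularity bookkeeping: controlling which linear orders appear in the expansion of a product of Lyndon brackets and verifying that the distinguished concatenated word is genuinely extremal and appears with coefficient $\pm1$ and in exactly one $p_\ell$. This is where the precise choice of monomial order on $\bl[I]$ matters, and where one leans on standard facts about the Lyndon/Lyndon-Shirshov basis — namely that a bracketed Lyndon word $[w]$ equals $w$ plus a sum of strictly larger words (in lexicographic order), and that concatenation of words respects the relevant order on prefixes. One must also confirm that the extra terms from \emph{different} derangements cannot conspire to produce the distinguished word of a \emph{given} one; this is handled by the Foata correspondence being a bijection between derangements of $\ell_0$ and fixed-point-free permutations of $I$, hence between derangements and their distinguished words.

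An alternative, cleaner route avoids explicit triangularity: one can argue structurally. Since $\bl$ is cocommutative, Proposition~\ref{p:kernels} gives $\hker(\pi) = \Uc(\lker(\pi))$ as a submonoid, and by the PBW theorem (used in the proof of Proposition~\ref{p:kernels}) $\Uc(\lker(\pi)) \cong \Sc(\lker(\pi)) = \be\circ\lker(\pi)$ as comonoids; concretely, a basis of $\Uc(\lker(\pi))[I]$ is obtained by picking, for each ordered set partition-type datum, products of basis elements of the Lie kernel on the blocks, where blocks are ordered by a fixed rule (their $\ell_0$-minima) and within each block one uses the $p_\gamma$-basis of $\bLie$ from Proposition~\ref{p:liebase}. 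Running through this PBW basis, the indexing data is exactly: a partition of $I$ into blocks of size $\ge2$, together with a cyclic order on each block — equivalently a fixed-point-free permutation $\sigma$ of $I$ — which is exactly the data of a derangement $\ell$ of $\ell_0$, and the resulting basis element is precisely $p_\ell$. I would present the proof primarily this way, citing~\cite[Section~11.9.3]{AguMah:2010} for the PBW basis of the universal enveloping monoid, and relegate the Foata-bijection remark to a sentence explaining why "partition into cyclically-ordered blocks of size $\ge2$" is counted by $d_n$ and indexed by derangements.
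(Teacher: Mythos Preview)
Your proposal is correct, and the ``alternative, cleaner route'' you settle on at the end is exactly the paper's approach: the paper's proof is a single sentence invoking Proposition~\ref{p:liebase} and the Poincar\'e--Birkhoff--Witt theorem. Your first route via explicit triangularity and the Foata correspondence is a reasonable sketch but unnecessary once PBW is available, and (as you anticipated) the order bookkeeping there would require care since the cycles are arranged by increasing $\ell_0$-minima rather than in the decreasing Lyndon order that makes the standard triangularity immediate.
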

\begin{proof}
This follows from Proposition~\ref{p:liebase} and the Poincar\'e-Birkhoff-Witt theorem.
\end{proof}

\begin{example} We describe the $p$-basis of $\hker(\pi)[I]$ in low cardinalities.
Throughout, we choose
\[
\ell_0=a\cmrg b \cmrg c \cmrg\cdots\,.
\]
The space $\hker(\pi)[a,b]$ is $1$-dimensional, linearly spanned by 
\[
p_{b\cmrg a} = p_{(a,b)} =[a,b].
\]
The space $\hker(\pi)[a,b,c]$ is $2$-dimensional, linearly spanned by 
\begin{align*}
p_{b\cmrg c\cmrg a} = p_{(a,b,c)} = [ p_{(a)},p_{(b,c)}] = \bigl[ a,[b,c]\bigr],\\
p_{c\cmrg a\cmrg b} = p_{(a,c,b)} = [p_{(a,c)},p_{(b)}] = \bigl[ [a,c], b\bigr].
\end{align*}
The space $\hker(\pi)[a,b,c,d]$ is $9$-dimensional. There are $6$ basis elements corresponding to $4$-cycles, such as
\[
p_{c\cmrg a\cmrg d\cmrg b} = p_{(a,c,d,b)} = \Bigl[ \bigl[ a, [c, d] \bigr], b \Bigr],
\]
and $3$ basis elements corresponding to products of two $2$-cycles, such as
\[
p_{b\cmrg a\cmrg d\cmrg c} = p_{(a,b)}p_{(c,d)} = [a,b]\cdot [c,d].
\]
\end{example}


\section*{Appendix}

The following fact was referred to in the last remark in Section~\ref{s:dimensions}.

\begin{proposition}\label{p:domain}
Let $H$ be a connected (not necessarily graded) Hopf algebra over a field of characteristic $0$.
Then $H$ is a domain.
\end{proposition}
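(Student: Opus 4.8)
The plan is to reduce to the graded case and then invoke a standard PBW-type argument. The key structural fact is that any connected Hopf algebra $H$ over a field of characteristic $0$ is filtered by its coradical filtration $H_{(0)}\subseteq H_{(1)}\subseteq\cdots$, which is an algebra filtration (the wedge construction behaves well with respect to multiplication because $H$ is a bialgebra), and the associated graded $\mathrm{gr}\,H=\bigoplus_{n\ge0}H_{(n)}/H_{(n-1)}$ is again a connected Hopf algebra, now graded. So the first step is: \emph{if $\mathrm{gr}\,H$ is a domain, then $H$ is a domain}, which is the elementary fact that a filtered ring whose associated graded is a domain is itself a domain (leading terms multiply).

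Next I would analyze $\mathrm{gr}\,H$. Since we are in characteristic $0$ and $H$ is connected, $\mathrm{gr}\,H$ is a connected \emph{graded} Hopf algebra, and the Milnor--Moore theorem applies to its commutative quotient, or more directly: the graded dual considerations aside, the cleanest route is the Cartier--Milnor--Moore / Leray theorem. Actually the sharpest statement I want is Leray's theorem: a connected graded commutative Hopf algebra over a field of characteristic $0$ is a free commutative (polynomial) algebra, hence a domain. To get there I would first pass to $(\mathrm{gr}\,H)_{\mathrm{ab}}$ — no; instead, the better approach is: $\mathrm{gr}\,H$ need not be commutative, so I should use that $\mathrm{gr}\,H$ is generated as an algebra in a controlled way and run the PBW argument. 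Concretely: the cocommutative case is handled by Cartier--Milnor--Moore ($\mathrm{gr}\,H\cong U(\mathfrak g)$ for a graded Lie algebra $\mathfrak g$, and $U(\mathfrak g)$ is a domain by PBW since $\mathfrak g$ has a totally ordered basis), and for the general connected graded case one uses that $H$ embeds appropriately, or one invokes the theorem (see Montgomery, or Zhuang's work on connected Hopf algebras) that every connected graded Hopf algebra in characteristic $0$ has a PBW-type basis making it a domain.

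So the step-by-step plan is: (1) equip $H$ with the coradical filtration and check it is an algebra filtration, so $\mathrm{gr}\,H$ is a connected graded Hopf algebra; (2) reduce the claim to showing $\mathrm{gr}\,H$ is a domain via the filtered-to-graded principle; (3) show $\mathrm{gr}\,H$ is a domain — in the cocommutative case this is $U(\mathfrak g)$ and follows from PBW, and in general one appeals to the structure theory of connected graded Hopf algebras in characteristic $0$ (every such Hopf algebra admits a PBW basis / is an iterated Ore-type extension, hence a domain). I expect the main obstacle to be step (3) in the non-cocommutative case: proving that an arbitrary connected \emph{graded} Hopf algebra over a characteristic-$0$ field is a domain is not entirely formal and really is where the characteristic-$0$ hypothesis bites (the counterexample $\Bbbk[x]/(x^p)$ in characteristic $p$ shows it must). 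If a self-contained argument is wanted, I would prove it by induction on the degree of the lowest-degree zero divisor using the coproduct: if $uv=0$ with $u,v$ homogeneous of minimal total degree, apply $\Delta$ and compare lowest terms $u\otimes 1 + \cdots$ against $1\otimes v + \cdots$, exploiting that $\Delta$ is an algebra map and that primitives generate enough of the structure in low degrees, to derive a contradiction — but I suspect the paper simply cites the known result (e.g.\ via Milnor--Moore or the literature on connected Hopf algebras), which is the pragmatic choice here.
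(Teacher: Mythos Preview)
Your steps (1) and (2) match the paper exactly: pass to the associated graded $K=\mathrm{gr}\,H$ with respect to the coradical filtration, then use the filtered-to-graded principle to reduce to showing $K$ is a domain.

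The divergence is at step (3), and it stems from a fact you explicitly considered and then dismissed. You wrote that ``$\mathrm{gr}\,H$ need not be commutative'' and therefore abandoned the Leray route in favor of heavier structure theory (PBW bases, iterated Ore extensions, Zhuang's results). The paper's key observation is precisely the opposite: when $H$ is connected, the associated graded of the coradical filtration \emph{is} commutative (this is cited from Aguiar--Sottile). Intuitively, for $x\in H_{(m)}$ and $y\in H_{(n)}$ the commutator $xy-yx$ is more primitive than $xy$, so it drops into $H_{(m+n-1)}$; connectedness is what makes the base of the induction start at $\field$. Once $K$ is known to be commutative, the paper finishes by embedding $K$ into a shuffle Hopf algebra via Radford's result, and the shuffle algebra is a free commutative algebra (this is essentially your Leray statement), hence a domain.

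So your outline is not wrong --- the structure-theoretic results you cite would indeed close the argument --- but it is more elaborate than needed. The paper's route is the one you talked yourself out of: commutativity of $\mathrm{gr}\,H$ plus the polynomial-algebra structure of connected graded commutative Hopf algebras in characteristic~$0$. What your approach buys is that it would survive in settings where commutativity of the associated graded is unavailable; what the paper's approach buys is a two-line proof once the commutativity fact is on the table.
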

This result is proven in~\cite[Lemma~1.8(a)]{WZZ:2011}, where it is attributed to Le Bruyn.
We provide a different proof here.
\begin{proof}
Let $K$ denote the associated graded Hopf algebra with respect to the coradical
filtration of $H$. Since $H$ is connected, $K$ is commutative~\cite[Remark~1.7]{AguSot:2005}. Now by~\cite[Proposition~1.2.3]{Rad:1979}, $K$ embeds in a shuffle Hopf algebra. The latter is a free commutative algebra~\cite[Corollary~3.1.2]{Rad:1979}, hence a
domain. It follows that $K$ and hence also $H$ are domains.
\end{proof}
 

Over a field of positive characteristic, the restricted universal enveloping algebra $\mathfrak u(\mathfrak g)$ of a finite dimensional nonzero Lie algebra $\mathfrak g$ 
is a connected Hopf algebra that is not a domain. Indeed, in this case $u(\mathfrak g)$ is finite dimensional~\cite[Theorem~V.12]{Jac:1962} and so has a nontrivial idempotent, being a Hopf algebra with integrals~\cite[Theorem~2.1.3]{Mon:1993}.


\bibliographystyle{plain}  
\bibliography{bibl}

\end{document}